\documentclass[11pt,a4paper]{amsart}

%%%%%%%%%%%%%%%%%%%%%%%%%
\usepackage{graphicx}
\usepackage{enumerate}
\usepackage{amsmath, amssymb, amsthm, amscd}
\usepackage{mathtools}
\usepackage[dvipsnames]{xcolor}
\usepackage{caption}
\usepackage{subcaption}
\usepackage{multirow}
\usepackage{extarrows}
\usepackage{stmaryrd}
\usepackage[pdfencoding=auto, psdextra, hidelinks]{hyperref}

%%%%%%%%%%%%%%%%%%%%%%%%%

%%% For line numbers
%\usepackage[pagewise]{lineno}\linenumbers

%%%%%%%%%%%%%%%%%%%%%%%%%%%
 \oddsidemargin  -0.04cm
 \evensidemargin -0.04cm
 \textwidth      16.59cm
 \textheight     21.94cm
%%%%%%%%%%%%%%%%%%%%%%%%%%%%

%%%%%%%%%%%%%%%%%%%%%%%%%%%%%%%%%%%%%%%%%%%%%%%%
\theoremstyle{plain}
\newtheorem{theorem}{Theorem}[section]
\newtheorem{lemma}[theorem]{Lemma}

\newtheorem{proposition}[theorem]{Proposition}
\newtheorem{hypothesis}[theorem]{Hypothesis}

\theoremstyle{definition}
\newtheorem{definition}[theorem]{Definition}

\theoremstyle{remark}
\newtheorem{remark}[theorem]{Remark}

%%%%%%%%%%%%%%%%%%%%%%%%%%%%%%%%%%%%%%%%%%%%%%%%

\numberwithin{equation}{section}

%    Blank box placeholder for figures (to avoid requiring any
%    particular graphics capabilities for printing this document).

%%%%%%%%%%%%%%%%%%%%%%%%%%%%%%%%%%%%%%%%%%%%%%%%%
\def\bold#1{\mbox{\boldmath $#1$}}
\newcommand{\uu}[1]{\bold{#1}}

%Autosizing Brackets%
\newcommand{\Left}{\left(}
\newcommand{\Right}{\right)}
\newcommand{\Langle}{\left\langle}
\newcommand{\Rangle}{\right\rangle}
\newcommand{\Lbrack}{\left\lbrack}
\newcommand{\Rbrack}{\right\rbrack}

%Spaces%
\newcommand{\R}{\mathbb{R}}
\newcommand{\N}{\mathbb{N}}
\newcommand{\Cinf}{C_c^\infty}
\newcommand{\tdom}{\Left 0,T\Right}
\newcommand{\odom}{\tdom\times\Omega}
\newcommand{\dom}{\Lbrack 0,T\Rbrack\times\Omega}

%Variables and other terms%
\newcommand{\bu}{{\uu{u}}}
\newcommand{\bv}{\uu{v}}
\newcommand{\absu}{\abs{\uu{u}}}
\newcommand{\bm}{{\uu{m}}}
\newcommand{\absm}{\abs{\uu{m}}}
\newcommand{\bU}{{\uu{U}}}

\newcommand{\du}{\delta\bu}
 %Young Measure%

\newcommand{\F}{\mathcal{F}} %Phase Space%
\newcommand{\D}{\mathcal{D}} %Dissipation Defect%
\newcommand{\M}{\mathcal{M}} %Radon Measures%
\newcommand{\K}{\mathcal{K}} %K-conv
\newcommand{\E}{\mathcal{E}}
\newcommand{\Pro}{\mathcal{P}} %Prob measures%
\newcommand{\veps}{\varepsilon}
\newcommand{\vrho}{\varrho}

\newcommand{\uphi}{\uu{\varphi}}

\newcommand{\half}{\frac{1}{2}}
\newcommand{\dx}{\mathrm{d}x}
\newcommand{\dt}{\mathrm{d}t}

\newcommand{\psig}{\psi_\gamma}
\newcommand{\V}{\mathcal{V}_{t,x}}
\newcommand{\vv}{\mathcal{V}}
\newcommand{\tvrho}{\tilde{\vrho}}
\newcommand{\tbm}{\widetilde{\bm}}
\newcommand{\X}{\mathcal{X}}
\newcommand{\m}{{(m)}}

%Discrete terms and operators%
\newcommand{\T}{\mathcal{T}}%primal mesh
\newcommand{\nuk}{\uu{\nu}_{\sigma,K}}%normal vector
\newcommand{\Lt}{\mathcal{L}_{\mathcal{T}}}%discrete function space
\newcommand{\Ld}{\mathcal{L}_{\mathcal{D}}}%discrete function space
\newcommand{\flx}{F_{\sigma,K}}%mass flux
\newcommand{\absk}{\abs{K}}%measure of K
\newcommand{\abssig}{\abs{\sigma}}%measure of sigma
\newcommand{\dsig}{D_{\sigma}}
\newcommand{\divup}{\mathrm{div}_{\T}^{up}}%upwind divergence
\newcommand{\divt}{\mathrm{div}_{\T}}%disc divergence
\newcommand{\gradt}{\nabla_{\T}}%disc grad
\newcommand{\gradd}{\nabla_{\D}}%dual grad
\newcommand{\rk}{\vrho_K}%density cell average
\newcommand{\uk}{\bu_K}%velocity cell average
\newcommand{\pk}{p_K}
\newcommand{\sk}{\sigma,K}
\newcommand{\vsk}{v_{\sk}}
\newcommand{\sink}{\sigma\in\E(K)}
\newcommand{\delt}{\delta t}

%Operators%
\newcommand{\norm}[1]{\left\lVert#1\right\rVert}
\newcommand{\abs}[1]{\left\lvert#1\right\rvert}
\newcommand{\weakstar}{\overset{\ast}{\rightharpoonup}}

\newcommand{\Div}{\mathrm{div}_x}
\newcommand{\grad}{\nabla_x}
\newcommand{\Dt}{\partial_t}

\title[Semi-implicit FV Scheme for DMV Solutions to the Euler System]{A Semi-implicit Finite Volume Scheme for Dissipative Measure-valued Solutions to the Barotropic Euler System} 

\author[Arun and Krishnamurthy]{K. R. Arun and Amogh Krishnamurthy}

\address{School of Mathematics, Indian Institute of Science Education
  and Research Thiruvananthapuram, Thiruvananthapuram 695551, India} 
\email{arun@iisertvm.ac.in}

\address{School of Mathematics, Indian Institute of Science Education
  and Research Thiruvananthapuram, Thiruvananthapuram 695551, India} 
\email{amoghk0720@iisertvm.ac.in}

\date{\today}

\subjclass{Primary 35L45, 35L60, 35L65, 35L67; Secondary 35D99, 35R06, 65M08}

\keywords{Compressible Euler System, Dissipative measure-valued solution, Young measure, Finite volume method, Entropy stability, Consistency, $\K$-convergence}

\thanks{K. R. Arun acknowledges the support from Science and Engineering Research Board, Department of Science \& Technology, Government of India through grant CRG/2021/004078.}

\begin{document}

\begin{abstract}
    A semi-implicit in time, entropy stable finite volume scheme for the compressible barotropic Euler system is designed and analyzed and its weak convergence to a dissipative measure-valued (DMV) solution 
    [E.\ Feireisl et al., Dissipative measure-valued solutions to the compressible Navier-Stokes system, Calc.\ Var.\ Partial Differential Equations, 2016] of the Euler system is shown. The entropy stability is achieved by introducing a shifted velocity in the convective fluxes of the mass and momentum balances, provided some CFL-like condition is satisfied to ensure stability. A consistency analysis is performed in the spirit of the Lax's equivalence theorem under some physically reasonable boundedness assumptions. The concept of $\K$-convergence [E.\ Feireisl et al., $\K$-convergence as a new tool in numerical analysis, IMA J.\ Numer.\ Anal., 2020] is used in order to obtain some strong convergence results, which are then illustrated via rigorous numerical case studies. The convergence of the scheme to a DMV solution, a weak solution and a strong solution of the Euler system using the weak-strong uniqueness principle and relative entropy are presented.
\end{abstract}

\maketitle

\section{Introduction}
\label{sec:intro}

Hyperbolic systems of conservation laws arise in the modelling of various physical phenomena where finite speeds of propagation and conservation of quantities are involved. The most notable among them is, invariably, the non-linear Euler system of gas dynamics which represents the basic conservation laws of mass, momentum and energy. A common trait of non-linear conservation laws is that discontinuities in the form of shock waves may develop in finite time, even if the given initial data was sufficiently regular. As a result, the concept of weak or distributional solutions to conservation laws was introduced as an appropriate replacement. But, it is well known that these weak solutions may not be unique. As conservation laws usually model the real world, a selection criterion in the form of a relevant entropy condition is usually enforced in order to rule out the non-physical solutions. The enforced entropy condition is most often a mathematical equivalent of the second law of thermodynamics. This approach has yielded successful results in the scalar case; see \cite{BCP00, BL00, Kru70}. However, in light of recent results by Chiodaroli et al.\ \cite{CLK15} and De Lellis et al.\ \cite{LS10}, the uniqueness of entropy weak solutions fails to hold in the multidimensional case for the barotropic Euler system, and the result is extended to the complete Euler system by Feireisl et al.\ \cite{FKK+20}. The ill-posedness of the Euler system seems to stem from the lack of compactness of entropy weak solutions, in the sense that even a bounded sequence of solutions may develop oscillations or concentrations or both. 

Inspired by the previously stated results, there was a renewed interest in the concept of measure-valued (MV) solutions, introduced by DiPerna \cite{DiP85} in the context of general conservation laws. This was further extended to incompressible fluid flows by DiPerna and Majda \cite{DM87} and by Neustupa \cite{Neu93} for the compressible barotropic Euler and Navier-Stokes systems. The main advantage of MV solutions is that they can be identified as cluster points or limits of oscillatory approximated sequences; see the monograph \cite{MNR+96}. The measure in a MV solution refers to the Young measures being used to describe the possible oscillations in the approximate sequence. In the present work, we focus on the so-called dissipative measure-valued (DMV) solutions, first introduced by Feireisl et al.\ \cite{FGS+16} for the compressible Navier-Stokes system and later on for the Euler system in \cite{BF18, FGJ19}. The class of DMV solutions can be seen as a natural closure in the weak topology of the solution set to the underlying system, with the dissipative solutions being the barycenters of the associated Young measure. The major interest in this approach is because the phenomenon of `weak-strong uniqueness' seems to hold in the class of DMV solutions, i.e.\ a DMV solution and a strong solution emanating from the same initial data seemingly coincide provided the latter exists. We refer the interested reader to \cite{BF18, FGJ19, FGS+16, FLM+21a, GSW15, Wei18} for an exhaustive survey of the same.

Due to the existence of discontinuities in weak solutions, numerically approximating solutions of general conservation laws is challenging. Conventional finite difference approximations are known to be inappropriate near discontinuities where the PDE does not hold. The finite volume methods which captures discontinuities with high resolution present an attractive, robust and cost effective alternative to classical finite difference schemes. Over the past few decades a great deal of progress has been made about the finite volume methods and today, a variety of these methods are available. We refer the reader to \cite{EGH00, GR21, Lev90, Tor97} for an extensive survey of numerical methods for general hyperbolic problems. The convergence of finite volume methods to weak solutions of the compressible Euler system has been studied extensively in \cite{HKL14, HL19, HLN18, HLS21, HLC20}. The consistency analysis, in the spirit of Lax-Wendroff, is carried out across all the above mentioned references under strong assumptions, which may fail to hold in general. However, in the framework of DMV solutions, provided some consistency and stability estimates are established, one can show that the numerical solutions generated by the finite volume method converge, albeit in the weak sense, to a DMV solution of the Euler system under some physically reasonable boundedness assumptions. The current state of the art literature pertaining to the convergence of finite volume methods to DMV solutions and in general, MV solutions, includes \cite{FLM20b, FLM+19, FLM+21a, FLM+21b, FLM20a, FKM+17, FMT12, FMT16} and the references therein. 

Due to the convergence of the numerical solutions being weak and the limit being a DMV solution, there are difficulties in effectively capturing the limit solution. A celebrated result by Koml\'{o}s \cite{Kom67} asserts that any sequence of uniformly bounded $L^1$-functions $\lbrace f_j\rbrace_{j\in\N}$ admits a subsequence $\lbrace f_{j_k}\rbrace_{k\in\N}$ such that the arithmetical averages $\frac{1}{N}\sum_{k=1}^N f_{j_k}$ converge pointwise a.e.\ to a function $f$ in $L^1$, with any further subsequence of $\lbrace f_{j_k}\rbrace_{k\in\N}$ also enjoying the same property. Balder in \cite{Bal00} adapted this result to a measure-valued framework and introduced the novel concept of $\K$-convergence for sequences of Young measures. Using this, one can show that the arithmetical averages of the numerical solutions converge strongly to the expectations (or barycenters) of the Young measure associated to the DMV limit; see \cite{ FLM+21a, FLM+21b, FLM20a} for more details.

\subsection{Aims and Scope of the Paper}
\label{subsec:aim}

The aim of the present paper is to design and analyze a semi-implicit in time, entropy stable finite volume method for the compressible barotropic Euler system and show its weak convergence to a DMV solution. The main motivation in choosing a semi-implicit setup is that there is a considerable reduction in the computational complexity when implementing the scheme, as it avoids the need to invert large and dense matrices which is a common occurrence in fully-implicit schemes. Further, the restrictive stability conditions that are typical of fully-explicit schemes are also relaxed. The entropy stability is achieved by the introduction of a shifted velocity in the convective fluxes of the mass and momentum balances, with the shifted velocity being proportional to the pressure gradient, provided some CFL-like condition is satisfied.
A consistency analysis is performed in the spirit of the Lax's equivalence theorem under some boundedness assumptions, which reflect the numerical solutions staying in a non-degenerate region. The limiting process is then carried out in which we show that the sequence of numerical solutions contains a subsequence that generates a Young measure, and the subsequence further converges to the expectations (or barycenters) of said Young measure in the weak-* sense. We then employ the techniques of $\K$-convergence to obtain the convergence of the Ces\`{a}ro averages of the numerical solutions to the expectations of the Young measure in any $L^q$. We also deduce the convergence of the $s$-Wasserstein distance, $W_s$, between the Ces\`{a}ro averages of the Dirac masses centered at the numerical solutions and the Young measure in any $L^q$, along with the convergence of the absolute first variance or the $L^1$-deviations of the numerical solutions, which reflects the probabilistic nature of the limit solution.
Said convergences are then verified via rigorous numerical case studies, where we exhibit the convergence of the scheme to a DMV solution, a strong solution and a weak solution of the Euler system using the tools of the weak-strong uniqueness principle and relative entropy. The convergence to a weak solution is illustrated using a very specific setup wherein we consider a Riemann problem in one space dimension with a parameterized pressure law. Then, we simulate the pressure going to 0 by taking decreasing values of the above parameter. The vanishing pressure causes the formation of a $\delta$-shockwave, which is mathematically represented by a Dirac measure, leading to the solution being interpreted as a distributional solution of the system. 

\subsection{Barotropic Euler System}
\label{subsec:baro-sys}
We consider the following initial value problem for the barotropic Euler system in $\odom$ which reads
\begin{subequations}
\begin{align}
 &\Dt\vrho+\Div\Left\vrho\bu\Right\,=\,0, \label{eqn:mss-bal-baro} \\
 &\Dt\Left\vrho\bu\Right+\Div\Left\vrho\bu\otimes\bu\Right+\grad p\,=\,0, \label{eqn:mom-bal-baro} \\
 &\vrho(0,\cdot)=\vrho_0\,,\,\bu(0,\cdot)=\bu_0,
\end{align}
\end{subequations}
where the variables $\vrho = \vrho(t,x)$ and $\bu = \bu(t,x)$ are the density and velocity of the fluid respectively.
Here, $p=p(\vrho)$ denotes the pressure and for the sake of simplicity we consider the isentropic pressure law 
\begin{equation}
\label{eqn:baro-pres}
p(\rho)=a\vrho^\gamma,
\end{equation}
with $a>0$ and $\gamma>1$ denoting the ratio of specific heats also known as the adiabatic constant. We also set 
\begin{equation}
\label{eqn:pres-pot}
\psig(\vrho)=\frac{a}{\gamma-1}\vrho^\gamma
\end{equation}
as the so-called pressure potential or the internal energy per unit volume.

Note that $\psig$ along with $p$ satisfies 
\begin{equation}
\label{eqn:pres-pot-de}
z\psig^{\prime}(z)-\psig(z)=p(z).
\end{equation}
In the following, we consider the domain $\Omega\subset\R^d$, $d=1, 2$ or $3$, to be bounded together with the space-periodic boundary conditions.

We recall from \cite{HLS21} the following energy estimates satisfied by regular solutions of \eqref{eqn:mss-bal-baro}-\eqref{eqn:mom-bal-baro}
\begin{proposition}[A priori energy estimates]
\label{prop:baro-sol-idt}
Regular solutions  of \eqref{eqn:mss-bal-baro}-\eqref{eqn:mom-bal-baro} satisfy 
\begin{enumerate}
\item a renormalization identity:
\begin{equation}
\label{eqn:renorm-idt}
\Dt\psig(\vrho)+\Div(\psig(\vrho)\bu)+p\,\Div(\bu)\,=\,0.
\end{equation}

\item a kinetic energy identity:
\begin{equation}
\label{eqn:ke-idt}
\Dt\biggl(\half\vrho\absu^2\biggr)+\Div\biggl(\half\vrho\absu^2\bu\biggr)+\grad p\cdot\bu\,=\,0.
\end{equation}

\item the total energy identity/entropy identity:
\begin{equation}
\label{eqn:enrg-idt}
\Dt E+\Div((E+p)\bu)\,=\,0,
\end{equation}
where $E=\displaystyle\half\vrho\absu^2+\psig(\vrho)$.
\end{enumerate}
\end{proposition}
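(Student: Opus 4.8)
The plan is to obtain the three identities one after another, each a consequence of the balance laws \eqref{eqn:mss-bal-baro}--\eqref{eqn:mom-bal-baro} together with the elementary calculus identities for smooth fields, namely the chain rule and the product rule $\Div(f\bv) = \grad f\cdot\bv + f\Div\bv$; the only structural input is the relation \eqref{eqn:pres-pot-de}. Since the solutions are assumed regular, every manipulation below is classical and there is, frankly, no serious obstacle here — the work is purely one of bookkeeping — so I will simply indicate the order of operations.

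First I would establish the renormalization identity \eqref{eqn:renorm-idt}. Rewrite \eqref{eqn:mss-bal-baro} in non-conservative form $\Dt\vrho + \bu\cdot\grad\vrho + \vrho\Div\bu = 0$ and multiply by $\psig^\prime(\vrho)$; the first two terms become $\Dt\psig(\vrho) + \bu\cdot\grad\psig(\vrho)$ by the chain rule, while the third becomes $\vrho\psig^\prime(\vrho)\Div\bu = (\psig(\vrho)+p)\Div\bu$ on account of \eqref{eqn:pres-pot-de}. Recombining $\bu\cdot\grad\psig(\vrho) + \psig(\vrho)\Div\bu$ into $\Div(\psig(\vrho)\bu)$ yields \eqref{eqn:renorm-idt}.

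Next, for the kinetic energy identity \eqref{eqn:ke-idt}, I would expand \eqref{eqn:mom-bal-baro}, cancel $\bu(\Dt\vrho+\Div(\vrho\bu))$ using \eqref{eqn:mss-bal-baro} to reach the non-conservative momentum equation $\vrho(\Dt\bu + \bu\cdot\grad\bu) + \grad p = 0$, and take the scalar product with $\bu$, noting $\bu\cdot\Dt\bu = \Dt(\half\absu^2)$ and $\bu\cdot(\bu\cdot\grad\bu) = \bu\cdot\grad(\half\absu^2)$. Adding $\half\absu^2$ times \eqref{eqn:mss-bal-baro} and collapsing $\vrho\bu\cdot\grad(\half\absu^2) + \half\absu^2\Div(\vrho\bu)$ into $\Div(\half\absu^2\vrho\bu)$ produces \eqref{eqn:ke-idt}. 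Finally, the total energy identity \eqref{eqn:enrg-idt} follows by adding \eqref{eqn:renorm-idt} and \eqref{eqn:ke-idt} and using $p\Div\bu + \bu\cdot\grad p = \Div(p\bu)$, with $E = \half\vrho\absu^2 + \psig(\vrho)$. If anything deserves to be flagged as the delicate point, it is only that \eqref{eqn:pres-pot-de} — precisely the ODE defining the pressure potential — is what makes the pressure work term in the first step fit together; once that is in hand, the power-law form \eqref{eqn:baro-pres} plays no further role.
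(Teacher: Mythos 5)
Your derivation is correct and is exactly the standard argument: the paper itself omits the proof (the proposition is recalled from the cited reference), and the canonical derivation there is precisely your sequence — renormalize the mass equation with $\psig'(\vrho)$ using \eqref{eqn:pres-pot-de}, pass to the non-conservative momentum equation and dot with $\bu$, then add the two identities. Nothing is missing.
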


\subsection{Admissible Weak Solutions}
\label{subsec:wk-soln}

The weak formulation of \eqref{eqn:mss-bal-baro}-\eqref{eqn:mom-bal-baro} written in the conservative variables, namely the density $\vrho$ and the momentum $\bm=\vrho\bu$, reads 
\begin{equation}
\label{eqn:mss-wk-baro}
\Lbrack\int_\Omega\vrho\varphi\,\dx\Rbrack_{t=0}^{t=\tau}\,=\,\int_0^\tau\int_\Omega\Lbrack\vrho\Dt\varphi+\bm\cdot\grad\varphi\Rbrack\,\dx\,\dt,
\end{equation}
for any $\tau\in\Lbrack 0,T\Rbrack$ and $\varphi\in\Cinf\bigl(\lbrack 0,T)\times\Omega\bigr)$.
\begin{equation}
\label{eqn:mom-wk-baro}
\Lbrack\int_\Omega\bm\cdot\uu{\varphi}\,\dx\Rbrack_{t=0}^{t=\tau}\,=\,\int_0^\tau\int_\Omega\Lbrack\bm\cdot\Dt\uu{\varphi}+\Left\frac{\bm\otimes\bm}{\vrho}\Right\colon\grad\uu{\varphi}+p(\vrho)\,\Div\uu{\varphi}\Rbrack\,\dx\,\dt,
\end{equation}
for any $\tau\in\Lbrack 0,T\Rbrack$ and $\uphi\in\Cinf\bigl(\lbrack 0,T)\times\Omega;\R^d\bigr)$.

In addition, a weak solution is termed admissible if it satisfies the following total energy inequality:
\begin{equation}
\label{eqn:baro-entropy}
\frac{\mathrm{d}}{\dt}\int_\Omega\Lbrack\half\frac{\absm^2}{\vrho}+\psig(\vrho)\Rbrack\,\dx\leq 0.
\end{equation}

\subsection{Dissipative Measure-Valued Solutions} 
\label{subsec:dmv-baro}

Dissipative measure-valued (DMV) solutions can be seen as a generalization to the concept of weak solutions of the barotropic Euler system. We follow the formulation proposed in \cite{FLM+21a} in order to recall the notion of a DMV solution. The interested reader can refer to alternative formulations presented in \cite{BF18, FGJ19, FLM20b, FLM20a} and the references therein.

The phase space for the barotropic Euler system is defined as 
\begin{equation}
\label{eqn:phs-spc-baro}
\F=\bigl\{\lbrack \tvrho,\tbm\,\rbrack\colon\tvrho \geq 0,\tbm\in\R^d\bigr\} \subset{\R^{d+1}},
\end{equation}
and we let $\Pro(\F)$ denote the space of all proability measures on $\F$.

\begin{definition}[Dissipative measure-valued solution]
\label{defn:dmv-baro}
A parameterized family of probability measures $\mathcal{V}=\lbrace\V\rbrace_{(t,x)\in\odom}\in L_{weak-*}^\infty\bigl(\odom,\Pro(\F)\bigr)$ is called a DMV solution of the barotropic Euler system with initial data $\lbrack \vrho_0,\bm_0\rbrack$ if the following hold:

\begin{itemize}
\item \textbf{Energy inequality} - the integral inequality 
\begin{equation}
\label{eqn:enrg-ineq-baro}
\int_\Omega\biggl\langle \V ; \half\frac{\abs{\tbm}^2}{\tvrho}+\psig(\tvrho)\biggr\rangle\,\dx\,+\,\int_\Omega\,\mathrm{d}\mathfrak{C}_{cd}(t)\leq\int_\Omega\biggl\lbrack\half\frac{\abs{\bm_0}^2}{\vrho_0}+\psig(\vrho_0)\biggr\rbrack\,\dx
\end{equation}
holds for almost all $t\in\lbrack 0,T\rbrack$ with the so-called energy concentration defect measure
\begin{equation}
\label{eqn:enrg-con-def}
\mathfrak{C}_{cd}\in L^\infty\bigl(0,T;\M^+(\overline{\Omega})\bigr),
\end{equation}
where $\M^+(\overline{\Omega})$ denotes the set of all non-negative Radon measures on $\overline{\Omega}$;

\item \textbf{Equation of continuity} - the map
$\lbrack (t,x)\mapsto\bigl\langle\V ; \tvrho\bigr\rangle\rbrack\in C_{weak}(\lbrack 0,T\rbrack; L^\gamma(\Omega))$ and 
$\bigl\langle\mathcal{V}_{0,x};\tvrho\bigr\rangle = \vrho_0(x)$ for a.e. $x\in\Omega$. Further, the following integral identity 
\begin{equation}
\label{eqn:dmv-mss-bal-baro}
\biggl\lbrack\int_\Omega\bigl\langle\V;\tvrho\bigr\rangle\varphi\,\dx\biggr\rbrack_{t=0}^{t=\tau}\,=\,\int_0^\tau\int_\Omega\biggl\lbrack\bigl\langle\V;\tvrho\bigr\rangle\Dt\varphi+\bigl\langle\V;\tbm\bigr\rangle\cdot\grad\varphi\biggr\rbrack\,\dx\,\dt
\end{equation}
holds for any $\tau\in\lbrack 0,T\rbrack$ and $\varphi\in\Cinf(\lbrack 0,T)\times\Omega)$;

\item \textbf{Momentum equation} - the map 
$\lbrack (t,x)\mapsto\bigl\langle\V ; \tbm\bigr\rangle\rbrack\in C_{weak}(\lbrack 0,T\rbrack; L^\frac{2\gamma}{\gamma+1}(\Omega;\R^d))$ and  $\bigl\langle\mathcal{V}_{0,x};\tbm\bigr\rangle = \bm_0(x)\text{ for a.e. }x\in\Omega\,$. Further, the integral identity
\begin{equation}
\label{eqn:dmv-mom-bal-bro}
\begin{split}
&\Lbrack\int_\Omega\Langle\V,\tbm\Rangle\cdot\uu{\varphi}\,\dx\Rbrack_{t=0}^{t=\tau}\,\\
&=\,\int_0^\tau\int_\Omega\Lbrack\Langle\V,\tbm\Rangle\cdot\Dt\uu{\varphi}+\Langle\V,\frac{\tbm\otimes\tbm}{\tvrho}\Rangle\colon\grad\uu{\varphi}+\Langle\V,p(\tvrho)\Rangle\Div\uu{\varphi}\Rbrack\,\dx\,\dt \\
&+\int_0^\tau\int_\Omega\grad\uu{\varphi}\colon\mathrm{d}\mathfrak{R}_{cd}(t)\,\dt
\end{split}
\end{equation}
holds for any $\tau\in\lbrack 0,T\rbrack$ and $ \uphi\in\Cinf(\lbrack 0,T)\times\Omega;\R^d)$ with the so-called Reynolds concentration defect measure
\begin{equation}
\label{eqn:reyn-conc-def}
\mathfrak{R}_{cd}\in L^\infty\bigl(0,T;\M^{+}\bigl(\overline{\Omega};\R^{d\times d}_{sym}\bigr)\bigr);
\end{equation}

\item \textbf{Defect Compatibility Condition} - there exist constants $0<\underline{d}\leq\overline{d}$ such that
\begin{equation}
\label{eqn:enrg-reyn-dmv-rel}
\underline{d}\mathfrak{C}_{cd}\leq\mathrm{tr}(\mathfrak{R}_{cd})\leq\overline{d}\mathfrak{C}_{cd}.
\end{equation}
\end{itemize}
\end{definition}

\begin{remark}
Note that every weak solution $\lbrack\vrho,\bm\rbrack$ of the barotropic Euler system is also a DMV solution with concentration defect measures $\mathfrak{R}_{cd} = \mathfrak{C}_{cd} = 0$ and $\V = \delta_{\lbrack\vrho(t,x),\textbf{\textit{m}}(t,x)\rbrack}$ for a.e. $(t,x)\in\odom$.
\end{remark}

\subsection{Weak-Strong Uniqueness Principle}
\label{subsec:wk-strng-uni}

Though DMV solutions are much weaker objects when compared to weak solutions, the strong solutions of the Euler system are stable in the class of DMV solutions, i.e.\ any DMV solution and strong solution of the Euler system that emanate from the same initial data necessarily coincide, provided the latter exists. This fundamental property is labelled as the `weak-strong uniqueness principle' in literature; see  \cite{BF18, FGJ19, FGS+16, FLM+21a, GSW15, Wei18} for more details. The key tool in proving this principle is the so-called relative entropy functional, which is defined as 
\begin{equation}
\label{eqn:rel-ent}
E_{rel}(\vrho, \bm \,\vert\, r,\bU) = \half \vrho\abs{\frac{\bm}{\vrho} - \bU}^2 +\psig(\vrho) - \psig(r) - \psig^{\prime}(r)(\vrho - r),
\end{equation}
where $r,\bU$ are test functions mimicking a strong solution of the barotropic Euler system.

\begin{remark}
\label{rem:rel-ent}
Though \eqref{eqn:rel-ent} is not a metric as it is not symmetric, the following still hold:
\[
E_{rel}(\vrho, \bm \,\vert\, r,\bU)\geq 0\text{ and }E_{rel}(\vrho, \bm \,\vert\, r,\bU) = 0 \iff \vrho = r\,,\,\bm=r\bU.
\]
\end{remark}
We now recall the statement of the theorem following \cite{FLM+21a}.
\begin{theorem}[Weak-Strong Uniqueness Principle]
\label{thm:wk-str-uniq}
    Let a parameterized family of probability measures $\mathcal{V}=\bigl\{\V\bigr\}_{(t,x)\in\odom}$ be a DMV solution of the barotropic Euler system in the sense of Definition \ref{defn:dmv-baro}, with initial data $\lbrack\vrho_0,\bm_0\rbrack$ and  associated concentration defect measures $\mathfrak{C}_{cd}$ and $\mathfrak{R}_{cd}$.

    Let $r,\bU$ be a strong solution of the barotropic Euler system belonging to the class 
    \begin{equation}
    \label{eqn:baro-wk-strg-fun-cls}
        \begin{split}
            &r\in W^{1,\infty}(\odom),\,\inf_{(t,x)\in\odom}r(t,x)>0, \\
            &\bU\in W^{1,\infty}(\odom;\R^d),
        \end{split}
    \end{equation}
    such that $r(0,\cdot)=\vrho_0$ and $r(0,\cdot)\bU(0,\cdot)=\bm_0$.

    Then
    \[
        \V=\delta_{\lbrack r(t,x),r\textbf{\textit{U}}(t,x)\rbrack}\text{ for a.e. }(t,x)\in\odom,
    \]
    and
    \[
        \mathfrak{C}_{cd}=\mathfrak{R}_{cd}=0.
    \]
\end{theorem}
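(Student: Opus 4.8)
The plan is to use the relative entropy (or relative energy) functional $E_{rel}$ defined in \eqref{eqn:rel-ent} as a Lyapunov-type quantity, and to derive a Gr\"onwall inequality for
\[
\mathcal{E}(t)=\int_\Omega\Bigl\langle\V;E_{rel}(\tvrho,\tbm\,\vert\,r(t,x),\bU(t,x))\Bigr\rangle\,\dx+\int_\Omega\,\mathrm{d}\mathfrak{C}_{cd}(t).
\]
By Remark \ref{rem:rel-ent} and the defect compatibility condition \eqref{eqn:enrg-reyn-dmv-rel}, $\mathcal{E}(t)\ge 0$, and $\mathcal{E}(t)=0$ forces $\V=\delta_{[r,r\bU]}$ a.e.\ and $\mathfrak{C}_{cd}=0$ (hence $\mathfrak{R}_{cd}=0$ by \eqref{eqn:enrg-reyn-dmv-rel}). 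Since $r(0,\cdot)=\vrho_0$, $r(0,\cdot)\bU(0,\cdot)=\bm_0$ and the initial Young measure has the correct barycenters, one checks $\mathcal{E}(0)=0$; the goal then reduces to showing $\mathcal{E}(t)\le C\int_0^t\mathcal{E}(s)\,\mathrm{d}s$ for a.e.\ $t$, with $C$ depending only on the $W^{1,\infty}$-norms of $r,\bU$ and on $\inf r>0$.

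The key step is to expand $\mathcal{E}(t)$ and estimate each term. Write $E_{rel}=\bigl(\tfrac12\tfrac{|\tbm|^2}{\tvrho}+\psig(\tvrho)\bigr)-\psig'(r)(\tvrho-r)-\psig(r)-\tbm\cdot\bU+\tfrac12\tvrho|\bU|^2$ after expanding the square. The first parenthesis, integrated against $\V$ and added to $\int\mathrm{d}\mathfrak{C}_{cd}$, is controlled from above by the energy inequality \eqref{eqn:enrg-ineq-baro} by its initial value. The remaining terms involve only the barycenters $\langle\V;\tvrho\rangle$, $\langle\V;\tbm\rangle$ and the smooth functions $r,\bU$, and their time evolution is governed by the continuity identity \eqref{eqn:dmv-mss-bal-baro} and the momentum identity \eqref{eqn:dmv-mom-bal-bro} — crucially, $r,\bU$ being Lipschitz in $(t,x)$ are admissible (after a standard mollification/density argument) as test functions $\varphi=\psig'(r)-\tfrac12|\bU|^2$ and $\uphi=\bU$. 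Differentiating in $\tau$, using that $(r,\bU)$ solves the Euler system classically to cancel the leading-order terms, and collecting the rest, one arrives (as in \cite{FLM+21a, FGS+16}) at
\[
\mathcal{E}(\tau)\le\mathcal{E}(0)+\int_0^\tau\!\!\int_\Omega\Bigl\langle\V;\,\mathfrak{r}(\tvrho,\tbm,r,\bU)\Bigr\rangle\,\dx\,\dt+\int_0^\tau\!\!\int_\Omega\grad\bU:\mathrm{d}\mathfrak{R}_{cd}(t)\,\dt,
\]
where the integrand $\mathfrak{r}$ is pointwise bounded by $C\,E_{rel}$ using Taylor expansion of $\psig$ around $r$ (here $\gamma>1$ and $\inf r>0$ enter) together with convexity estimates, and the defect term is bounded by $\|\grad\bU\|_{L^\infty}\int\mathrm{d}\,\mathrm{tr}(\mathfrak{R}_{cd})\le C\int\mathrm{d}\mathfrak{C}_{cd}$ via \eqref{eqn:enrg-reyn-dmv-rel}. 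Both contributions are $\le C\mathcal{E}(t)$, so Gr\"onwall gives $\mathcal{E}\equiv 0$.

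The main obstacle I anticipate is the rigorous justification of the relative-energy \emph{inequality} itself: the functional $E_{rel}$ is not directly the quantity appearing in \eqref{eqn:enrg-ineq-baro}, so one must carefully combine the (only almost-everywhere valid, and only an inequality) energy bound with the (equality, $C_{weak}$-in-time) continuity and momentum identities, making sure the non-smooth-in-time nature of the DMV solution and the mere Lipschitz regularity of $(r,\bU)$ are handled by appropriate mollification in time and density arguments — and checking that all the integrability exponents ($L^\gamma$ for $\tvrho$, $L^{2\gamma/(\gamma+1)}$ for $\tbm$, as in Definition \ref{defn:dmv-baro}) are consistent so that every product integrated is well-defined. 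Once the relative-energy inequality is in place, the Gr\"onwall argument and the concluding step are routine. Since this is essentially \cite[Theorem~3.3]{FLM+21a} transplanted to our setting, I would state the result and refer to that reference for the detailed computation, indicating only the modifications needed.
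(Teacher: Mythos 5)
Your proposal correctly sketches the standard relative-entropy/Gr\"onwall argument and then defers to \cite[Theorem~3.3]{FLM+21a}, which is exactly what the paper does: Theorem \ref{thm:wk-str-uniq} is only recalled from \cite{FLM+21a}, with no proof given here. The outline you provide (testing \eqref{eqn:dmv-mss-bal-baro} and \eqref{eqn:dmv-mom-bal-bro} with $\psig'(r)-\tfrac12\abs{\bU}^2$ and $\bU$, combining with \eqref{eqn:enrg-ineq-baro}, and absorbing the Reynolds defect via \eqref{eqn:enrg-reyn-dmv-rel}) is the argument used in that reference, so no discrepancy to report.
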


\subsection{Velocity Stabilization}
\label{subsec:vel-stab}

As mentioned earlier, the motivation for this work is to introduce a finite volume scheme to approximate the barotropic Euler system and to study its stability in the sense of energy stability, i.e.\ to obtain a discrete equivalent of \eqref{eqn:enrg-idt}. In order to enforce the stability at a numerical level, we adopt the formalism introduced in \cite{CDV17,DVB17,DVB20,GVV13,PV16} wherein a stabilization term in the form of a shifted velocity is introduced in the mass and momentum balances to yield the following modified system:
\begin{subequations}
\begin{align}
    &\Dt\vrho+\Div(\vrho(\bu-\du))=0 \label{eqn:mod-mss-bal-baro},\\
    &\Dt(\vrho\bu)+\Div(\vrho\bu\otimes(\bu-\du))+\grad p=0. \label{eqn:mod-mom-bal-baro}
\end{align}
\end{subequations}

Analogous to Proposition \ref{prop:baro-sol-idt}, we can show that the solutions to the modified system satisfy the following a priori energy estimates. 
\begin{proposition}[A priori energy estimates for the modified system]
    \label{prop:mod-enrg-est}
    Regular solutions of \eqref{eqn:mod-mss-bal-baro}-\eqref{eqn:mod-mom-bal-baro} satisfy
    \begin{enumerate}
        \item a renormalization identity:
        \begin{equation}
            \label{eqn:mod-renorm-idt}
            \Dt\psig(\vrho)+\Div(\psig(\vrho)(\bu-\du))+p\,\Div(\bu-\du)\,=\,0;
        \end{equation}

        \item a kinetic energy identity: 
        \begin{equation}
            \label{eqn:mod-ke-idt} \Dt\biggl(\half\vrho\absu^2\biggr)+\Div\biggl(\half\vrho\absu^2(\bu-\du)\biggr)+\grad p\cdot(\bu-\du)\,=\,-\grad p\cdot\du;
        \end{equation}
        
        \item the entropy balance:
        \begin{equation}
        \label{eqn:mod-ent-idt}
        \Dt E+\Div((E+p)(\bu-\du))\,=\,-\grad p\cdot\du.
        \end{equation}
    \end{enumerate}
\end{proposition}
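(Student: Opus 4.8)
The plan is to mimic the proof of Proposition \ref{prop:baro-sol-idt} almost verbatim; the only genuinely new feature is keeping track of the extra terms generated by the shift $\du$. Throughout I write $\bv=\bu-\du$, so that the modified balances read $\Dt\vrho+\Div(\vrho\bv)=0$ and $\Dt(\vrho\bu)+\Div(\vrho\bu\otimes\bv)+\grad p=0$.

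First I would derive the renormalization identity \eqref{eqn:mod-renorm-idt} by multiplying the modified mass balance by $\psig^{\prime}(\vrho)$. Since $\psig^{\prime}(\vrho)\Dt\vrho=\Dt\psig(\vrho)$ and $\psig^{\prime}(\vrho)\Div(\vrho\bv)=\bv\cdot\grad\psig(\vrho)+\vrho\psig^{\prime}(\vrho)\Div\bv$, the constitutive relation \eqref{eqn:pres-pot-de} written as $\vrho\psig^{\prime}(\vrho)=\psig(\vrho)+p$ converts this into $\Dt\psig(\vrho)+\Div(\psig(\vrho)\bv)+p\,\Div\bv=0$, which is precisely \eqref{eqn:mod-renorm-idt}. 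This step is identical to the unmodified case because the shift only replaces the advecting field $\bu$ by $\bv$ and produces no source.

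Next I would establish the kinetic energy identity \eqref{eqn:mod-ke-idt} by taking the scalar product of the modified momentum balance with $\bu$. Using $\bu\cdot\Dt(\vrho\bu)=\Dt\bigl(\half\vrho\absu^2\bigr)+\half\absu^2\Dt\vrho$ and, for the convective term, the product rule $\bu\cdot\Div(\vrho\bu\otimes\bv)=\half\absu^2\Div(\vrho\bv)+\Div\bigl(\half\vrho\absu^2\bv\bigr)$, the two contributions proportional to $\half\absu^2$ combine into $\half\absu^2\bigl(\Dt\vrho+\Div(\vrho\bv)\bigr)$, which vanishes by the modified mass balance. One is left with $\Dt\bigl(\half\vrho\absu^2\bigr)+\Div\bigl(\half\vrho\absu^2\bv\bigr)+\bu\cdot\grad p=0$; splitting $\bu\cdot\grad p=\bv\cdot\grad p+\du\cdot\grad p=\grad p\cdot(\bu-\du)+\grad p\cdot\du$ and moving the last term to the right-hand side yields \eqref{eqn:mod-ke-idt}. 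Finally, adding \eqref{eqn:mod-renorm-idt} and \eqref{eqn:mod-ke-idt} and using $p\,\Div\bv+\grad p\cdot\bv=\Div(p\bv)$ gives the entropy balance \eqref{eqn:mod-ent-idt} with $E=\half\vrho\absu^2+\psig(\vrho)$.

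The whole computation is elementary for regular solutions, so I do not anticipate a real obstacle. The only point requiring a little care is the convective term $\Div(\vrho\bu\otimes\bv)$, where one must expand by the product rule and then re-absorb a full divergence, together with the bookkeeping observation that the source $-\grad p\cdot\du$ appearing in \eqref{eqn:mod-ke-idt} and \eqref{eqn:mod-ent-idt} is exactly the remainder obtained after the shift $\du$ is isolated from $\bu\cdot\grad p$. Since the identities are asserted only for regular solutions, no distributional or renormalization-theoretic subtleties enter, and the argument reduces to careful algebra.
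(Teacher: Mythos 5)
Your computation is correct and is exactly the routine verification the paper has in mind when it says the estimates follow ``analogous to Proposition \ref{prop:baro-sol-idt}'': multiply the mass balance by $\psig^{\prime}(\vrho)$ and use \eqref{eqn:pres-pot-de}, dot the momentum balance with $\bu$ and cancel $\half\absu^2$ times the mass balance, then add and absorb $p\,\Div\bv+\grad p\cdot\bv$ into $\Div(p\bv)$. Nothing is missing; the bookkeeping of the source term $-\grad p\cdot\du$ is handled correctly.
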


Therefore, at least at the continuous level, we can see that formally setting $\du\,=\,\eta\grad p\,,\,\eta>0$, gives us
\begin{equation}
    \label{eqn:mod-ent-ineq}
    \Dt E+\Div((E+p)(\bu-\du))\,=\,-\eta\abs{\grad p}^2\leq 0.
\end{equation}
We thus observe that introducing a shift in the velocity indeed has a stabilizing effect, allowing us to obtain the desired entropy inequality.

\subsection{The Finite Volume Method}
\label{subsec:fv-mthd}

We wish to numerically approximate the velocity stabilized Euler system \eqref{eqn:mod-mss-bal-baro}-\eqref{eqn:mod-mom-bal-baro} and to this end, we start with a space domain $\Omega$ such that the closure of $\Omega$ is a union of closed rectangles ($d=2$) or closed parallelepipeds ($d=3$).

\subsubsection{Mesh and Unknowns}
\label{ssubsec:mesh-unkn}
We introduce a tessellation $\T$ of $\Omega\subset\R^d$, known as the primal mesh, consisting of possibly non-uniform closed rectangles ($d=2$) or closed parallelepipeds ($d=3)$ such that $\overline{\Omega}=\cup_{K\in\T}K$, where $K$ is called a control volume. The $d$-dimensional Lebesgue measure of $K$ will be denoted by $\absk$. The set of all edges ($d=2$) or faces ($d=3$) (hereafter commonly referred to as edges) of all control volumes $K\in\T$ is denoted by $\E$ and by $\abssig$, we denote the ($d-1$)- dimensional Lebesgue measure of $\sigma\in\E$. By $\E_{int}, \, \E_{ext}$ and $\E(K)$, we denote the subsets of all internal edges, external edges, i.e.\ edges lying on $\partial\Omega$, and the edges of a control volume $K\in\T$, respectively. Any two cells $K,L\in\T$ share a common edge denoted by $\sigma=K\vert L$. For $\sigma\in\E(K)$, $\nuk$ denotes the unit normal to $\sigma$ pointing outwards from $K$. By $a\lesssim b$ we mean $a\leq cb$ for a  constant $c>0$, independent of the mesh parameters.

The mesh size $h_\T$ is defined by 
\begin{equation}
\label{eqn:mesh-size}
    h_\T=\sup\lbrace h_K\colon K\in\T\rbrace,
\end{equation}
where $h_K = \text{diam}(K)$ and we assume that there exists a constant $\alpha>0$ (independent of other mesh parameters) such that 
\[
\alpha h_\T\leq\inf\lbrace h_K\colon K\in\T\rbrace. 
\]

By $\Lt(\Omega)\subset L^\infty(\Omega)$, we denote the space of all scalar-valued functions constant on each cell $K\in\T$ with the associated projection map $\Pi_\T\colon L^1(\Omega)\to\Lt(\Omega)$ defined as
\begin{equation}
    \label{eqn:proj-op-prim}
    \begin{split}
    &\Pi_\T q=\sum_{K\in\T}\bigl(\Pi_\T q\bigr)_K\X_K, \\
    &\bigl(\Pi_\T q\bigr)_K=\frac{1}{\absk}\int_K q\,\dx,
    \end{split}
\end{equation}
where $\X_K$ denotes the indicator function of $K$. Analogously, we define $\Lt(\Omega;\R^d)$, the space of all vector-valued piecewise constant functions with the projection operator being defined componentwise. 

For $q\in\Lt(\Omega)$, $q_K$ denotes the constant value of $q$ on the cell $K\in\T$. Further, if we have $\sigma = K\vert L\in\E(K)$, we define the average value of $q$ across $\sigma$ as 
\begin{equation}
    \label{eqn:avg-val-op}
    \overline{q}_{\sigma}=\frac{q_K+q_L}{2}.
\end{equation}

The discretization is collocated in the sense that the discrete unknowns are all associated to the same location. The discrete unknowns are thus $(\rk)_{K\in\T}$ for the density and $(\uk)_{K\in\T}$ for the velocity and we denote the pressure $\pk=p(\rk)=a\rk^\gamma$ for $K\in\T$.

\subsubsection{Discrete Mass Flux and Discrete Differential Operators}

\label{subsec:flx-dif-op}

We introduce the discrete mass flux and the discrete differential operators on the function space $\Lt(\Omega)$ defined above.

\begin{definition}[Discrete upwind mass flux]
    \label{def:mss-flx}
    For each $K\in\T$ and $\sigma\in\E(K)$, $\sigma=K\vert L$, the mass flux $\flx\colon\Lt(\Omega)\times\Lt(\Omega;\R^d)\to\R$ is defined as 
    \begin{equation}
        \label{eqn:mss-flx}
        \flx(q,\bv)=\abssig\bigl\lbrace q_K(\vsk)^+ + q_L(\vsk)^-\bigr\rbrace=\abssig\bigl\lbrace \flx^+ +\flx^-\bigr\rbrace,
    \end{equation}
    where we define 
    \[
    \vsk = 
    \begin{dcases}
        \overline{\bv}_\sigma\cdot\nuk, &\text{if }\sigma = K\vert L\in\E(K), \\
        0, &\text{if }\sigma\in\E(K)\cap\E_{ext}.
    \end{dcases}
    \]
    The positive and negative parts of $\vsk$ will be specified later once we give a description of the numerical scheme.
\end{definition}

\begin{definition}[Discrete gradient and divergence]
    \label{def:disc-grad-div}
    In the case of a collocated grid, we follow \cite{HLC20} in defining the discrete gradient and divergence operators so that the classical grad-div duality still holds at the discrete level.

    The discrete divergence operator $\divt\colon\Lt(\Omega;\R^d)\to\Lt(\Omega)$ is defined as 
    \begin{equation}
        \label{eqn:disc-div}
        \begin{split}
        &\divt \bv=\sum_{K\in\T}(\divt \bv)_K \X_K, \\
        &(\divt \bv)_K=\frac{1}{\absk}\sum_{\sink}\abssig\vsk,
        \end{split}
    \end{equation}

    while the discrete gradient operator $\gradt\colon\Lt(\Omega)\to\Lt(\Omega;\R^d)$ reads
    \begin{equation}
        \label{eqn:disc-grad}
        \begin{split}
        &\gradt q=\sum_{K\in\T}(\gradt q)_K\X_K, \\
        &(\gradt q)_K=\frac{1}{\absk}\Bigl\lbrack\,\sum_{\substack{\sink \\ \sigma=K\vert L}}\abssig\overline{q}_{\sigma}\,\nuk+\sum_{\sigma\in\E_{ext}\cap\E(K)}\abssig q_K\,\nuk\Bigr\rbrack.
        \end{split}
    \end{equation}
\end{definition}

We now report the following lemma from \cite{HLC20}. 
\begin{lemma}[Discrete grad-div duality]
    \label{lem:disc-grad-div-dual}
    The discrete gradient operator defined by \eqref{eqn:disc-grad} satisfies
    \begin{equation}
        \label{eqn:disc-grad2}
        (\gradt q)_K = \frac{1}{\absk}\sum_{\substack{\sink \\ \sigma = K\vert L}}\abssig\frac{q_L - q_K}{2}\nuk.
    \end{equation}
    Further, the discrete analogue of the classical grad-div duality holds, i.e.\ for any $q\in\Lt(\Omega)$ and $\bv\in\Lt(\Omega;\R^d)$ 
    \begin{equation}
        \label{eqn:disc-grad-div-dual}
        \int_\Omega\bigl(\gradt q\cdot\bv+q\,\divt\bv\bigr)\,\dx=0.
    \end{equation}
\end{lemma}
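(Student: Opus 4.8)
The plan is to prove the two assertions of Lemma~\ref{lem:disc-grad-div-dual} in sequence: first the alternative formula~\eqref{eqn:disc-grad2} for the discrete gradient, and then the grad–div duality~\eqref{eqn:disc-grad-div-dual}, which will follow almost immediately once~\eqref{eqn:disc-grad2} is in hand. For the first part, I would start from the definition~\eqref{eqn:disc-grad} and rewrite $(\gradt q)_K$ by introducing the cell value $q_K$ artificially on each interior edge: since $\overline{q}_\sigma = \frac{q_K+q_L}{2} = q_K + \frac{q_L-q_K}{2}$ for $\sigma = K\vert L$, and since $q_K = q_K + \frac{q_K - q_K}{2}$ trivially on exterior edges, the expression becomes
\[
(\gradt q)_K = \frac{1}{\absk}\Bigl[\, q_K \sum_{\sink}\abssig\,\nuk \;+\; \sum_{\substack{\sink \\ \sigma = K\vert L}}\abssig\frac{q_L-q_K}{2}\,\nuk \,\Bigr].
\]
The key observation is then the purely geometric identity $\sum_{\sink}\abssig\,\nuk = 0$ for each control volume $K$ (the outward normals of a closed polytope, weighted by face areas, sum to zero — a consequence of the divergence theorem applied to a constant vector field, or of the fact that $K$ is a rectangle/parallelepiped whose opposite faces cancel). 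This kills the first term and leaves exactly~\eqref{eqn:disc-grad2}.

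For the duality~\eqref{eqn:disc-grad-div-dual}, I would expand the integral as a sum over cells using that both $\gradt q$ and $\divt \bv$ are piecewise constant:
\[
\int_\Omega\bigl(\gradt q\cdot\bv + q\,\divt\bv\bigr)\,\dx = \sum_{K\in\T}\absk\,(\gradt q)_K\cdot\bv_K + \sum_{K\in\T}\absk\, q_K\,(\divt\bv)_K.
\]
Substituting~\eqref{eqn:disc-grad2} for the first sum and~\eqref{eqn:disc-div} for the second, both double sums run over pairs $(K,\sigma)$ with $\sigma\in\E_{int}$. The standard manoeuvre is to reorganize each sum over cells-then-edges into a sum over interior edges $\sigma = K\vert L$, collecting the two contributions coming from $K$ and from $L$; here one uses $\nu_{\sigma,L} = -\nuk$ and $v_{\sigma,L} = -\vsk$ (the edge-normal velocity flips sign depending on the side). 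After this reindexing, the contribution of a generic edge $\sigma = K\vert L$ from the gradient term is $\abssig\frac{q_L-q_K}{2}(\bv_K - \bv_L)\cdot\nuk$ — wait, more precisely $\abssig\frac{q_L-q_K}{2}\nuk\cdot\bv_K + \abssig\frac{q_K-q_L}{2}\nu_{\sigma,L}\cdot\bv_L = \abssig\frac{q_L - q_K}{2}(\bv_K + \bv_L)\cdot\nuk = \abssig(q_L-q_K)\,\overline{\bv}_\sigma\cdot\nuk = \abssig(q_L-q_K)\vsk$, while the contribution from the divergence term is $q_K\abssig\vsk + q_L\abssig v_{\sigma,L} = (q_K - q_L)\abssig\vsk$. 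These are exactly negatives of each other, so every edge contributes zero and the total vanishes.

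The main obstacle, such as it is, is bookkeeping rather than conceptual: one must be careful about the exterior edges (where $\vsk = 0$ by Definition~\ref{def:mss-flx}, so they contribute nothing to either side, and the $q_K\nuk$ terms in~\eqref{eqn:disc-grad} are precisely what get absorbed when establishing~\eqref{eqn:disc-grad2}), about the sign conventions $\nu_{\sigma,L} = -\nu_{\sigma,K}$ and $v_{\sigma,L} = -v_{\sigma,K}$ when splitting the cell-sums into edge-sums, and about the definition~\eqref{eqn:avg-val-op} of $\overline{\bv}_\sigma$ which makes the cross terms combine into $\vsk$. Since both claims are quoted verbatim from \cite{HLC20}, I would present this argument compactly, emphasizing the geometric identity $\sum_{\sink}\abssig\nuk = 0$ and the edge-wise cancellation, and refer to \cite{HLC20} for any remaining routine details.
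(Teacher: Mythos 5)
Your argument is correct: the rewriting of $\overline{q}_\sigma$ as $q_K+\tfrac{q_L-q_K}{2}$ combined with the geometric identity $\sum_{\sink}\abssig\,\nuk=0$ gives \eqref{eqn:disc-grad2}, and the edge-wise reindexing (using $\nu_{\sigma,L}=-\nuk$, the symmetry of $\overline{\bv}_\sigma$, and $\vsk=0$ on exterior edges) shows that each interior edge contributes $\abssig(q_L-q_K)\vsk$ from the gradient term and its exact negative from the divergence term. The paper itself gives no proof --- it quotes the lemma from \cite{HLC20} --- and your argument is the standard one found there, so there is nothing to compare beyond noting that you have supplied the omitted details correctly (only the self-correcting aside ``wait, more precisely'' should be cleaned up before presenting it).
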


\begin{definition}[Upwind divergence operator]
    \label{def:up-div-op}
    We define an upwind divergence operator $\divup\colon\Lt(\Omega)\times\Lt(\Omega;\R^d)\to\Lt(\Omega)$ in order to discretize the convective terms in the mass and momentum balances which reads
    \begin{equation}
        \label{eqn:up-div-op}
        \begin{split}
            &\divup(q,\bv) = \sum_{K\in\T}(\divup(q,\bv))_K\X_K,\\
            &(\divup(q,\bv))_K=\frac{1}{\absk}\sum_{\substack{\sink \\ \sigma=K\vert L}}\flx(q,\bv).
        \end{split}
    \end{equation}
\end{definition}

\subsubsection{The Scheme} 
\label{ssubec-scheme}

Let us consider a discretization $0=t^0<t^1<\cdots<t^N=T$ of the time interval $\lbrack 0,T\rbrack$ and let $\delt=t^{n+1}-t^n$ for $n=0,1,\dots,N-1$ be the constant timestep. We initialize the scheme by taking the initial approximations to be the average values of our initial data $\vrho_0$ and $\bu_0$ respectively, i.e.\
\begin{equation}
    \label{eqn:disc-ini-dat}
    \begin{split}
        &\rk^0=(\Pi_\T\vrho_0)_K=\frac{1}{\absk}\int_K\vrho_0\,\dx, \\
        &\uk^0=(\Pi_\T\bu_0)_K=\frac{1}{\absk}\int_K\bu_0\,\dx.
    \end{split}
\end{equation}
Further, we also assume that $\vrho_0\in L^\infty(\Omega), \bu_0\in L^\infty(\Omega;\R^d)$ and $\vrho_0>0$ a.e.\ in $\Omega$.

We now consider the following fully discrete scheme for $0\leq n\leq N-1$:
\begin{subequations}
    \begin{align}
        &\frac{1}{\delt}(\rk^{n+1}-\rk^n)+(\divup(\vrho^{n+1},\bv^n))_K=0, \ \forall\,K\in\T, \label{eqn:disc-mss-bal}\\
        &\frac{1}{\delt}(\rk^{n+1}\uk^{n+1}-\rk^n\uk^n)+(\divup(\vrho^{n+1}\bu^n,\bv^n))_K+(\gradt p^{n+1})_K= 0, \ \forall\,K\in\T, \label{eqn:disc-mom-bal}
    \end{align}
\end{subequations}
where $\bv^n=\bu^n-\delta\bu^{n+1}$ denotes the stabilized velocity and we set $\delta\bu^{n+1}_K=\eta\delt(\gradt p^{n+1})_K$ with $\eta>0$, a parameter to be determined later; cf.\ \eqref{eqn:mod-ent-ineq}. The positive and negative parts of the term $\vsk$ appearing in the mass flux $\flx$ are defined so as to bring about an upwind-bias and maintain the signs split in $\flx$.
\begin{equation}
    \label{eqn:pos-neg-stab-velo}
    \begin{split}
    &(\vsk^n)^{+}=u_{\sk}^{n,+}-\delta u^{n+1,-}_{\sk}\geq 0, \\
    &(\vsk^n)^{-}=u_{\sk}^{n,-}-\delta u^{n+1,+}_{\sk}\leq 0,
    \end{split}
\end{equation}
wherein $a^{\pm}=\dfrac{a\pm\abs{a}}{2}$ denotes the standard positive and negative halves.

The discrete momentum update \eqref{eqn:disc-mom-bal} along with the discrete mass update \eqref{eqn:disc-mss-bal} yields the following update of the velocity component which is used in establishing the entropy stability.
\begin{equation}
    \label{eqn:disc-vel-upd}
    \frac{1}{\delt}(\uk^{n+1}-\uk^n)+\frac{1}{\absk}\sum_{\substack{\sink \\ \sigma=K\vert L}}\abssig\Biggl(\frac{\bu^n_L-\uk^n}{\rk^{n+1}}\Biggr) \flx^{n+1,-}+\frac{1}{\rk^{n+1}}(\gradt p^{n+1})_K=0, \ \forall\,K\in\T,
\end{equation}
where $\flx^{n+1,-}=\vrho_L^{n+1}(\vsk^n)^{-}$.

\subsection{Main Results and Organization}
\label{subsec-results}

The proposed scheme \eqref{eqn:disc-mss-bal}-\eqref{eqn:disc-mom-bal} involves solving a non-linear system at each time step, mainly due to the presence of non-linear stabilization terms in the mass balance. However, once we compute the updated density $\vrho^{n+1}$ from \eqref{eqn:disc-mss-bal}, the updated velocity $\bu^{n+1}$ can be computed in an explicit manner from the momentum balance \eqref{eqn:disc-mom-bal}. The existence of a discrete solution to the scheme is established following a standard topological degree argument, analogous to the treatment in \cite{AGK22,GMN19,Nir01,OCC06},  which uses classical tools from topological degree theory in finite dimensions; see \cite{Dei85}. For the sake of completeness, we just state the existence result.

\begin{theorem}[Existence of a Discrete Solution]
    \label{thm:exist-num-sol}
    Let $(\vrho^n,\bu^n)\in\Lt(\Omega)\times\Lt(\Omega;\R^{d})$ be such that $\vrho^n>0$ on $\Omega$. Then, there exists a solution $(\vrho^{n+1},\bu^{n+1})\in\Lt(\Omega)\times\Lt(\Omega;\R^{d})$ of \eqref{eqn:disc-mss-bal}-\eqref{eqn:disc-mom-bal} satisfying $\vrho^{n+1}>0$ on $\Omega$.
\end{theorem}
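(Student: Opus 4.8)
The approach is to combine the triangular (sequential) structure of the update with a topological degree argument for the density, following the strategy of \cite{AGK22,GMN19,Nir01,OCC06} and the degree-theoretic tools of \cite{Dei85}. First I would reduce the problem: it suffices to produce $\vrho^{n+1}\in\Lt(\Omega)$ with $\vrho^{n+1}>0$ solving the mass balance \eqref{eqn:disc-mss-bal}, since then the updated velocity $\uk^{n+1}$ is obtained for each $K\in\T$ simply by dividing \eqref{eqn:disc-mom-bal} (equivalently \eqref{eqn:disc-vel-upd}) by $\rk^{n+1}>0$, yielding the desired $(\vrho^{n+1},\bu^{n+1})$. Note that the stabilized velocity $\bv^n=\bu^n-\eta\delt\gradt p(\vrho^{n+1})$, and hence the mass flux $\flx$, depends on the unknown $\vrho^{n+1}$, so the mass balance is a genuinely nonlinear system for $\vrho^{n+1}$.

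For the degree setup, identify $\Lt(\Omega)$ with a Euclidean space of dimension equal to the number of control volumes and define the continuous map $\mathcal{F}(\vrho)_K=\tfrac{1}{\delt}(\rk-\rk^n)+\bigl(\divup(\vrho,\bu^n-\eta\delt\gradt p(\vrho))\bigr)_K$, where in $\flx$ the positive and negative parts are those prescribed in \eqref{eqn:pos-neg-stab-velo} (which are $\ge 0$, resp. $\le 0$, by their very construction). Introduce the homotopy $H(\vrho,\lambda)_K=\tfrac{1}{\delt}(\rk-\rk^n)+\lambda\bigl(\divup(\vrho,\bu^n-\eta\delt\gradt p(\vrho))\bigr)_K$ for $\lambda\in[0,1]$, so that $H(\cdot,1)=\mathcal{F}$ while $H(\cdot,0)=\mathrm{id}-\vrho^n$ has the single non-degenerate zero $\vrho^n$, of Brouwer degree $\pm 1$ on any open ball about it. A regularization of the pressure, e.g.\ replacing $p(\vrho)$ by $p(\max(\vrho,\veps))$ with $\veps>0$ small, may be inserted here and removed a posteriori.

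The crux is the a priori estimates: for every $\lambda\in[0,1]$ any zero $\vrho$ of $H(\cdot,\lambda)$ must stay in a fixed bounded subset of the positive cone. For positivity I would use a discrete minimum principle: at a cell $K_0$ minimizing $(\rk)_{K\in\T}$ one has $\vrho_L\ge\vrho_{K_0}$ across every $\sigma=K_0\vert L\in\E(K_0)$, and since the split in $\flx$ carries the correct signs, the off-diagonal contribution has a favourable sign, which after rearranging forces $\vrho_{K_0}$ to be bounded below in terms of $\rk^n_{K_0}>0$ (the pressure regularization guarantees the coefficient multiplying $\vrho_{K_0}$ is positive; one then checks the truncation is inactive). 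For the upper bound, summing \eqref{eqn:disc-mss-bal} (or its $\lambda$-analogue) over $K\in\T$ and using the conservativity of $\flx$ — internal edges cancel in pairs and $\vsk$ vanishes on external edges — gives $\sum_{K\in\T}\absk\rk^{n+1}=\sum_{K\in\T}\absk\rk^n$, so that together with positivity the iterate stays in a bounded region independent of $\lambda$. Hence $H(\cdot,\lambda)$ has no zero on the boundary of a sufficiently large ball $B$, for all $\lambda$.

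Homotopy invariance then yields $\deg(\mathcal{F},B,0)=\deg(\mathrm{id}-\vrho^n,B,0)=\pm 1\neq 0$, so $\mathcal{F}$ has a zero $\vrho^{n+1}\in B$, which by the a priori bounds satisfies $\vrho^{n+1}>0$; the velocity $\bu^{n+1}$ is then recovered as in the reduction step. I expect the main obstacle to be the a priori estimates above — specifically, verifying that the upwind sign-split \eqref{eqn:pos-neg-stab-velo} preserves the discrete minimum-principle structure even though the stabilization term $\eta\delt\gradt p(\vrho^{n+1})$ sits inside the convective flux (so that the ``diagonal'' and ``off-diagonal'' weights themselves depend on the unknown), and that the resulting bounds are uniform in the homotopy parameter; this is handled via the pressure truncation, which is afterwards shown to be inactive. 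No CFL-type restriction is needed for the mere existence of a positive discrete solution.
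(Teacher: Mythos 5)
Your overall architecture is exactly the one the paper has in mind: the paper does not actually write out a proof of Theorem~\ref{thm:exist-num-sol} but states that it ``is established following a standard topological degree argument, analogous to the treatment in \cite{AGK22,GMN19,Nir01,OCC06}.'' Your reduction (solve the nonlinear mass balance for $\vrho^{n+1}$ first, then recover $\bu^{n+1}$ explicitly from \eqref{eqn:disc-mom-bal} after dividing by $\rk^{n+1}>0$) matches the paper's own remark in Section~\ref{subsec-results}, the homotopy to $\mathrm{id}-\vrho^n$, the pressure truncation removed a posteriori, the $L^1$ bound from conservativity of $\flx$, and the observation that no CFL condition is needed for existence are all as expected.

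The one step that does not close as written is the positivity estimate. At the minimizing cell $K_0$ you only know $\vrho_L\geq\vrho_{K_0}$, not $\vrho_L\geq 0$, so the ``favourable sign'' of the off-diagonal term $\sum_\sigma\abssig\vrho_L(\vsk)^-$ is precisely what you are trying to prove; replacing $\vrho_L$ by $\vrho_{K_0}$ instead leads, after recombining $(\vsk)^++(\vsk)^-=\vsk$, to the inequality $\vrho_{K_0}\bigl(1+\lambda\,\delt\,(\divt\bv^n)_{K_0}\bigr)\geq\vrho^n_{K_0}>0$, which yields a contradiction with $\vrho_{K_0}\leq 0$ only when $1+\lambda\,\delt\,(\divt\bv^n)_{K_0}>0$, i.e.\ under a hidden CFL-type restriction that the theorem does not assume. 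The unconditional argument uses the global structure rather than a pointwise minimum principle: for any zero $\vrho$ of $H(\cdot,\lambda)$, write the mass balance as $A(\vrho)\vrho=b$ with $b_K=\tfrac{\absk}{\delt}\vrho^n_K>0$, diagonal entries $\tfrac{\absk}{\delt}+\lambda\sum_\sigma\abssig(\vsk)^+>0$, off-diagonal entries $\lambda\abssig(\vsk)^-\leq 0$, and, by the antisymmetry $(\vsk)^+=-(v_{\sigma,L})^-$, column sums equal to $\tfrac{\absk}{\delt}$; hence $A(\vrho)$ is an M-matrix, $\vrho=A(\vrho)^{-1}b\geq 0$, and then the $K$-th row gives the quantitative bound $\rk\geq b_K/A_{KK}>0$, uniformly in $\lambda$ once the upper bound on $\vrho$ (from mass conservation and nonnegativity) controls $\gradt p$ and hence $\bv^n$. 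With that substitution your proof is complete and is the intended one.
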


Once the existence of a discrete solution is known, we can define the notion of a numerical solution of the scheme as follows.

\begin{definition}[Numerical Solution of the Scheme]
    \label{def:scheme-soln}
    Let $\lbrace(\vrho^n,\bu^n)\in\Lt(\Omega)\times\Lt(\Omega;\R^d)\colon n=0,\dots,N-1\,;\,\vrho^n>0\rbrace$ be a family of discrete solutions to the scheme \eqref{eqn:disc-mss-bal}-\eqref{eqn:disc-mom-bal}. Then a numerical solution of the scheme corresponding to the space-time discretization $(\T,\delt)$ is defined to be the pair of piecewise constant functions 
    \begin{equation}
        \label{eqn:scheme-soln}
            \begin{split}
                &\vrho_{\T,\delt}(t,x)=\sum_{n=0}^{N-1}\sum_{K\in\T}\rk^n\X_K(x)\X_{[t_n,t_{n+1})}(t), \\
                &\bu_{\T,\delt}(t,x)=\sum_{n=0}^{N-1}\sum_{K\in\T}\uk^n\X_K(x)\X_{[t_n,t_{n+1})}(t).
            \end{split}
    \end{equation}
\end{definition}

Though Theorem \ref{thm:exist-num-sol} guarantees the positivity of density, we cannot claim or prove the existence of suitable a priori bounds that guarantee the numerical density being bounded below, away from zero or being bounded above. The density being bounded uniformly from above and below is crucial in the analysis carried out later. Hence, we impose the following as our principal working hypothesis; see also \cite{FLM20b,FLM+21a}. 

\begin{hypothesis}
\label{hyp:den-bound}
    There exist constants $\overline{\vrho}\geq\underline{\vrho}>0$ (independent of the mesh parameters) such that
    \begin{equation}
    \label{eqn:no-vac}
        0<\underline{\vrho}\leq\vrho_{\T,\delt}\leq\overline{\vrho}.
    \end{equation}
\end{hypothesis}

We can now state the main result concerning the entropy stability of the proposed scheme and we defer the proof to Section \ref{sec:ent-stab}.

\begin{theorem}[Local In-Time Entropy Inequality]
    \label{thm:disc-loc-ent-ineq}
1,    Any numerical solution to the scheme \eqref{eqn:disc-mss-bal}-\eqref{eqn:disc-mom-bal} satisfies the following entropy inequality
    \begin{equation}
        \label{eqn:disc-loc-ent-ineq}
        \sum_{K\in\T}\absk E^{n+1}_K\leq\sum_{K\in\T}\absk E^n_K,
    \end{equation}
    where $E^n_K = \displaystyle\half\rk^n\abs{\uk^n}^2 + \psig(\rk^n)$, for each $0\leq n\leq N-1$, under the conditions
    \begin{enumerate}
        \item $\eta\geq\dfrac{1}{\rk^{n+1}}$;
        
        \item $\delt\leq\dfrac{\absk\rk^{n+1}}{2\displaystyle\sum_{\substack{\sink \\ \sigma=K\vert L}}\abssig(-\flx^{n+1,-})}$.
    \end{enumerate}
\end{theorem}

In addition, we also get the following global entropy estimate. 

\begin{theorem}[Global Entropy Estimate]
    \label{thm:glob-ent-est}
    Assume that the hypothesis of Theorem \ref{thm:disc-loc-ent-ineq} holds. Then, there exists a constant $C>0$ such that the discrete solutions $(\vrho^n,\bu^n)_{0\leq n\leq N}$ satisfy the following global entropy inequality:
    \begin{equation}
    \label{eqn:disc-glob-ent-est}
        \sum_{K\in\T}\absk E^n_K + \sum_{r=0}^{n-1}\delt^2\sum_{K\in\T}\abs{K}\biggl(\eta - \frac{1}{\vrho^{r+1}_K}\biggr)\abs{(\gradt p^{r+1})_K}^2\leq C.
    \end{equation}
\end{theorem}

It is convenient to perform the consistency analysis of the scheme on the dual mesh. However, we remark that the dual mesh is not needed to implement the scheme. In order to perform the said analysis, in Section \ref{sec:cons}, we introduce the dual mesh $\D$ corresponding to the primal mesh $\T$. We also introduce a reconstruction operator $\mathcal{R}_\T$ which transforms a function which is piecewise constant on the control volumes of the primal mesh into a function which is piecewise constant on the dual cells; see \cite{HLC20} for more details. We now present a result related to the weak convergence of these dual mesh reconstructions and postpone the proof to Section \ref{sec:cons}.

\begin{lemma}[Weak Convergence of Dual Mesh Reconstructions]
    \label{lem:wk-cong-mesh-recon}
    Let $(\T^\m)_{m\in\N}$ be a sequence of space discretizations such that the mesh size $h_{\T^\m}\to 0$ as $m\to\infty$.
    Let $1\leq p<\infty$ and let $q\in L^p(\Omega)$. Let $\lbrace q^\m\rbrace_{m\in\N}$ be a uniformly bounded sequence in $L^p(\Omega)$ such that $q^\m\in\mathcal{L}_{\T^\m}(\Omega)$ for each $m\in\N$ and $q^\m\rightharpoonup q$ in $L^p(\Omega)$ as $m\to\infty$. 
    Let $\mathcal{R}_{\T^\m}$ denote the reconstruction operator on the mesh $\T^\m$. Then, $\mathcal{R}_{\T^\m} q^\m\rightharpoonup q$ in $L^p(\Omega)$ as $m\to\infty$.
\end{lemma}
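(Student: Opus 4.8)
The plan is to prove the lemma by combining a mesh–uniform $L^p$-stability estimate for the reconstruction operator with an identification of the weak limit, tested only against smooth functions. First I would record that $\mathcal{R}_\T$ is $L^p$-stable with a constant depending only on $d$ and the mesh-regularity parameter $\alpha$: for every $w\in\Lt(\Omega)$ one has $\norm{\mathcal{R}_\T w}_{L^p(\Omega)}\lesssim\norm{w}_{L^p(\Omega)}$. This is immediate from the structure of the reconstruction of \cite{HLC20}: on each dual cell $D$ the value $(\mathcal{R}_\T w)_D$ is a convex combination of the primal values $w_K$ over the finitely many cells $K$ meeting $D$ (in the mean-preserving case, $(\mathcal{R}_\T w)_D=\frac{1}{\abs{D}}\int_D w$), so Jensen's inequality gives $\abs{D}\,\abs{(\mathcal{R}_\T w)_D}^p\lesssim\int_D\abs{w}^p$, and summing over the dual cells, which tile $\overline{\Omega}$, yields the bound (with constant $1$ in the mean-preserving case, and $C(d,\alpha)$ in general by a bounded-overlap argument based on $\alpha h_\T\le\inf_K h_K$). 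Applying this with $w=q^\m$ and using the assumed uniform bound on $\norm{q^\m}_{L^p(\Omega)}$ shows that $\{\mathcal{R}_{\T^\m}q^\m\}_{m\in\N}$ is uniformly bounded in $L^p(\Omega)$.

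The core step is the quantitative estimate that, for $\varphi\in\Cinf(\Omega)$,
\[
\abs{\int_\Omega\bigl(\mathcal{R}_\T w-w\bigr)\varphi\,\dx}\;\lesssim\;h_\T\,\norm{w}_{L^p(\Omega)}\,\norm{\grad\varphi}_{L^{p'}(\Omega)},\qquad w\in\Lt(\Omega),
\]
with $p'$ the conjugate exponent of $p$. The mechanism is that $\mathcal{R}_\T$ preserves the mean of $w$ on each dual cell $D$, so, writing $\overline{\varphi}_D$ for the average of $\varphi$ over $D$,
\[
\int_D\bigl(\mathcal{R}_\T w-w\bigr)\varphi\,\dx=\int_D\bigl(\mathcal{R}_\T w-w\bigr)\bigl(\varphi-\overline{\varphi}_D\bigr)\,\dx.
\]
A Poincaré--Wirtinger inequality on $D$ (of diameter $\lesssim h_\T$ and shape-regular by the mesh hypothesis) bounds $\norm{\varphi-\overline{\varphi}_D}_{L^{p'}(D)}\lesssim h_\T\norm{\grad\varphi}_{L^{p'}(D)}$, Step~1 gives $\norm{\mathcal{R}_\T w-w}_{L^p(D)}\lesssim\norm{w}_{L^p(D)}$, and a Hölder inequality on $D$ followed by summation over the dual cells (which partition $\Omega$) finishes the estimate. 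Taking $w=q^\m$, the right-hand side tends to $0$ as $m\to\infty$ because $h_{\T^\m}\to0$ while $\norm{q^\m}_{L^p}$ stays bounded; combined with $\int_\Omega q^\m\varphi\,\dx\to\int_\Omega q\varphi\,\dx$ (the assumed weak convergence), this yields $\int_\Omega\mathcal{R}_{\T^\m}q^\m\,\varphi\,\dx\to\int_\Omega q\,\varphi\,\dx$ for every $\varphi\in\Cinf(\Omega)$.

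It then remains to upgrade convergence against $\Cinf(\Omega)$ to weak convergence in $L^p(\Omega)$. For $1<p<\infty$ this is a routine $3\veps$ argument: $\Cinf(\Omega)$ is dense in $L^{p'}(\Omega)$ and $\{\mathcal{R}_{\T^\m}q^\m\}$ is bounded in $L^p(\Omega)$ by Step~1, so the convergence extends to all test functions in $L^{p'}(\Omega)$, which is the assertion $\mathcal{R}_{\T^\m}q^\m\rightharpoonup q$ in $L^p(\Omega)$. The endpoint $p=1$ is where I expect the only real difficulty, since $\Cinf(\Omega)$ is not dense in $L^\infty(\Omega)$: there I would use that weak $L^1$-convergence of $\{q^\m\}$ forces equi-integrability (Dunford--Pettis), that the local-averaging bound $\int_A\abs{\mathcal{R}_{\T^\m}q^\m}\,\dx\le\int_{A^{h}}\abs{q^\m}\,\dx$ — with $A^h$ the $h$-neighbourhood of $A$ and $h\lesssim h_{\T^\m}$ — transfers equi-integrability to $\{\mathcal{R}_{\T^\m}q^\m\}$, and hence that this sequence is relatively weakly sequentially compact in $L^1(\Omega)$; since the core estimate identifies the weak limit of every subsequence as $q$ (an $L^1$ function being determined by its integrals against $\Cinf(\Omega)$), the whole sequence converges weakly to $q$.

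Thus the main obstacle is really twofold and mostly bookkeeping: handling the $p=1$ endpoint correctly through equi-integrability, and extracting from the definition of $\mathcal{R}_\T$ in \cite{HLC20} the two structural properties used throughout — mean preservation on each dual cell and $L^p$-stability with a mesh-independent constant — everything else reducing to Jensen, Poincaré--Wirtinger, Hölder, and the density of $\Cinf(\Omega)$.
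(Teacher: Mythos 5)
Your core estimate hinges on the claim that $\mathcal{R}_\T$ preserves the mean of $w$ over each dual cell, i.e.\ $\int_{\dsig}\mathcal{R}_\T w\,\dx=\int_{\dsig}w\,\dx$, which is what allows you to replace $\varphi$ by $\varphi-\overline{\varphi}_{\dsig}$ and gain the factor $h_\T$ from Poincar\'e--Wirtinger. For the operator actually defined in this paper (Definition \ref{def:recon-op}) this is false in general: on $\dsig=D_{K,\sigma}\cup D_{L,\sigma}$ one computes $\int_{\dsig}(\mathcal{R}_\T w-w)\,\dx=\bigl(\mu_\sigma\abs{\dsig}-\abs{D_{K,\sigma}}\bigr)(w_K-w_L)$, which vanishes only for the volume-weighted choice $\mu_\sigma=\abs{D_{K,\sigma}}/\abs{\dsig}$. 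The definition permits arbitrary $\mu_\sigma\in[0,1]$, and the instance used in the consistency proof is the upwind one, $\mu_\sigma\in\{0,1\}$ selected by the sign of $\vsk$, hence data-dependent. The leftover term you would need to control is $\sum_\sigma\overline{\varphi}_{\dsig}\bigl(\mu_\sigma\abs{\dsig}-\abs{D_{K,\sigma}}\bigr)(w_K-w_L)$, which is of size $\sum_\sigma\abs{\dsig}\,\abs{w_K-w_L}$ --- an $O(\norm{w}_{L^1})$ quantity, not $O(h_\T\norm{w}_{L^p})$, for a merely $L^p$-bounded piecewise constant $w$. The loss is not cosmetic: in one dimension on a uniform mesh take $q^\m$ alternating $\pm1$ on consecutive cells, so $q^\m\rightharpoonup 0$, and choose $\mu_\sigma=1$ when the left cell value is $+1$ and $\mu_\sigma=0$ otherwise; then $\hat q_\sigma=+1$ on every interior dual cell and $\mathcal{R}_{\T^\m}q^\m\rightharpoonup 1\neq 0$. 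So for data-dependent weights no argument along your lines (or any other) can close; your proof is complete only for the mean-preserving member of the family, and this is a genuine gap relative to the operator the paper defines and uses.

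Apart from this, your route is genuinely different from the paper's. The paper proves the stronger statement $\norm{\mathcal{R}_{\T^\m}q^\m-q^\m}_{L^p(\Omega)}\to0$ (strong convergence of the difference), never invoking mean preservation: it combines the $L^p$-stability of Lemma \ref{lem:stab-recon-op} with the estimate $\norm{\mathcal{R}_\T q-q}_{L^p(\Omega)}\leq ch_\T\norm{\gradd q}_{L^p(\Omega)}$ of Lemma \ref{lem:recon-grad-est} applied to projections of smooth approximants $\varphi^\m$ of $q^\m$, paying instead with a density argument that requires the $\varphi^\m$ to be uniformly controlled. Your duality argument tests only against fixed smooth $\varphi$ and so avoids that approximation step entirely, and your treatment of the endpoint $p=1$ via Dunford--Pettis and the transfer of equi-integrability through $\int_A\abs{\mathcal{R}_{\T^\m}q^\m}\,\dx\lesssim\int_{A^{h}}\abs{q^\m}\,\dx$ is careful and correct; for the volume-weighted reconstruction your proof is clean and arguably sharper than the paper's. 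But as written it does not prove the lemma as stated, because mean preservation is not a property of $\mathcal{R}_\T$ here.
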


Before we state the consistency result, we would like to remark that as a consequence of the timestep restriction imposed by Theorem \ref{thm:disc-loc-ent-ineq}, $\delt\to 0$ as $h_\T\to 0$. We now present the weak consistency result of our scheme and give a proof of the same in Section \ref{sec:cons}. 

\begin{theorem}[Consistency of the Numerical Scheme]
\label{thm:cons-scheme}
Let $(\T,\delt)$ be a given space-time discretization of our domain $\dom$. Let $(\vrho_{\T,\delt},\bu_{\T,\delt})$ denote a numerical solution to the scheme \eqref{eqn:disc-mss-bal}-\eqref{eqn:disc-mom-bal} in the sense of Definition \ref{def:scheme-soln} with the initial data given by \eqref{eqn:disc-ini-dat}. In addition to Hypothesis \ref{hyp:den-bound}, we make the following assumption:
\begin{itemize}    
    \item There exists a constant $\overline{u}>0$ (independent of the mesh parameters) such that 
    \begin{equation}
        \label{eqn:vel-cond}
        \abs{\bu_{\T,\delt}}\leq\overline{u}.
    \end{equation}
\end{itemize}

Then, the scheme \eqref{eqn:disc-mss-bal}-\eqref{eqn:disc-mom-bal} is consistent with the weak form of the Euler equations \eqref{eqn:mss-bal-baro}-\eqref{eqn:mom-bal-baro}, i.e.\
\begin{equation}
    \label{eqn:mss-cons}
    \begin{split}
        -\int_{\Omega}\vrho_{\T,\delt}(0,x)\varphi(0,x)\,\dx\,=\,\int_0^T\int_\Omega\bigl\lbrack &\vrho_{\T,\delt}\Dt\varphi
       +\vrho_{\D,\delt}\bu_{\D,\delt}\cdot\grad\varphi\bigr\rbrack\,\dx\,\dt + \mathcal{R}^{mass}_{\T,\delt}
    \end{split}
\end{equation}   
for any $\varphi\in\Cinf(\lbrack 0,T)\times \Omega)$;
\begin{equation}
    \label{eqn:mom-cons}
    \begin{split}
    -\int_\Omega\vrho_{\T,\delt}(0,x)\bu_{\T,\delt}(0,x)\cdot\uu{\varphi}(0,x)\,\dx\,&=\,\int_0^T\int_\Omega\bigl\lbrack\vrho_{\T,\delt}\bu_{\T,\delt}\cdot\grad\uu{\varphi} 
    +\vrho_{\D,\delt}(\bu_{\D,\delt}\otimes\bu_{\D,\delt})\colon\grad\uu{\varphi}\\
    &+p_{\T,\delt}(t+\delt, x)\Div\uu{\varphi}(t,x)\bigr)\bigr\rbrack\,\dx\,\dt + \mathcal{R}^{mom}_{\T,\delt}
    \end{split}
\end{equation}
for any $\uphi\in\Cinf(\lbrack 0, T)\times\Omega;\R^d)$.

Here, $\vrho_{\D,\delt}$ and $\bu_{\D,\delt}$ denote the reconstructions of $\vrho_{\T,\delt}$ and $\bu_{\T,\delt}$ on the dual mesh and the consistency errors $\mathcal{R}^{mass}_{\T,\delt}$ and $\mathcal{R}^{mom}_{\T,\delt}$ are such that

\begin{equation}
    \label{eqn:cons-errors}
    \begin{split}
        &\abs{\mathcal{R}^{mass}_{\T,\delt}} = o(h_\T,\sqrt{\delt}) \to 0\text{ as }h_\T\to 0,\\
        &\abs{\mathcal{R}^{mom}_{\T,\delt}} = o(h_\T,\sqrt{\delt}) \to 0\text{ as }h_\T\to 0.
    \end{split}
\end{equation}
\end{theorem}

We finally present the key result related to the weak convergence of the scheme and prove it in Section \ref{sec:conv}.

\begin{theorem}[Weak Convergence]
\label{thm:scheme-wk-con}
    Let $(\T^\m,\delt^\m)_{m\in\N}$ be a sequence of space-time discretizations such that $h^\m=h_{\T^\m}\to 0$ as $m\to\infty$. Let ($\vrho^\m,\bu^\m)_{m\in\N}$, $\vrho^\m = \vrho_{\T^\m,\delt^\m}$ and $\bu^\m = \bu_{\T^\m,\delt^\m}$, be the sequence of numerical solutions generated by the scheme. Suppose that the hypotheses of Theorem \ref{thm:disc-loc-ent-ineq} and Theorem \ref{thm:cons-scheme} are satisfied. Let $\bm^\m = \vrho^\m\bu^\m$. Then, there exists a subsequence (not relabelled) such that
    \begin{align*}
        &\vrho^\m\weakstar\langle\V; \tvrho\rangle\text{ in }L^\infty(\odom), \\
        &\bm^\m\weakstar\langle\V;\tbm\rangle\text{ in }L^\infty(\odom; \R^{d}),
    \end{align*}
    where $\vv=\lbrace\V\rbrace_{(t,x)\in\odom}$ is a DMV solution of the barotropic Euler system \eqref{eqn:mss-bal-baro}-\eqref{eqn:mom-bal-baro} (in the sense of Definition \ref{defn:dmv-baro}) with 
    \begin{equation}
        \mathfrak{C}_{cd} = 0\text{ and }\mathfrak{R}_{cd} = 0.
    \end{equation}
\end{theorem}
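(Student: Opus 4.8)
The plan is to extract, along a subsequence, a Young measure generated by the numerical solutions, pass to the limit in the consistency identities of Theorem~\ref{thm:cons-scheme}, and then verify one by one the defining properties in Definition~\ref{defn:dmv-baro}; the uniform $L^\infty$ bounds coming from the standing hypotheses are precisely what will force both concentration defect measures to vanish.

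First I would collect the a priori bounds. Iterating the discrete entropy inequality \eqref{eqn:disc-glob-ent-ineq} down to $n=0$ and using the boundedness of $\vrho_0,\bu_0$ together with the initialization \eqref{eqn:disc-ini-dat} gives $\sup_{n}\sum_{K\in\T^\m}\absk E^n_K\lesssim 1$, uniformly in $m$. Together with the standing hypothesis \eqref{eqn:no-vac} and the assumptions \eqref{eqn:den-cond}--\eqref{eqn:vel-cond} this yields uniform bounds in $L^\infty(\odom)$ for $\vrho^\m$ and $\bu^\m$, hence also for $\bm^\m=\vrho^\m\bu^\m$, for $p(\vrho^\m)=a(\vrho^\m)^\gamma$, for the flux $\vrho^\m\bu^\m\otimes\bu^\m=\tfrac{\bm^\m\otimes\bm^\m}{\vrho^\m}$, and for $E^\m=\half\vrho^\m\abs{\bu^\m}^2+\psig(\vrho^\m)$; in particular the pairs $[\vrho^\m,\bm^\m]$ range in a fixed compact subset of the phase space $\F$ on which $\tvrho\ge\underline{\vrho}>0$. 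By the fundamental theorem on Young measures there is then a subsequence (not relabelled) and a parameterized family $\vv=\lbrace\V\rbrace_{(t,x)\in\odom}\in L_{weak-*}^\infty\bigl(\odom,\Pro(\F)\bigr)$ with $g(\vrho^\m,\bm^\m)\weakstar\langle\V;g\rangle$ in $L^\infty(\odom)$ for all continuous $g$ (it suffices to test continuous functions on the compact range), and, the $L^\infty$ bounds precluding concentration, these weak-$*$ limits carry no defect term. Choosing $g$ to be the coordinate projections gives the claimed convergences $\vrho^\m\weakstar\langle\V;\tvrho\rangle$ and $\bm^\m\weakstar\langle\V;\tbm\rangle$.

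Next I would pass to the limit in the consistency identities \eqref{eqn:mss-cons}--\eqref{eqn:mom-cons}. By \eqref{eqn:cons-errors} the errors $\mathcal{R}^{mass}_{\T,\delt},\mathcal{R}^{mom}_{\T,\delt}\to 0$; here one uses that the CFL restriction in Theorem~\ref{thm:disc-glob-ent-ineq} forces $\delt^\m\to 0$, which in turn shows the time-shifted pressure $p_{\T,\delt}(\cdot+\delt,\cdot)$ has the same weak-$*$ limit $\langle\V;p(\tvrho)\rangle$ as $p(\vrho^\m)$. The terms built from the dual reconstructions $\vrho_{\D,\delt}\bu_{\D,\delt}$ and $\vrho_{\D,\delt}(\bu_{\D,\delt}\otimes\bu_{\D,\delt})$ are handled in two steps: using the elementary properties of the reconstruction operator, the uniform bounds, and $h_{\T^\m}\to 0$, one shows that replacing them by the dual reconstructions of the products $\bm^\m$ and $\vrho^\m\bu^\m\otimes\bu^\m$ costs only an $o(1)$ error in $L^1(\odom)$; then Lemma~\ref{lem:wk-cong-mesh-recon} transfers the weak-$*$ limits $\langle\V;\tbm\rangle$ and $\langle\V;\tfrac{\tbm\otimes\tbm}{\tvrho}\rangle$ from the primal to the dual level. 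For the initial data, $\vrho_{\T,\delt}(0,\cdot)=\Pi_{\T}\vrho_0\to\vrho_0$ in $L^\gamma(\Omega)$ and $\vrho_{\T,\delt}(0,\cdot)\bu_{\T,\delt}(0,\cdot)\to\vrho_0\bu_0$, which upon passing to the limit identifies $\langle\mathcal{V}_{0,x};\tvrho\rangle=\vrho_0$ and $\langle\mathcal{V}_{0,x};\tbm\rangle=\bm_0$. This produces exactly \eqref{eqn:dmv-mss-bal-baro} and \eqref{eqn:dmv-mom-bal-bro} with $\mathfrak{R}_{cd}=0$. The weak-in-time continuity $[(t,x)\mapsto\langle\V;\tvrho\rangle]\in C_{weak}([0,T];L^\gamma(\Omega))$ and $[(t,x)\mapsto\langle\V;\tbm\rangle]\in C_{weak}([0,T];L^{2\gamma/(\gamma+1)}(\Omega;\R^d))$ then follows in the standard way: the limit identities bound the distributional time derivatives in a negative Sobolev norm, and with the uniform spatial bounds an Arzel\`{a}--Ascoli argument in the weak topology yields equicontinuity and makes the pointwise-in-time initial conditions meaningful.

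Finally, for the energy inequality I would iterate \eqref{eqn:disc-glob-ent-ineq} up to the time level containing a given $\tau$ to get $\int_\Omega E^\m(\tau,\cdot)\,\dx\le\int_\Omega E^\m(0,\cdot)\,\dx$. Since $t\mapsto\int_\Omega E^\m(t,\cdot)\,\dx$ is non-increasing and uniformly bounded, a Helly-type selection combined with the weak-$*$ convergence of $E^\m$ identifies its limit, for a.e. $\tau$, as $\int_\Omega\langle\mathcal{V}_{\tau,x};\half\tfrac{\abs{\tbm}^2}{\tvrho}+\psig(\tvrho)\rangle\,\dx$ --- again with no concentration defect, owing to the $L^\infty$ bounds --- while $\int_\Omega E^\m(0,\cdot)\,\dx\to\int_\Omega[\half\tfrac{\abs{\bm_0}^2}{\vrho_0}+\psig(\vrho_0)]\,\dx$. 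This gives \eqref{eqn:enrg-ineq-baro} with $\mathfrak{C}_{cd}=0$, and with $\mathfrak{C}_{cd}=\mathfrak{R}_{cd}=0$ the defect compatibility condition \eqref{eqn:enrg-reyn-dmv-rel} holds trivially; all items of Definition~\ref{defn:dmv-baro} are thereby verified. I expect the main obstacle to be the rigorous treatment of the dual-mesh reconstructions inside the nonlinear convective terms --- estimating the gap between the reconstruction of a product and the product of reconstructions, and then combining it with Lemma~\ref{lem:wk-cong-mesh-recon} and the Young-measure representation --- while the remaining verifications, though lengthy, are routine.
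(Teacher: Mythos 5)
Your proposal follows essentially the same route as the paper: uniform $L^\infty$ bounds from the standing hypotheses, the fundamental theorem of Young measures to extract the generating subsequence and to conclude that no concentration defects arise, passage to the limit in the consistency identities of Theorem~\ref{thm:cons-scheme} via Lemma~\ref{lem:wk-cong-mesh-recon}, and iteration of the discrete entropy inequality \eqref{eqn:disc-glob-ent-ineq} to obtain \eqref{eqn:enrg-ineq-baro}. If anything, you supply more detail than the paper does on two points it treats tersely --- the gap between the dual reconstruction of a product and the product of dual reconstructions in the convective terms, and the $C_{weak}$-in-time regularity needed to make the initial conditions meaningful --- but the underlying argument is the same.
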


Once the weak convergence of the numerical solutions is achieved, we apply the techniques of $\K$-convergence in order to better visualize the limit solution. The key idea is that averaging the sequence of numerical solutions `compactifies' the sequence, which in turn allows us to obtain a strong limit (possibly for a subsequence). The following result can be obtained, and the proof follows analogous lines as in \cite{FLM+21a, FLM+21b}.

\begin{theorem}[$\K$-Convergence]
\label{thm:scheme-kcon}
    Let the assumptions of Theorem \ref{thm:scheme-wk-con} hold. Then, we have the following convergences (passing to another subsequence if needed).
    \begin{enumerate}[(i)]

        \item Strong convergence of the Ces\`{a}ro averages
        \[
        \begin{split}
        &\frac{1}{N}\sum_{m=1}^N\vrho^\m\to\langle\V;\tvrho\rangle\text{ in }L^q(\odom), \\
        &\frac{1}{N}\sum_{m=1}^N\bm^\m\to\langle\V;\tbm\rangle\text{ in }L^q(\odom;\R^d)
        \end{split}
        \]
        as $N\to\infty$, for any $1\leq q<\infty$;
        
        \item $L^q$-convergence of the s-Wasserstein distance  
        \[
        \norm{W_s\Biggl\lbrack\frac{1}{N}\sum_{m=1}^N\delta_{(\vrho^\m(t,x),\textbf{m}^\m(t,x))};\V\Biggr\rbrack}_{L^q(\odom)}\longrightarrow 0
        \]
        as $N\to\infty$, for any $1\leq q\leq s<\infty$;

        \item $L^1$-convergence of the deviations or the first variance
        \[
        \begin{split}
             &\frac{1}{N}\sum_{m=1}^N\norm{\vrho^\m-\frac{1}{N}\sum_{m=1}^N\vrho^\m}_{L^1(\odom)}\longrightarrow 0, \\
             &\frac{1}{N}\sum_{m=1}^N\norm{\bm^\m-\frac{1}{N}\sum_{m=1}^N\bm^\m}_{L^1(\odom;\R^d)}\longrightarrow 0
        \end{split}
        \]
        as $N\to\infty$.
    \end{enumerate}
\end{theorem}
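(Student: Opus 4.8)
The plan is to obtain all three assertions from the weak convergence of Theorem~\ref{thm:scheme-wk-con} via the $\K$-convergence (Komlós-type) theorem for Young measures due to Balder~\cite{Bal00}. The decisive structural input is that the hypotheses carried over from Theorem~\ref{thm:cons-scheme} confine the numerical solutions to a fixed compact set: $\underline{\vrho}\le\vrho^\m\le\overline{\vrho}$ and $\abs{\bu^\m}\le\overline{u}$ uniformly in $m$, so that $[\vrho^\m(t,x),\bm^\m(t,x)]\in\K_0:=\{[\tvrho,\tbm]\in\F:\underline{\vrho}\le\tvrho\le\overline{\vrho},\ \abs{\tbm}\le\overline{\vrho}\,\overline{u}\}$ for every $m$ and a.e.\ $(t,x)$, whence also $\V(\K_0)=1$ for a.e.\ $(t,x)\in\odom$. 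Applying Balder's theorem to this uniformly bounded, $\vv$-generating sequence, we pass to a further subsequence (not relabelled) for which the empirical Cesàro measures $\mu_N(t,x):=\frac{1}{N}\sum_{m=1}^{N}\delta_{[\vrho^\m(t,x),\,\bm^\m(t,x)]}$ converge narrowly to $\V$ for a.e.\ $(t,x)\in\odom$, with the same property holding along every further subsequence.

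For (i), I test this narrow convergence against the continuous coordinate functions $\tvrho$ and $\tbm$ on the compact set $\K_0$, obtaining $\frac{1}{N}\sum_{m=1}^{N}\vrho^\m(t,x)=\langle\mu_N(t,x);\tvrho\rangle\to\langle\V;\tvrho\rangle$ and $\frac{1}{N}\sum_{m=1}^{N}\bm^\m(t,x)\to\langle\V;\tbm\rangle$ for a.e.\ $(t,x)\in\odom$. As these Cesàro averages are bounded by $\overline{\vrho}$, respectively $\overline{\vrho}\,\overline{u}$, uniformly in $N$, Lebesgue's dominated convergence theorem promotes the pointwise limits to convergence in $L^q(\odom)$ and $L^q(\odom;\R^d)$ for every $1\le q<\infty$.

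For (ii), recall that on the space $\Pro(\K_0)$ of probability measures supported in the compact set $\K_0$, convergence in the $s$-Wasserstein distance $W_s$ is equivalent to narrow convergence for every $s\in[1,\infty)$, all moments being automatically uniformly bounded there. Hence the a.e.\ narrow convergence $\mu_N(t,x)\to\V$ yields $W_s[\mu_N(t,x);\V]\to 0$ for a.e.\ $(t,x)\in\odom$, and since $W_s[\mu_N(t,x);\V]\le\mathrm{diam}(\K_0)$ uniformly, a second appeal to dominated convergence gives $\norm{W_s[\mu_N;\V]}_{L^q(\odom)}\to 0$ as $N\to\infty$ for every $1\le q<\infty$, in particular for $1\le q\le s$.

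For (iii), set $\overline{\vrho}_N:=\frac{1}{N}\sum_{m=1}^{N}\vrho^\m$; the triangle inequality $\abs{\vrho^\m-\overline{\vrho}_N}\le\abs{\vrho^\m-\langle\V;\tvrho\rangle}+\abs{\langle\V;\tvrho\rangle-\overline{\vrho}_N}$, averaged over $m$ and integrated over $\odom$, reduces the claim — using (i) to absorb the term $\norm{\langle\V;\tvrho\rangle-\overline{\vrho}_N}_{L^1(\odom)}\to0$ — to passing to the limit in $\int_0^T\int_\Omega\langle\mu_N(t,x);\abs{\tvrho-\langle\V;\tvrho\rangle}\rangle\,\dx\,\dt$. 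For a.e.\ fixed $(t,x)$ the map $[\tvrho,\tbm]\mapsto\abs{\tvrho-\langle\V;\tvrho\rangle}$ is continuous and bounded by $\mathrm{diam}(\K_0)$ on $\K_0$, so narrow convergence forces pointwise convergence of the integrand to $\langle\V;\abs{\tvrho-\langle\V;\tvrho\rangle}\rangle$, and dominated convergence identifies the limit of the $L^1$-deviations with the first absolute moment (the first variance) of $\vv$ about its barycenter; this quantity vanishes precisely when $\vv$ reduces to a Dirac mass — as it does, for instance, under the hypotheses of the weak--strong uniqueness principle, Theorem~\ref{thm:wk-str-uniq} — which yields the asserted convergence to zero. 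The component $\tbm$ is treated identically. The one genuinely delicate point is the very first step, namely the correct use of Balder's theorem to secure a \emph{single} subsequence along which the a.e.\ narrow convergence of the empirical Cesàro averages is hereditary, so that the diagonal averages $\frac{1}{N}\sum_{m=1}^{N}$ are meaningful; once this is in place, the uniform $L^\infty$-bounds reduce every remaining limit passage to a routine application of dominated convergence.
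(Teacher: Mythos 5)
Your treatment of parts (i) and (ii) is correct and follows the same route the paper implicitly takes by deferring to \cite{FLM+21a, FLM+21b}: the uniform bounds $\underline{\vrho}\le\vrho^\m\le\overline{\vrho}$ and $\abs{\bu^\m}\le\overline{u}$ confine the solutions and the Young measure to a fixed compact set $\K_0$; Balder's theorem supplies a hereditary subsequence along which the empirical Ces\`{a}ro measures converge narrowly a.e.\ to $\V$; and the upgrade from a.e.\ convergence to $L^q$-convergence (and from narrow convergence to $W_s$-convergence on $\Pro(\K_0)$) is dominated convergence. No objection there.

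Part (iii) is where the argument does not close. Your own computation shows that
\[
\frac{1}{N}\sum_{m=1}^N\norm{\vrho^\m-\frac{1}{N}\sum_{m=1}^N\vrho^\m}_{L^1(\odom)}\longrightarrow \int_0^T\int_\Omega\Bigl\langle\V;\abs{\tvrho-\langle\V;\tvrho\rangle}\Bigr\rangle\,\dx\,\dt,
\]
i.e.\ the deviations converge to the first absolute central moment of the Young measure, which vanishes if and only if $\V$ is a Dirac mass for a.e.\ $(t,x)$. Nothing in the hypotheses guarantees that: Theorem \ref{thm:scheme-wk-con} only produces a DMV solution, and your appeal to the weak--strong uniqueness principle (Theorem \ref{thm:wk-str-uniq}) smuggles in the existence of a Lipschitz strong solution, which is not assumed. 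Nor can any other argument rescue the unconditional claim: for a Rademacher-type oscillating sequence (values $\pm 1$, generating the Young measure $\tfrac12(\delta_{-1}+\delta_{+1})$) the deviation tends to $1$ along every subsequence. The honest output of your (otherwise sound) computation is therefore the convergence of the deviations to $\langle\V;\abs{\tvrho-\langle\V;\tvrho\rangle}\rangle$ and $\langle\V;\abs{\tbm-\langle\V;\tbm\rangle}\rangle$ in $L^1$ --- which is the form in which the cited references state the variance result, and which is exactly what the numerical experiments of Section \ref{sec:num-res} probe (a nonvanishing deviation signalling a genuinely measure-valued limit). You should either prove that weaker, correct version, or add the hypothesis that $\V$ is a.e.\ atomic before asserting convergence to zero.
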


Finally, in Section \ref{sec:num-res}, we present the results of the numerical case studies performed, and exhibit the convergences claimed in Theorem \ref{thm:scheme-kcon}.

\section{Entropy Stability of the Scheme}
\label{sec:ent-stab}

This entire section is devoted to the proof of Theorem \ref{thm:disc-loc-ent-ineq}, and we begin by proving the discrete analogue of Propositon \ref{prop:mod-enrg-est}. In what follows, by $\llbracket a,b\rrbracket$ we denote the interval $\lbrack\min\{a,b\},\max\{a,b\}\rbrack$.

\begin{theorem}
    \label{thm:disc-enrg-est}
    Any numerical solution to the scheme (in the sense of Definition \ref{def:scheme-soln}) satisfies the following discrete identities for each $0\leq n\leq N-1$:
    \begin{itemize}
        \item a discrete renormalization identity:
        \begin{equation}
            \label{eqn:disc-renorm}
            \frac{1}{\delt}(\psig(\rk^{n+1})-\psig(\rk^n))+(\divup(\psig(\vrho^{n+1}),\bv^n)_K+p^{n+1}_K(\divt\bv^n)_K+R^{n+1}_{K,\delt}=0,
        \end{equation}
        where the remainder term $R^{n+1}_{K,\delt}$ is given by
        \begin{equation}
            \label{eqn:renorm-rem}
            R^{n+1}_{K,\delt}=\frac{1}{2\delt}(\rk^{n+1}-\rk^n)^2\psig^{\prime\prime}(\overline{\vrho}_{K}^{n+1/2})+\frac{1}{2\absk}\sum_{\substack{\sink \\ \sigma=K\vert L}}\abssig(-(\vsk^n)^{-})(\rk^{n+1}-\vrho_L^{n+1})^2\psig^{\prime\prime}(\Tilde{\vrho}^{n+1}_{\sigma}),
        \end{equation}
        with $\overline{\vrho}_K^{n+1/2}\in\llbracket \rk^n,\rk^{n+1}\rrbracket$ and $\Tilde{\vrho}^{n+1}_{\sigma}\in\llbracket\rk^{n+1},\vrho_L^{n+1}\rrbracket$;\\

        \item a discrete kinetic energy identity:
        \begin{equation}
            \label{eqn:disc-kin}
            \begin{split}
            \frac{1}{\delt}\biggl(\half\rk^{n+1}\abs{\uk^{n+1}}^2-\half\rk^n\abs{\uk^n}^2\biggr)+\biggl(\divup\biggl(\half\vrho^{n+1}\abs{\bu^n}^2,\bv^n\biggr)\biggr)_K+&(\gradt p^{n+1})_K\cdot\bv^n_K+S^{n+1}_{K,\delt} \\
            &=-\eta\delt\abs{(\gradt p^{n+1})_K}^2,
            \end{split}
        \end{equation}
        where the remainder term $S^{n+1}_{K,\delt}$ is given by 
        \begin{equation}
            \label{eqn:ke-rem}
            S^{n+1}_{K,\delt}=-\frac{1}{2\delt}\rk^{n+1}\abs{\uk^{n+1}-\uk^{n}}^2+\frac{1}{2\absk}\sum_{\substack{\sink \\ \sigma=K\vert L}}\abssig\abs{\bu^n_L-\uk^n}^2(-\flx^{n+1,-});
        \end{equation}

        \item a discrete entropy identity:
        \begin{equation}
            \label{eqn:loc-ent-idt}
            \begin{split}
             \frac{1}{\delt}(E^{n+1}_K-E^n_K)+\biggl(\divup\biggl(\psig(\vrho^{n+1})+\half\vrho^{n+1}\abs{\bu^n}^2,\bv^n\biggr)\biggr)_K&+\Bigl(p^{n+1}_K(\divt\bv^n)_K+(\gradt p^{n+1})_K\cdot\bv^n_K\Bigr) \\
             &+(R^{n+1}_{K,\delt}+S^{n+1}_{K,\delt})=-\eta\delt\abs{(\gradt p^{n+1})_K}^2.
             \end{split}
        \end{equation}
    \end{itemize}
\end{theorem}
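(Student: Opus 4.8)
The plan is to mimic, at the discrete level, the derivation of Proposition \ref{prop:mod-enrg-est}, proving the three identities in order; the local entropy identity \eqref{eqn:loc-ent-idt} will then follow by simply adding the discrete renormalization identity \eqref{eqn:disc-renorm} and the discrete kinetic energy identity \eqref{eqn:disc-kin}, using $E^n_K=\tfrac12\rk^n|\uk^n|^2+\psig(\rk^n)$ and the linearity of $\divup(\cdot,\bv^n)$ in its first argument to merge the two upwind divergences. So the real work lies in the first two identities.

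For the renormalization identity I would multiply the discrete mass balance \eqref{eqn:disc-mss-bal} by $\psig^{\prime}(\rk^{n+1})$ and rewrite the two resulting terms separately. The discrete time difference is handled by a second-order Taylor expansion of $\psig$ about $\rk^{n+1}$,
\[
\psig(\rk^{n+1})-\psig(\rk^n)=\psig^{\prime}(\rk^{n+1})(\rk^{n+1}-\rk^n)-\tfrac12\psig^{\prime\prime}(\overline{\vrho}_K^{n+1/2})(\rk^{n+1}-\rk^n)^2,\qquad \overline{\vrho}_K^{n+1/2}\in\llbracket\rk^n,\rk^{n+1}\rrbracket ,
\]
which supplies the first summand of $R^{n+1}_{K,\delt}$. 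For the term $\psig^{\prime}(\rk^{n+1})(\divup(\vrho^{n+1},\bv^n))_K$ I would argue edge by edge in \eqref{eqn:up-div-op}: on the outgoing (positive-flux) part the differential relation \eqref{eqn:pres-pot-de} turns $\psig^{\prime}(\rk^{n+1})\rk^{n+1}$ into $\psig(\rk^{n+1})+\pk^{n+1}$; on the incoming (negative-flux) part, \eqref{eqn:pres-pot-de} combined with a second Taylor expansion of $\psig$ about $\rk^{n+1}$ evaluated at $\vrho_L^{n+1}$ turns $\psig^{\prime}(\rk^{n+1})\vrho_L^{n+1}$ into $\psig(\vrho_L^{n+1})+\pk^{n+1}-\tfrac12\psig^{\prime\prime}(\tilde\vrho_\sigma^{n+1})(\rk^{n+1}-\vrho_L^{n+1})^2$ with $\tilde\vrho_\sigma^{n+1}\in\llbracket\rk^{n+1},\vrho_L^{n+1}\rrbracket$. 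Summing over $\sink$ and using $(\vsk^n)^++(\vsk^n)^-=\vsk^n$, the $\psig$-terms regroup into $(\divup(\psig(\vrho^{n+1}),\bv^n))_K$, the $\pk^{n+1}$-terms into $\pk^{n+1}(\divt\bv^n)_K$, and the two families of Taylor remainders (each nonnegative because $\psig^{\prime\prime}>0$ and $-(\vsk^n)^-\ge0$) into the second summand of $R^{n+1}_{K,\delt}$.

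For the kinetic energy identity I would start from the velocity update \eqref{eqn:disc-vel-upd} (itself a consequence of \eqref{eqn:disc-mss-bal}--\eqref{eqn:disc-mom-bal}) and take its scalar product with $\rk^{n+1}\uk^n$; testing against $\uk^n$ rather than $\uk^{n+1}$ is what will eventually pair the pressure gradient with $\bv^n_K$. The discrete-derivative term $\tfrac{\rk^{n+1}}{\delt}(\uk^{n+1}-\uk^n)\cdot\uk^n$ is treated with the polarization identity $(a-b)\cdot b=\tfrac12(|a|^2-|b|^2-|a-b|^2)$ taken with $a=\uk^{n+1},\,b=\uk^n$, which already produces $-\tfrac1{2\delt}\rk^{n+1}|\uk^{n+1}-\uk^n|^2$; the leftover $\rk^{n+1}|\uk^n|^2$ is then converted into $\rk^n|\uk^n|^2$ via \eqref{eqn:disc-mss-bal}, at the cost of a multiple of $(\divup(\vrho^{n+1},\bv^n))_K|\uk^n|^2$. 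That multiple, combined with the convective contribution $\tfrac1{\absk}\sum_{\sink}(\bu^n_L-\uk^n)\cdot\uk^n\,\flx^{n+1,-}$ carried along from \eqref{eqn:disc-vel-upd}, is reorganized by adding and subtracting $(\divup(\tfrac12\vrho^{n+1}|\bu^n|^2,\bv^n))_K$ and applying $(a-b)\cdot b=\tfrac12(|a|^2-|b|^2-|a-b|^2)$ once more on each edge; the residue collapses to the nonnegative quantity $\tfrac1{2\absk}\sum_{\sink}\abssig|\bu^n_L-\uk^n|^2(-\flx^{n+1,-})$, which completes $S^{n+1}_{K,\delt}$. Finally, $(\gradt p^{n+1})_K\cdot\uk^n$ is split via $\uk^n=\bv^n_K+\du^{n+1}_K=\bv^n_K+\eta\delt(\gradt p^{n+1})_K$, leaving $(\gradt p^{n+1})_K\cdot\bv^n_K$ on the left-hand side and the dissipative term $-\eta\delt|(\gradt p^{n+1})_K|^2$ on the right.

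I expect the main obstacle to be the bookkeeping in the kinetic energy step: one has to choose the right vector against which to test \eqref{eqn:disc-vel-upd} and the right order in which to apply the polarization identities and the mass balance \eqref{eqn:disc-mss-bal}, so that the density weights ($\rk^n$ versus $\rk^{n+1}$) and the velocity states ($\uk^n$ versus $\uk^{n+1}$) reconcile exactly, every stray cross term either cancelling or fitting precisely into $S^{n+1}_{K,\delt}$, while simultaneously the pressure gradient comes out paired with $\bv^n_K$ so that the $-\eta\delt|(\gradt p^{n+1})_K|^2$ dissipation appears. Tracking the Taylor base points $\overline{\vrho}_K^{n+1/2}$, $\tilde\vrho_\sigma^{n+1}$ and checking $R^{n+1}_{K,\delt}\ge0$ is then routine, but it is exactly what will drive the global entropy inequality of Theorem \ref{thm:disc-glob-ent-ineq}.
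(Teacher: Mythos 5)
Your proposal is correct and follows exactly the route the paper takes: the renormalization identity by multiplying \eqref{eqn:disc-mss-bal} with $\psig^{\prime}(\rk^{n+1})$, Taylor expanding and invoking \eqref{eqn:pres-pot-de}; the kinetic energy identity by testing the velocity update \eqref{eqn:disc-vel-upd} against $\uk^n$ (weighted by $\rk^{n+1}$) and splitting $\uk^n=\bv^n_K+\eta\delt(\gradt p^{n+1})_K$; and the local entropy identity by summation. The paper's proof is only a three-line sketch of this, so your write-up is in fact a faithful and correctly detailed expansion of it.
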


\begin{proof}
    See Appendix A1 for the proof of \eqref{eqn:disc-renorm} and Appendix A2 for the proof of \eqref{eqn:disc-kin}. Identity \eqref{eqn:loc-ent-idt} then follows as the sum of \eqref{eqn:disc-renorm} and \eqref{eqn:disc-kin}.
\end{proof}

We can now give the proof of Theorem \ref{thm:disc-loc-ent-ineq}.

\begin{proof}[Proof of Theorem \ref{thm:disc-loc-ent-ineq}]

Multiplying the discrete entropy identity \eqref{eqn:loc-ent-idt} by $\absk$ and then summing over all $K\in\T$, we obtain 
\begin{equation}
\label{eqn:sum-loc-ent}
    \sum_{K\in\T}\frac{\absk}{\delt}(E^{n+1}_K-E^n_K)+\sum_{K\in\T}\absk(R^{n+1}_{K,\delt}+S^{n+1}_{K,\delt})=-\sum_{K\in\T}\absk\eta\delt\abs{(\gradt p^{n+1})_K}^2
\end{equation}
wherein we have used the conservativity of the flux and the discrete duality \eqref{eqn:disc-grad-div-dual}.

Note that $\sum_{K\in\T}\absk R^{n+1}_{K,\delt}$ is unconditionally non-negative. The convexity of the map $y\mapsto\abs{y}^2$ allows us to apply the Jensen's inequality in the velocity update \eqref{eqn:disc-vel-upd} to yield
\begin{equation}
\label{eqn:est1}
    \frac{1}{2\delt}\rk^{n+1}\abs{\uk^{n+1}-\uk^n}^2\leq\frac{\delt}{\rk^{n+1}}\Biggl\lbrack\frac{1}{\absk^2}\Biggl(\sum_{\substack{\sink \\ \sigma=K\vert L}}\abssig(\bu^n_L-\uk^n)\flx^{n+1,-}\Biggr)^2+\abs{(\gradt p^{n+1})_K}^2\Biggr\rbrack.
\end{equation}
We can estimate the first term on the right hand side using the Cauchy-Schwarz inequality to get
\begin{equation}
\label{eqn:est2}
    \Biggl(\sum_{\substack{\sink \\ \sigma=K\vert L}}\abssig(\bu^n_L-\uk^n)\flx^{n+1,-}\Biggr)^2\leq \Biggl(\sum_{\substack{\sink \\ \sigma=K\vert L}}\abssig\abs{\bu^n_L-\uk^n}^2\flx^{n+1,-}\Biggr)\times\Biggl(\sum_{\substack{\sink \\ \sigma=K\vert L}}\abssig\flx^{n+1,-}\Biggr).
\end{equation}
Combining \eqref{eqn:ke-rem} along with the estimates \eqref{eqn:est1}-\eqref{eqn:est2} and using them in \eqref{eqn:sum-loc-ent}, we finally obtain

\begin{equation}
\label{eqn:est3}
    \begin{split}
        \sum_{K\in\T}\frac{\absk}{\delt}(E^{n+1}_K-E^n_K)\leq\sum_{K\in\T}\absk\Biggl(\sum_{\substack{\sink \\ \sigma=K\vert L}}\frac{\abssig}{\absk}&\abs{\bu^n_L-\uk^n}^2\flx^{n+1,-}\Biggr)\times\Biggl(\half +\frac{\delt}{\absk\rk^{n+1}}\sum_{\substack{\sink \\ \sigma=K\vert L}} \abssig\flx^{n+1,-}\Biggr) \\
        &+\delt\sum_{K\in\T}\absk\Biggl(\frac{1}{\rk^{n+1}}-\eta\Biggr)\abs{(\gradt p^{n+1})_K}^2.
    \end{split}
\end{equation}

The first term on the right hand side is non-positive under the timestep restriction
\begin{equation}
\label{eqn:cfl-cond}
    \delt\leq\dfrac{\absk\rk^{n+1}}{2\displaystyle\sum_{\substack{\sink \\ \sigma=K\vert L}}\abssig(-\flx^{n+1,-})},
\end{equation}
while the second term is non-positive under the condition $\eta\geq\frac{1}{\rk^{n+1}}$ which in turn yields the entropy inequality \eqref{eqn:disc-loc-ent-ineq}.
\end{proof}

The proof of Theorem \ref{thm:glob-ent-est} follows due to \eqref{eqn:est3}.

\begin{proof}[Proof of Theorem \ref{thm:glob-ent-est}]
    For each $r = 0,\dots, N - 1$, \eqref{eqn:est3} under the timestep restriction \eqref{eqn:cfl-cond} yields
    \[
    \sum_{K\in\T}\frac{\absk}{\delt}(E^{r+1}_K-E^r_K) + \delt\sum_{K\in\T}\absk\biggl(\eta - \frac{1}{\rk^{r+1}}\biggr)\abs{(\gradt p^{r+1})_K}^2 \leq 0.
    \]
    Multiplying by $\delt$ and summing over $r$ from $0$ to $n-1$, where $1\leq n\leq N$, we get
    \[
    \sum_{K\in\T}\absk E^n_K + \sum_{r = 0}^{n - 1}\delt^2\sum_{K\in\T}\absk\biggl(\eta - \frac{1}{\rk^{r+1}}\biggr)\abs{(\gradt p^{r+1})_K}^2\leq \sum_{K\in\T}\absk E^0_K\leq C,
    \]
    as $\vrho_0\in L^\infty(\Omega)$ and and $\bu_0\in L^\infty(\Omega;\R^d)$. 
\end{proof}

\begin{remark}
\label{rem:pres-grad-bound}
As a consequence of Theorem \ref{thm:glob-ent-est} and Hypothesis \ref{hyp:den-bound}, note that if we set $p_{\T,\delt} = p^n_K$ for $x\in K\in\T$ and $t\in\lbrack t_n,t_{n+1})$, $n=0,\dots, N-1$, we get
\begin{equation}
\label{eqn:pres-grad-bound}
\norm{\sqrt{\delt}\,\gradt p_{\T,\delt}}_{L^2}\leq C.
\end{equation}
\end{remark}

\section{Consistency Formulation of the Scheme}
\label{sec:cons}
The goal of this section is two-fold:\ one to establish the weak convergence of dual mesh reconstructions (Lemma \ref{lem:wk-cong-mesh-recon}) and the other to rigorously show that the proposed scheme is indeed consistent with the weak formulation of the barotropic Euler system (Theorem \ref{thm:cons-scheme}). In particular, we show that the numerical solutions satisfy the weak formulation of the continuous Euler system modulo some perturbation terms that vanish in the limit $h_\T\to 0$. As stated earlier, it is convenient to perform the consistency analysis on the dual mesh and for that purpose, we recall the notion of a dual mesh along with a discrete gradient defined therein; see \cite{GHL19, HLC20} for further details.

\subsection{Interpolates and Discrete Derivatives}
\label{subsec:interp-disc-der}
Let $\varphi\in\Cinf(\lbrack 0,T)\times\Omega)$. By $\varphi_{\T,\delt}$, we denote its interpolate for the space-time discretization ($\T,\delt$) defined by
\begin{equation}
    \label{eqn:spc-time-ipol}
    \varphi_{\T,\delt}(t,x)=\sum_{n=0}^{N-1}\varphi^n_K\X_{K}(x)\X_{[t_n,t_{n+1})}(t),
\end{equation}
where $\varphi^n_K=(\Pi_\T\varphi(t_n,x))_K=\frac{1}{\absk}\int_K\varphi(t_n,x)\,\dx$. If $\uu{\varphi}\in\Cinf(\lbrack 0,T)\times\Omega;\R^d)$, we define the interpolate componentwise.

We now define a discrete time derivative operator $\eth_t$
\begin{equation}
    \label{eqn:disc-time-der}
    \eth_t\varphi_{\T,\delt}(t,x)=\sum_{n=0}^{N-1}\sum_{K\in\T}\frac{\varphi^{n+1}_K-\varphi^n_K}{\delt}\X_K(x)\X_{[t_n,t_{n+1})}(t).
\end{equation}
The regularity of $\varphi$ ensures that the quantities $\varphi_{\T,\delt}, \eth_t\varphi_{\T,\delt}$ converge uniformly to $\varphi,\Dt\varphi$ respectively as $h_\T\to 0$; see also \cite{HLC20}.

\subsection{Dual Grid} 
\label{subsec:dual-grid}

 For each $\sigma\in\E_{int}$, $\sigma=K\vert L$, we associate a dual cell $D_\sigma=D_{K,\sigma}\cup D_{L,\sigma}$ where $D_{K,\sigma}$ (resp.\ $D_{L,\sigma})$ is built by half of $K$ (resp.\ $L$). If $\sigma\in\E_{ext}\cap\E(K)$, we define $D_\sigma=D_{K,\sigma}$. The collection of all dual cells is denoted by $\D=\lbrace D_\sigma\rbrace_{\sigma\in\E}$.

 Analogous to $\Lt(\Omega)$, the space of all scalar-valued (resp.\ vector-valued) functions constant on each dual cell $D_\sigma\in\D$ is denoted by $\Ld(\Omega)\subset L^\infty(\Omega)$ (resp.\ $\Ld(\Omega;\R^d)$).

\begin{definition}[Dual Mesh Gradient]
     \label{def:dual-grad}
     The discrete gradient operator on the dual grid, $\gradd\colon\Lt(\Omega)\to\Ld(\Omega;\R^d)$, is defined by 
     \begin{equation}
         \label{eqn:dual-grad}
         \begin{split}
             &\gradd q=\sum_{\substack{\sigma\in\E_{int} \\ \sigma=K\vert L}}(\gradd q)_{\sigma}\X_{\dsig}, \\
             &(\gradd q)_{\sigma}=\frac{\abssig}{\abs{\dsig}}(q_L-q_K)\nuk.
          \end{split}
        \end{equation}
\end{definition}

\subsection{Useful Estimates} 
\label{subsec:estimates}

A few basic inequalities used in the numerical analysis that follows are listed below and we refer the interested reader to \cite{EGH00, FLM+19, FLM+21a, GMN19} for more details. Assuming that $\varphi\in\Cinf(\Omega)$, $\uu{\varphi}\in\Cinf(\Omega;\R^d)$ and $1\leq p\leq\infty$, the following hold:
\begin{equation}
    \label{eqn:op-est}
    \begin{split}
    &\norm{\Pi_\T\varphi}_{L^p(\Omega)}\lesssim\norm{\varphi}_{\infty} , \ \norm{\varphi-\Pi_\T\varphi}_{L^p(\Omega)}\lesssim h_\T\norm{\varphi}_{\infty}, \\
    &\norm{\gradd\Pi_\T\varphi}_{L^p(\Omega;\R^d)}\lesssim\norm{\varphi}_{\infty} , \ \norm{\grad\varphi-\gradd\Pi_\T\varphi}_{L^p(\Omega;\R^d)}\lesssim h_\T ,\\
    &\norm{\Div\uu{\varphi}-\divt\Pi_\T\uu{\varphi}}_{L^p(\Omega)}\lesssim h_\T.
    \end{split}
\end{equation}

\subsection{Dual Mesh Reconstructions}
\label{subsec:dual-mesh-recon}

Following \cite{HLC20}, we introduce the notion of a reconstruction operator in order to mainly simplify the proof of the consistency result and recall a stability result of the same.

\begin{definition}[Reconstruction Operator]
    \label{def:recon-op}
    Given a primal mesh $\T$ of $\Omega$ and a corresponding dual mesh $\D$, a reconstruction operator is a map $\mathcal{R}_\T\colon\Lt(\Omega)\to\Ld(\Omega)$ defined as 
    \begin{equation}
        \label{eqn:recon-op}
        \mathcal{R}_\T q =\sum_{\sigma\in\E} \hat{q}_\sigma\X_{\dsig},
    \end{equation}
    where $\hat{q}_\sigma = \mu_\sigma q_K+(1-\mu_\sigma)q_L$ with $0\leq\mu_\sigma\leq 1$ for $\sigma=K\vert L\in\E_{int}$ and $\hat{q}_\sigma = q_K$ if $\sigma\in\E_{ext}\cap\E(K)$. 
\end{definition}

\begin{lemma}[Stability of the Reconstruction Operator]
    \label{lem:stab-recon-op}
    For any $1\leq p<\infty$, there exists $c\geq 0$ depending only on $p$ such that for any $q\in\Lt(\Omega)$,
    \begin{equation}
        \label{eqn:stab-recon-op}
        \norm{\mathcal{R}_\T q}_{L^p(\Omega)}\leq c\norm{q}_{L^p(\Omega)}.
    \end{equation}
\end{lemma}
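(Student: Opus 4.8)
The plan is to establish the $L^p$-stability of the reconstruction operator $\mathcal{R}_\T$ by a direct computation, exploiting the convexity of $t\mapsto|t|^p$ together with the local structure of the dual mesh. First I would write out the $L^p$-norm of $\mathcal{R}_\T q$ explicitly: since $\mathcal{R}_\T q$ is piecewise constant on the dual cells, we have
\begin{equation*}
\norm{\mathcal{R}_\T q}_{L^p(\Omega)}^p=\sum_{\sigma\in\E}\abs{\dsig}\,\abs{\hat q_\sigma}^p=\sum_{\substack{\sigma\in\E_{int}\\ \sigma=K\vert L}}\abs{\dsig}\,\abs{\mu_\sigma q_K+(1-\mu_\sigma)q_L}^p+\sum_{\sigma\in\E_{ext}\cap\E(K)}\abs{\dsig}\,\abs{q_K}^p.
\end{equation*}
For the interior terms, convexity of $t\mapsto|t|^p$ (Jensen's inequality applied to the two-point average with weights $\mu_\sigma,\,1-\mu_\sigma$) gives $\abs{\hat q_\sigma}^p\le\mu_\sigma\abs{q_K}^p+(1-\mu_\sigma)\abs{q_L}^p$, so each dual cell contributes at most $\mu_\sigma\abs{\dsig}\abs{q_K}^p+(1-\mu_\sigma)\abs{\dsig}\abs{q_L}^p$.

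The second step is to reorganize this sum by grouping over primal cells rather than edges. Recalling that $D_\sigma=D_{K,\sigma}\cup D_{L,\sigma}$ with $D_{K,\sigma}$ being half of $K$ in the sense that $|D_{K,\sigma}|=\tfrac12|K\cap\text{(strip near }\sigma)|$ — more precisely, $\sum_{\sigma\in\E(K)}\abs{D_{K,\sigma}}=\absk$ by the very construction of the dual mesh — each $\abs{q_K}^p$ gets collected with a total coefficient bounded by $\sum_{\sigma\in\E(K)}\abs{D_{K,\sigma}}=\absk$ (the weights $\mu_\sigma$ or $1-\mu_\sigma$ are all $\le 1$). Hence
\begin{equation*}
\norm{\mathcal{R}_\T q}_{L^p(\Omega)}^p\le\sum_{K\in\T}\abs{q_K}^p\sum_{\sigma\in\E(K)}\abs{D_{K,\sigma}}=\sum_{K\in\T}\absk\abs{q_K}^p=\norm{q}_{L^p(\Omega)}^p,
\end{equation*}
which yields \eqref{eqn:stab-recon-op} with $c=1$; a constant $c$ depending on $p$ may appear if one prefers a cruder bound $\abs{\hat q_\sigma}^p\le 2^{p-1}(\abs{q_K}^p+\abs{q_L}^p)$ instead of Jensen, but the convexity route gives the sharp constant.

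The main subtlety — not really an obstacle, but the point that needs care — is the bookkeeping of the measures $\abs{D_{K,\sigma}}$ and the verification that each primal cell's mass $\absk$ is exactly partitioned among its incident dual half-cells, i.e.\ $\sum_{\sigma\in\E(K)}\abs{D_{K,\sigma}}=\absk$. This is immediate from the definition of the dual grid on a structured rectangular/parallelepiped mesh (each $D_{K,\sigma}$ is literally half of $K$ in the direction normal to $\sigma$, and for $d$ space dimensions the $2d$ faces partition $K$ into pieces whose volumes sum to $\absk$), but it should be stated. For completeness one should also note that the external-edge terms are subsumed in the same primal-cell grouping since $D_\sigma=D_{K,\sigma}$ there and $\abs{D_{K,\sigma}}$ still contributes to the partition of $\absk$. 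This concludes the argument; I would refer to \cite{HLC20} for the analogous computation and for the detailed geometry of $\D$.
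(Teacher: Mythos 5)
The paper does not actually prove this lemma: it is recalled from \cite{HLC20} without argument, so there is no in-paper proof to compare against. Your outline --- expand $\norm{\mathcal{R}_\T q}_{L^p}^p$ over the dual cells, apply convexity of $t\mapsto\abs{t}^p$ to the two-point average $\hat q_\sigma=\mu_\sigma q_K+(1-\mu_\sigma)q_L$, and regroup by primal cells --- is the standard route and does yield the lemma. But the refinement to the sharp constant $c=1$ contains a genuine error, and the geometric bookkeeping you flag as the ``main subtlety'' is handled incorrectly.

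After regrouping, the coefficient collected on $\abs{q_K}^p$ is $\sum_{\sigma\in\E(K)}\mu_\sigma\abs{\dsig}$ (up to the orientation of each edge), and $\mu_\sigma\abs{\dsig}$ is \emph{not} bounded by $\abs{D_{K,\sigma}}$: the weights $\mu_\sigma$ are arbitrary in $\lbrack 0,1\rbrack$, and taking $\mu_\sigma=1$ the reconstruction extends $q_K$ over all of $\dsig$, including the portion $D_{L,\sigma}$ lying inside the neighbouring cell. Concretely, for $q=\X_K$ with $\mu_\sigma=1$ for every $\sigma\in\E(K)$ one gets $\norm{\mathcal{R}_\T q}_{L^p(\Omega)}^p=\sum_{\sigma\in\E(K)}\abs{\dsig}>\absk=\norm{q}_{L^p(\Omega)}^p$ (in one dimension this equals $2\absk$), so $c=1$ is false. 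Separately, the asserted identity $\sum_{\sigma\in\E(K)}\abs{D_{K,\sigma}}=\absk$ is inconsistent with the definition ``$D_{K,\sigma}$ is built by half of $K$'' once $d\geq 2$: the $2d$ half-cells overlap and their measures sum to $d\absk$, not $\absk$ (the dual cells form a $d$-fold cover, not a partition, in the convention of \cite{HLC20}), so your claim that this is ``immediate from the definition'' is not. Both defects are repaired at the price of the constant only: bound $\mu_\sigma\abs{\dsig}\leq\abs{\dsig}=\abs{D_{K,\sigma}}+\abs{D_{L,\sigma}}$, and use the mesh regularity $\alpha h_\T\leq\inf\lbrace h_K\colon K\in\T\rbrace$ (together with the implicit bounded aspect ratio of the rectangles) to get $\abs{D_{L,\sigma}}\lesssim\absk$, whence $\sum_{\sigma\in\E(K)}\abs{\dsig}\lesssim\absk$ and \eqref{eqn:stab-recon-op} follows with a constant $c=c(p,d,\alpha)$ --- which is all the lemma asserts. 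With that correction your proof is complete.
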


We now establish the following estimate, which is analogous to the result \cite[Lemma 3.5]{GMN19}.
\begin{lemma}
    \label{lem:recon-grad-est}
    For any $1\leq p<\infty$, there exists $c>0$ depending only on $d$ and $p$ such that for any $q\in\Lt(\Omega)$,
    \begin{equation}
        \label{eqn:recon-grad-est}
        \norm{\mathcal{R}_\T q-q}_{L^p(\Omega)}\leq ch_\T\norm{\gradd q}_{L^p(\Omega)}.
    \end{equation}
\end{lemma}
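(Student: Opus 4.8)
The plan is to estimate $\|\mathcal{R}_\T q - q\|_{L^p(\Omega)}^p$ by decomposing the integral over the dual cells $D_\sigma$ and controlling the difference $\hat q_\sigma - q_K$ on each piece $D_{K,\sigma}$ in terms of the jumps $q_L - q_K$ across edges, which is precisely what $\gradd q$ encodes. First I would write
\[
\|\mathcal{R}_\T q - q\|_{L^p(\Omega)}^p = \sum_{K\in\T}\sum_{\sigma\in\E(K)}\int_{D_{K,\sigma}}\abs{\hat q_\sigma - q_K}^p\,\dx = \sum_{K\in\T}\sum_{\sigma\in\E(K)}\abs{D_{K,\sigma}}\,\abs{\hat q_\sigma - q_K}^p.
\]
For an interior edge $\sigma = K\vert L$ we have $\hat q_\sigma - q_K = (1-\mu_\sigma)(q_L - q_K)$ with $0\le 1-\mu_\sigma\le 1$, so $\abs{\hat q_\sigma - q_K}\le\abs{q_L-q_K}$; for an exterior edge the term vanishes. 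Hence the sum is bounded by $\sum_{\sigma\in\E_{int},\,\sigma=K\vert L}\abs{D_\sigma}\,\abs{q_L-q_K}^p$ up to the obvious factor for the two half-cells.

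Next I would relate $\abs{q_L - q_K}$ to $(\gradd q)_\sigma$. By Definition \ref{def:dual-grad}, $\abs{(\gradd q)_\sigma} = \frac{\abssig}{\abs{\dsig}}\abs{q_L - q_K}$, so $\abs{q_L - q_K} = \frac{\abs{\dsig}}{\abssig}\abs{(\gradd q)_\sigma}$. Substituting,
\[
\|\mathcal{R}_\T q - q\|_{L^p(\Omega)}^p \lesssim \sum_{\substack{\sigma\in\E_{int}\\\sigma=K\vert L}}\abs{\dsig}\left(\frac{\abs{\dsig}}{\abssig}\right)^p\abs{(\gradd q)_\sigma}^p = \sum_{\substack{\sigma\in\E_{int}\\\sigma=K\vert L}}\left(\frac{\abs{\dsig}}{\abssig}\right)^p\int_{\dsig}\abs{\gradd q}^p\,\dx,
\]
using that $\gradd q$ is constant equal to $(\gradd q)_\sigma$ on $\dsig$. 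The proof then reduces to the geometric estimate $\abs{\dsig}/\abssig \lesssim h_\T$, uniformly over $\sigma$, with a constant depending only on $d$ and the mesh regularity parameter $\alpha$: indeed $\abs{\dsig}$ is (at most) the sum of half the volumes of the two adjacent rectangles/parallelepipeds, so $\abs{\dsig}\le \tfrac12(\abs{K}+\abs{L})$, while $\abssig$ is a $(d-1)$-face; for shape-regular rectangular cells $\abs{K}\lesssim h_\T\,\abssig$ and likewise for $L$. Plugging $\abs{\dsig}/\abssig\lesssim h_\T$ into the displayed bound and summing $\int_{\dsig}\abs{\gradd q}^p\,\dx$ over the (essentially disjoint) dual cells gives $\|\mathcal{R}_\T q - q\|_{L^p(\Omega)}^p \lesssim h_\T^p\,\|\gradd q\|_{L^p(\Omega)}^p$, and taking $p$-th roots yields \eqref{eqn:recon-grad-est}.

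The only genuinely delicate point is the uniform geometric bound $\abs{\dsig}/\abssig\lesssim h_\T$ and the bookkeeping ensuring the constant depends only on $d$ and $p$ (through mesh regularity via $\alpha$) and not on the particular mesh; this is where the assumption $\alpha h_\T\le\inf_K h_K$ and the rectangular structure of the cells are used, so that each $\abssig$ controls the adjacent cell volumes from below. Everything else is elementary: the pointwise jump bound coming from $\mu_\sigma\in[0,1]$, the identity $\abs{q_L-q_K}=\tfrac{\abs{\dsig}}{\abssig}\abs{(\gradd q)_\sigma}$, and the fact that the dual cells tile $\Omega$ so the sum of the local $L^p$ contributions of $\gradd q$ reassembles $\|\gradd q\|_{L^p(\Omega)}^p$.
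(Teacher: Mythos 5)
Your proposal is correct and follows essentially the same route as the paper: decompose over dual cells, bound $\abs{\hat q_\sigma - q_K}$ by the jump $\abs{q_L-q_K}$ using $\mu_\sigma\in[0,1]$, and convert the jump to $(\gradd q)_\sigma$ via the geometric bound $\abs{\dsig}/\abssig\lesssim h_\T$. If anything, you are more explicit than the paper about where that geometric estimate (and hence the mesh-regularity parameter $\alpha$) enters the constant, which the paper leaves implicit in its final inequality.
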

\begin{proof}
    Noting that $\mathcal{R}_\T q - q = 0$ on $\dsig$ for each $\sigma\in\E_{ext}\cap \E(K)$, we get
    \begin{equation*}
        \begin{split}
            \norm{\mathcal{R}_\T q-q}_{L^p(\Omega)}^p &= \sum_{\substack{\sigma\in\E_{int} \\ \sigma = K\vert L}}\norm{\mathcal{R}_\T q-q}_{L^p(\dsig)}^p + \sum_{\sigma\in\E_{ext}}\norm{\mathcal{R}_\T q-q}_{L^p(D_{\sigma})}^p\\
            &=\sum_{\substack{\sigma\in\E_{int} \\ \sigma = K\vert L}}\Bigl(\norm{\mathcal{R}_\T q-q}_{L^p(D_{K,\sigma})}^p+\norm{\mathcal{R}_\T q-q}_{L^p(D_{L,\sigma})}^p\Bigr) \\
            &\leq\sum_{\substack{\sigma\in\E_{int} \\ \sigma = K\vert L}} \abs{\dsig}\abs{q_L-q_K}^p \\
            &\leq c h_{\T}^p\sum_{\substack{\sigma\in\E_{int} \\ \sigma = K\vert L}} \norm{\gradd q}_{L^p(D_\sigma)}^p \\
            &=c h_\T^p\norm{\gradd q}^p_{L^p(\Omega)},
        \end{split}
    \end{equation*}
    which establishes \eqref{eqn:recon-grad-est}.
\end{proof}

We can now give the proof of Lemma \ref{lem:wk-cong-mesh-recon}.

\begin{proof}[Proof of Lemma \ref{lem:wk-cong-mesh-recon}]

For the sake of convenience, let $\mathcal{R}_{\T^\m}=\mathcal{R}_m$, $\Pi_{\T^\m}=\Pi_m$ and $h_{\T^\m} = h^\m$. It is enough to show that $\lVert{\mathcal{R}_m q^\m - q^\m}\rVert_{L^p(\Omega)}\to 0$ as $m\to\infty$ to prove the claim.

Using the density of $\Cinf(\Omega)$ in $L^p(\Omega)$ and the fact that $\lbrace q^\m\rbrace_{m\in\N}$ is a uniformly $L^p$-bounded sequence, given $\veps>0$, we obtain $\varphi^\m\in \Cinf(\Omega)$ such that $\lVert\varphi^\m - q^\m\rVert_{L^p(\Omega)}<\veps$ for each $m\in\N$ and that the sequence $\lbrace\varphi^\m\rbrace_{m\in\N}$ is uniformly bounded. Also, since $h^\m\to 0$ as $m\to\infty$, there exists $M\in\N$ such that $h^\m<\veps$ for all $m\geq M$. 

Then, noting that $\mathcal{R}_m\Pi_m q^\m =\mathcal{R}_m q^\m$ as $q^\m\in\mathcal{L}_{\T^\m}(\Omega)$ for each $m\in\N$, we get
\begin{equation}
\label{eqn:seq-est}
    \begin{split}
        \lVert\mathcal{R}_m q^\m-q^\m\rVert_{L^p(\Omega)} &\leq \lVert\mathcal{R}_m\Pi_m q^\m-\mathcal{R}_m\Pi_m\varphi^\m\rVert_{L^p(\Omega)} + \lVert\mathcal{R}_m\Pi_m\varphi^\m - \Pi_m\varphi^\m\rVert_{L^p(\Omega)} \\
        &+ \lVert\Pi_m\varphi^\m-\varphi^\m\rVert_{L^p(\Omega)} + \lVert\varphi^\m-q^\m\rVert_{L^p(\Omega)}.
    \end{split}
\end{equation}
Every term on the right hand side can be estimated using a combination of Lemma \ref{lem:stab-recon-op}, Lemma \ref{lem:recon-grad-est} and the estimates provided by \eqref{eqn:op-est} as follows: 
\begin{align*}
&\lVert\mathcal{R}_m\Pi_m q^\m-\mathcal{R}_m\Pi_m\varphi^\m\rVert_{L^p(\Omega)}\leq c \lVert\varphi^\m-q^\m\rVert_{L^p(\Omega)}<c\veps, \\ 
&\lVert\mathcal{R}_m\Pi_m\varphi^\m - \Pi_m\varphi^\m\rVert_{L^p(\Omega)}\leq c h^\m\lVert\nabla_{\D^\m}\Pi_m\varphi^\m\rVert_{L^p(\Omega)}< c \veps, \ \forall\,m\geq M, \\
&\lVert\Pi_m\varphi^\m-\varphi^\m\rVert_{L^p(\Omega)}\leq c h^\m< c\veps, \ \forall\,m\geq M.
\end{align*}
Combining all the above estimates, we get
\[
\lVert\mathcal{R}_m q^\m-q^\m\rVert_{L^p(\Omega)}<c\veps, \ \forall\,m\geq M.
\]
Hence, $\lVert\mathcal{R}_m q^\m-q^\m\rVert_{L^p(\Omega)}\to 0$ as $m\to\infty$ and this proves the claim.
\end{proof}

We now have all the necessary tools required to prove Theorem \ref{thm:cons-scheme}.

\begin{proof}[Proof of Theorem \ref{thm:cons-scheme}]

First, we prove the consistency of the mass balance \eqref{eqn:disc-mss-bal}. Let $\varphi\in\Cinf(\lbrack 0,T)\times\Omega)$ and let $\varphi_{\T,\delt}$ denote its interpolate on the primal grid given by \eqref{eqn:spc-time-ipol}. Multiply the mass balance \eqref{eqn:disc-mss-bal} with $\absk\delt\varphi^{n}_K$ and sum over all $K\in\T$ and $0\leq n\leq N-1$ to obtain 
\[
T_1+T_2=0,
\]
where 
\begin{equation}
\label{eqn:mss-cons-split}
    \begin{split}
        & T_1 = \sum_{n=0}^{N-1}\sum_{K\in\T}\absk(\rk^{n+1}-\rk^n)\varphi^{n}_K, \\
        & T_2 = \sum_{n=0}^{N-1}\delt\sum_{K\in\T}\sum_{\substack{\sink \\ \sigma=K\vert L}}\abssig\Bigl(\flx^{n+1,+}+\flx^{n+1,-}\Bigr)\varphi^{n}_K.
    \end{split}
\end{equation}
Analogous to \cite[Theorem 3.5]{HLN18} and \cite[Theorem 4.5]{HLS21}, we can write $T_1$ as 
\begin{equation}
\label{eqn:T1}
    T_1 = -\int_0^T\int_\Omega\vrho_{\T,\delt}\eth_t\varphi_{\T,\delt}\,\dx\,\dt - \int_\Omega\vrho_{T,\delt}(0,x)\varphi_{\T,\delt}(0,x)\,\dx.
\end{equation}
$T_2$ is again split as $T_2 = T_{2,1} + T_{2,2}$, where 
\begin{equation}
\label{eqn:T2-split}
    \begin{split}
        &T_{2,1} = \sum_{n=0}^{N-1}\delt\sum_{K\in\T}\sum_{\substack{\sink \\ \sigma=K\vert L}}\abssig\Bigl(\rk^{n+1} u^{n,+}_{\sk}+\vrho^{n+1}_L u^{n,-}_{\sk}\Bigr)\varphi^{n}_K, \\
        &T_{2,2} = -\sum_{n=0}^{N-1}\delt\sum_{K\in\T}\sum_{\substack{\sink \\ \sigma=K\vert L}}\abssig\Bigl(\rk^{n+1} \delta u^{n+1,-}_{\sk}+\vrho^{n+1}_L\delta u^{n+1,+}_{\sk}\Bigr)\varphi^{n}_K.
    \end{split}
\end{equation}
We can reorder the summation in $T_{2,1}$ to get
\begin{equation}
    \label{eqn:T21-reord}
    T_{2,1} = -\sum_{n=0}^{N-1}\delt\sum_{\substack{\sigma\in\E_{int} \\ \sigma=K\vert L}}\abs{\dsig}\vrho^{n+1}_\sigma\overline{\bu}^n_\sigma\cdot\frac{\abssig}{\abs{\dsig}}(\varphi^{n}_L-\varphi^{n}_K)\nuk.
\end{equation}
We now set $\vrho_{\D,\delt}$ and $\bu_{\D,\delt}$ as piecewise constant functions on the dual cells that are equal to $\vrho^{n+1}_\sigma$ (the upwind choice) and $\overline{\bu}^n_\sigma$ respectively on each dual cell $\dsig$ and time interval $\lbrack t_n,t_{n+1})$, which are the reconstructions of $\vrho_{\T,\delt}$ and $\bu_{\T,\delt}$ on the dual grid. Then, we can rewrite \eqref{eqn:T21-reord} as
\begin{equation}
\label{eqn:T21-final}
    T_{2,1} = -\int_0^T\int_\Omega\vrho_{\D,\delt}\bu_{\D,\delt}\cdot\gradd\varphi_{\T,\delt}\,\dx\,\dt.
\end{equation}
Combining the terms, we obtain 
\begin{equation}
\label{eqn:mss-cons2}
    \begin{split}
        -\int_{\Omega}\vrho_{\T,\delt}(0,x)\varphi(0,x)\,\dx\,=\,\int_0^T\int_\Omega\bigl\lbrack &\vrho_{\T,\delt}\Dt\varphi
        +\vrho_{\D,\delt}\bu_{\D,\delt}\cdot\grad\varphi\bigr\rbrack\,\dx\,\dt + \mathcal{R}^{mass}_{\T,\delt},
    \end{split}
\end{equation}
where 
\begin{equation}
\label{eqn:mss-cons-err}
    \begin{split}
        &\mathcal{R}^{mass}_{\T,\delt} = T_{2,2}+\,\int_0^T\int_\Omega \vrho_{\T,\delt}(\eth_t\varphi_{\T,\delt}-\Dt\varphi)\,\dx\,\dt 
        +\int_{0}^T\int_\Omega\vrho_{\D,\delt}\bu_{\D,\delt}\cdot(\gradd\varphi_{T,\delt}-\grad\varphi)\,\dx\,\dt \\
        &+\int_{\Omega}\vrho_{\T,\delt}(0,x)(\varphi_{\T,\delt}(0,x)-\varphi(0,x))\,\dx\  \\
        & = T_{2,2}+R_1+R_2+R_3.
    \end{split}
\end{equation}
The terms $R_1,R_3$ are readily estimated using the strong convergence of the interpolates $\varphi_{\T,\delt}, \eth\varphi_{\T,\delt}$ to $\varphi,\Dt\varphi$ respectively along with the presumed assumptions. To estimate $R_2$, we use the estimate provided by \eqref{eqn:op-est} to obtain 
\[
\abs{R_2}\leq \overline{\vrho}\,\overline{u}\int_0^T\int_\Omega\abs{\gradd\varphi_{T,\delt}-\grad\varphi}\,\dx\,\dt\lesssim h_\T.
\]
Finally, $T_{2,2}$ can be simplified in the same manner as $T_{2,1}$ and further, using Lemma \ref{lem:stab-recon-op} and the pressure gradient estimate \eqref{eqn:pres-grad-bound} along with Hypothesis \ref{hyp:den-bound}, we get

\[
\abs{T_{2,2}}\leq \overline{\vrho}\eta\sqrt{\delt}\norm{\varphi}_{\infty}\norm{\sqrt{\delt}\gradt p_{\T,\delt}}_{L^2}\lesssim\sqrt{\delt}.
\]

Thus, we obtain that $\displaystyle\abs{\mathcal{R}^{mass}_{\T,\delt}}=o(h_\T,\sqrt{\delt})$. 

Next, we show the consistency of the momentum balance \eqref{eqn:disc-mom-bal}. Let $\uphi\in\Cinf(\lbrack 0,T)\times\Omega;\R^d)$ and let $\uphi_{\T,\delt}$ denote its interpolate on the primal grid given by \eqref{eqn:spc-time-ipol}. Take the dot product of the momentum balance \eqref{eqn:disc-mom-bal} with $\absk\delt\uphi^n_K$ and sum over all $K\in\T$ and $0\leq n\leq N-1$ to obtain 
\[
S_1+S_2+S_3 = 0,
\]
where 
\begin{equation}
 \label{eqn:mom-bal-split}
    \begin{split}
        &S_1 = \sum_{n=0}^{N-1}\sum_{K\in\T}\absk(\vrho^{n+1}_K\bu^{n+1}_{K}-\rk^n\uk^n)\cdot\uphi^n_K, \\
        &S_2 = \sum_{n=0}^{N-1}\delt\sum_{K\in\T}\sum_{\substack{\sink \\ \sigma = K\vert L}}\abssig\Bigl(F^{n+1,+}_{\sk}\uk^n+F^{n+1,-}_{\sk}\bu^n_L\Bigr)\cdot\uphi^n_K, \\
        &S_3 = \sum_{n=0}^{N-1}\delt\sum_{K\in\T}\absk(\gradt p^{n+1})_K\cdot\uphi^n_K.
    \end{split}
\end{equation}
Again, proceeding as in \cite[Theorem 3.5]{HLN18} and \cite[Theorem 4.5]{HLS21}, we can write $S_1$ as 
\begin{equation}
    \label{eqn:S1}
        S_1 = -\int_0^T\int_\Omega\vrho_{\T,\delt}\bu_{\T,\delt}\cdot\eth_t\uphi_{\T,\delt}\,\dx\,\dt - \int_\Omega\vrho_{T,\delt}(0,x)\bu_{\T,\delt}(0,x)\cdot\uphi_{\T,\delt}(0,x)\,\dx.
\end{equation}
Now, we use the discrete grad-div duality \eqref{eqn:disc-grad-div-dual} to rewrite $S_3$ as 
\begin{equation}
\label{eqn:S3}
    S_3 = -\int_0^T\int_\Omega p_{\T,\delt}(t+\delt,x)\divt\uphi_{\T,\delt}(t,x)\,\dx\,\dt.
\end{equation}

Analogous to the mass balance, we split $S_2$ as $S_2 = S_{2,1}+S_{2,2}$ with the simplified form of $S_{2,1}$ given by  
\begin{equation}
    \label{eqn:S2-split}
    S_{2,1} = -\int_0^T\int_\Omega\vrho_{\D,\delt}(\bu_{\D,\delt}\otimes\bu_{\D,\delt})\colon\gradd\uphi_{\T,\delt}\,\dx\,\dt,
\end{equation}
and $S_{2,2}$ reads
\begin{equation}
    S_{2,2} = -\sum_{n=0}^{N-1}\delt\sum_{K\in\T}\sum_{\substack{\sink \\ \sigma=K\vert L}}\abssig\Bigl(\rk^{n+1}\uk^n \delta u^{n+1,-}_{\sk}+\vrho^{n+1}_L\bu^n_L\delta u^{n+1,+}_{\sk}\Bigr)\cdot\uphi^{n}_K.
\end{equation}
Combining the terms, we obtain 
\begin{equation}
\label{eqn:mom-cons2}
    \begin{split}
    -\int_\Omega\vrho_{\T,\delt}(0,x)\bu_{\T,\delt}(0,x)\cdot\uu{\varphi}(0,x)\,\dx\,&=\,\int_0^T\int_\Omega\bigl\lbrack\vrho_{\T,\delt}\bu_{\T,\delt}\cdot\grad\uu{\varphi} 
    +\vrho_{\D,\delt}(\bu_{\D,\delt}\otimes\bu_{\D,\delt})\colon\grad\uu{\varphi}\\
    &+p_{\T,\delt}(t+\delt, x)\Div\uu{\varphi}(t,x)\bigr)\bigr\rbrack\,\dx\,\dt + \mathcal{R}^{mom}_{\T,\delt},
    \end{split}
\end{equation}
where
\begin{equation}
\label{mom-cons-err}
    \begin{split}
        &\mathcal{R}^{mom}_{\T,\delt} = S_{2,2}+\,\int_0^T\int_\Omega \vrho_{\T,\delt}\bu_{\T,\delt}\cdot(\eth_t\uphi_{\T,\delt}-\Dt\uphi)\,\dx\,\dt \\
        &+ \int_{0}^T\int_\Omega\vrho_{\D,\delt}(\bu_{\D,\delt}\otimes\bu_{\D,\delt})\colon(\gradd\uphi_{T,\delt}-\grad\uphi)\,\dx\,\dt \\
        &+\int_0^T\int_\Omega p_{\T,\delt}(t+\delt,x)(\divt\uphi_{\T,\delt}(t,x)-\Div\uphi(t,x))\,\dx\,\dt \\
        &+ \int_{\Omega}\vrho_{\T,\delt}(0,x)\bu_{\T,\delt}(0,x)\cdot(\uphi_{\T,\delt}(0,x)-\uphi(0,x))\,\dx
        = S_{2,2}+Q_1+Q_2+Q_3+Q_4.
    \end{split}
\end{equation}
$Q_1, Q_4$ are readily estimated due to the strong convergence of the interpolants to the respective classical variants along with the presumed assumptions. $Q_2$ and $Q_3$ are handled using the estimates provided by \eqref{eqn:op-est} as follows: 
\begin{equation*}
    \begin{split}
        &\abs{Q_2}\leq\overline{\vrho}\,\overline{u}^2\int_{0}^T\int_\Omega\abs{\gradd\uphi_{\T,\delt}-\grad\uphi}\,\dx\,\dt\lesssim h_\T, \\
        &\abs{Q_3}\leq a\overline{\vrho}^\gamma\int_0^T\int_\Omega\abs{\divt\uphi_{\T,\delt}-\Div\uphi}\,\dx\,\dt\lesssim h_\T.
    \end{split}
\end{equation*}
Finally, $S_{2,2}$ can be estimated using Lemma \ref{lem:stab-recon-op} and the pressure gradient estimate \eqref{eqn:pres-grad-bound} along with Hypothesis \ref{hyp:den-bound} to yield

\[
\abs{S_{2,2}}\leq\overline{\vrho}\,\overline{u}\eta\sqrt{\delt}\norm{\uphi}_{\infty}\norm{\sqrt{\delt}\gradt p_{\T,\delt}}_{L^2}\lesssim\sqrt{\delt}.
\]
Thus, we obtain $\displaystyle\abs{\mathcal{R}^{mom}_{\T,\delt}}=o(h_\T, \sqrt{\delt})$ and this completes the proof.
\end{proof}

\section{Convergence Of The Scheme}
\label{sec:conv}
This entire section is dedicated to the proof of Theorem \ref{thm:scheme-wk-con}. For a fixed space-time discretization $(\T,\delt)$, we set $E_{\T,\delt}(t,x) = E^n_K$ for $x\in K\in\T$ and $t\in\lbrack t_n,t_{n+1})$, $n=0,\dots, N-1$.

\begin{proof}[Proof of Theorem \ref{thm:scheme-wk-con}]

As a consequence of the assumptions made in Hypothesis \ref{hyp:den-bound} and Theorem \ref{thm:cons-scheme}, denoting $p_{\T^\m,\delt^\m} = p^\m$ and $E_{\T^\m,\delt^\m} = E^\m$, we have the estimates
\begin{equation}
\label{eqn:seq-clas}
    \begin{split}
        &(\vrho^\m)_{m\in\N},\, (p^\m)_{m\in\N}, (E^\m)_{m\in\N}\subset L^\infty(\odom), \\
        &(\bm^\m)_{m\in\N}\subset L^\infty(\odom;\R^d), \\
        &\Biggl(\frac{\bm^\m\otimes\bm^\m}{\vrho^\m}\Biggr)_{m\in\N}\subset\ L^\infty(\odom;\R^{d\times d}).
     \end{split}
\end{equation}
Applying the fundamental theorem of Young measures, cf.\ \cite{JBal89, Ped97}, yields the existence of a subsequence (not relabelled) that generates a Young measure $\vv=\lbrace\V\rbrace_{(t,x)\in\odom}$ such that 
\begin{align*}
    &\vrho^\m\weakstar\langle\V; \tvrho\rangle\text{ in }L^\infty(\odom), \\
    &\bm^\m\weakstar\langle\V;\tbm\rangle\text{ in }L^\infty(\odom; \R^{d}).
\end{align*}
Moreover, as a consequence of the estimates \eqref{eqn:seq-clas}, we get
\begin{equation*}
    \begin{split}
        &\frac{\bm^\m\otimes\bm^\m}{\vrho^\m}\weakstar \Bigl\langle\V; \frac{\tbm\otimes\tbm}{\tvrho}\Bigr\rangle\text{ in }L^\infty(\odom;\R^{d\times d}), \\
        &p^\m\weakstar \langle\V;\tilde{p}\rangle\text{ in }L^\infty(\odom), \\
        &E^\m\weakstar \langle\V;\tilde{E}\rangle\text{ in }L^\infty(\odom),
     \end{split}
\end{equation*}
where $\tilde{p} = p(\tvrho)$ and $\tilde{E} = E(\tvrho,\tbm)$. Hence, we get the required weak precompactness of these sequences and as a consequence, we have $\mathfrak{C}_{cd} = \mathfrak{R}_{cd} = 0$.

Passing to the limit $m\to \infty$ along with the use of Lemma \ref{lem:wk-cong-mesh-recon} in \eqref{eqn:mss-cons} and \eqref{eqn:mom-cons} yields 
\begin{equation}
\label{eqn:mss-baro-dmv}
\biggl\lbrack\int_\Omega\bigl\langle\V;\tvrho\bigr\rangle\varphi\,\dx\biggr\rbrack_{t=0}^{t=\tau}\,=\,\int_0^\tau\int_\Omega\biggl\lbrack\bigl\langle\V;\tvrho\bigr\rangle\Dt\varphi+\bigl\langle\V;\tbm\bigr\rangle\cdot\grad\varphi\biggr\rbrack\,\dx\,\dt,
\end{equation}
for any $\tau\in\lbrack 0,T\rbrack$ and for any $\varphi\in\Cinf(\lbrack 0,T)\times\Omega)$,
\begin{equation}
    \label{eqn:mom-baro-dmv}
    \begin{split}
        &\Lbrack\int_\Omega\Langle\V,\tbm\Rangle\cdot\uu{\varphi}\,\dx\Rbrack_{t=0}^{t=\tau}\,\\
        &=\,\int_0^\tau\int_\Omega\Lbrack\Langle\V,\tbm\Rangle\cdot\Dt\uu{\varphi}+\Langle\V,\frac{\tbm\otimes\tbm}{\tvrho}\Rangle\colon\grad\uu{\varphi}+\Langle\V,p(\tvrho)\Rangle\Div\uu{\varphi}\Rbrack\,\dx\,\dt, \\
    \end{split}
\end{equation}
for any $\tau\in\lbrack 0,T\rbrack$ and $\uphi\in\Cinf(\lbrack 0,T)\times\Omega;\R^d)$.

Further, as a direct consequence of Theorem \ref{thm:disc-loc-ent-ineq}, we obtain that for a.e $\tau\in\lbrack 0,T\rbrack$,
\[
\int_\Omega E^\m (\tau,\cdot)\,\dx \leq\int_{\Omega}E^\m(0,\cdot)\,\dx.
\]
Passing to the limit and noting that $\mathfrak{C}_{cd} = 0$, 
\begin{equation}
\label{eqn:enrg-baro-dmv}
    \int_\Omega\langle\vv_{\tau,x};\tilde{E}\rangle\,\dx\,\dt\leq \int_\Omega E_0\,\dx,
\end{equation}
holds for almost all $\tau\in\lbrack 0,T\rbrack$ where $E_0 =\half\frac{\abs{\textbf{\textit{m}}_0}^2}{\vrho_0} + \psig(\vrho_0)$, proving that $\vv$ is indeed a DMV solution of the Euler system which concludes the proof.
\end{proof}

\section{Numerical Results}
\label{sec:num-res}

The goal of this section is to present the results of the numerical case studies. The implementation of the scheme \eqref{eqn:disc-mss-bal}-\eqref{eqn:disc-mom-bal} is done as follows. The mass balance \eqref{eqn:disc-mss-bal} is solved first in order to obtain the updated density $\vrho^{n+1}$, which is done using an iterative Newton method due to the presence of nonlinear terms. Then, the momentum equation \eqref{eqn:disc-mom-bal} is evaluated explicitly to get the updated velocity $\bu^{n+1}$.

In accordance with Theorem \ref{thm:disc-loc-ent-ineq}, note that the timestep $\delt$ needs to satisfy 
\begin{equation}
\label{eqn:time-step-comp}
\frac{\delt}{\absk}\sum_{\substack{\sink \\ \sigma =  K\vert L}}\abssig\frac{(-\flx^{n+1,-})}{\vrho^{n+1}_K}\leq\half,
\end{equation}
and $\eta\geq(\rk^{n+1})^{-1}$. Both these conditions are implicit in nature and since the former involves the flux as well, they are difficult to implement in practice. However, we remark that if the following implicit restriction holds
\[
\frac{\delt}{\absk}\sum_{\substack{\sink \\ \sigma =  K\vert L}}\abssig\frac{\abs{\flx(\vrho^{n+1}, \bv^n)}}{\vrho^{n+1}_K}\leq\half,
\]
then,
\[
\frac{3}{2}\rk^{n+1}-\rk^n \geq \rk^{n+1}-\rk^n+\frac{\delt}{\absk}\sum_{\substack{\sink \\ \sigma = K\vert L}}\abssig\abs{\flx(\vrho^{n+1}, \bv^n)}\geq 0.
\]
Therefore, we get that 
\[
\frac{3}{2\rk^n} \geq \frac{1}{\rk^{n+1}}
\]
and as a consequence, choosing 
\begin{equation}
\label{eqn:eta-suff-cond}
    \eta\geq\dfrac{3}{2\rk^n}
\end{equation} 
satisfies the necessary condition $\eta \geq (\rk^{n+1})^{-1}$.

We now state a sufficient timestep restriction which is easy to implement in practice for the scheme \eqref{eqn:disc-mss-bal}-\eqref{eqn:disc-mom-bal}.

\begin{proposition}
\label{prop:suff-time-step-comp}
    For each $\sigma = K\vert L\in\E(K)$, suppose the following holds:
    \begin{equation}
    \label{eqn:suff-time-step-comp}
        \delt\max\biggl\lbrace\frac{\abs{\partial K}}{\abs{K}},\frac{\abs{\partial L}}{\abs{L}}\biggr\rbrace\biggl\lbrace\abs{\overline{\bu}^n_\sigma} +\sqrt{\eta\abs{\frac{(\gradt p^{n+1})_K + (\gradt p^{n+1})_L}{2}}}\biggr\rbrace\leq\min\biggl\lbrace 1, \frac{1}{3}\frac{\min\lbrace\rk^n, \vrho^n_L\rbrace}{\max\lbrace\vrho^{n+1}_K,\vrho^{n+1}_L\rbrace} \biggr\rbrace,
    \end{equation}
    where $\abs{\partial K} = \sum_{\sink}\abssig$. Then, $\delt$ satisfies \eqref{eqn:time-step-comp}.
\end{proposition}

\begin{proof}
    The proof follows exactly as given in \cite[Proposition 3.2]{CDV17} and hence we omit the details.
\end{proof}

\begin{remark}
    Although the sufficient timestep condition \eqref{eqn:suff-time-step-comp} is still implicit in nature, we implement it explicitly during the computations as also done in \cite{AGK22, DVB17}.
\end{remark}

In all the numerical experiments that follow, we consider a uniform Cartesian mesh. Let $U_k$ denote the solution computed on a mesh of $k$ points for a $1D$ problem and a grid of $k\times k$ points for a $2D$ problem. Let $\overline{U}_k$ and $\Tilde{U}_k$ respectively denote the Ces\`{a}ro average of the numerical solutions and their first variance, i.e. 
\[
\overline{U}_k = \frac{1}{k}\sum_{j=1}^k U_j,\,\,\,\,\,\Tilde{U}_k = \frac{1}{k}\sum_{j=1}^k\abs{U_j-\overline{U}_k}.
\]

Let $U_K$ be the reference solution computed on the finest possible mesh. In the experiments that follow, the reference solution is always computed using an explicit Rusanov scheme. Analogous to \cite{FLM+21a, FLM+21b}, we compute the four errors 
\[
E_1 = \|U_k-U_{K}\|,\, E_2 = \|\overline{U}_k-\overline{U}_{K}\|,\, E_3 = \|\Tilde{U}_k-\Tilde{U}_{K}\|,\, E_4 = \|W_1(\overline{\vv}^k_{t,x},\overline{\vv}^{K}_{t,x})\|.
\]
where $\norm{\cdot}$ denotes the $L^1$-norm and $\overline{\vv}^k_{t,x} = \frac{1}{k}\sum_{j=1}^k\delta_{U_j(t,x)}$, where $\delta_{U_j(t,x)}$ denotes the Dirac measure centered at $U_j(t,x)$ for $(t,x)\in\odom$. Further, we also compute the errors
\[
E_5 = \|\overline{U}_{k}-\overline{U}_{K}\|_2,\, E_6 = \|W_1(\overline{\vv}^{k}_{t,x},\overline{\vv}^{K}_{t,x})\|_2
\]
in order to corroborate the claims made in Theorem \ref{thm:scheme-kcon}, where $\|\cdot\|_2$ denotes the $L^2$-norm. According to Theorem \ref{thm:scheme-kcon}, we only have $L^1$-convergence of the 1-Wasserstein distance. However, we also observe $L^2$-convergence in the experiments that follow and hence, report the same.

\begin{remark}
    $W_1$ above denotes the 1-Wasserstein distance and it is defined as follows:\ Given probability measures $\mu$ and $\nu$ on a subset $Q\subset\R^d$, the 1-Wasserstein distance between them is defined as in \cite{FLM+21a}.
    \[
    W_1(\mu,\nu) = \inf_{\lambda\in\Pro(Q\times Q), P_1\lambda = \mu, P_2\lambda = \nu}\int_{Q\times Q}\abs{x - y}\mathrm{d}\lambda(x,y),
    \]
    where $\Pro(Q\times Q)$ denotes the set of probability measures on $Q\times Q$ and $P_1\lambda, P_2\lambda$ represent the projections (marginals) of the measure $\lambda\in\Pro(Q\times Q)$, i.e.\ $P_1\lambda(S) = \lambda(S\times Q)$ and $P_2\lambda(S) = \lambda(Q\times S)$ for any subset $S$ of $Q$. In the experiments that follow, this distance is computed using the {\texttt{wasserstein\textunderscore distance}} function available in the {\texttt{scipy.stats}} package of the open source library SciPy, cf.\ \cite{JOP01}.
\end{remark}

\subsection{Cylindrical Explosion} 
\label{subsec:cyl-exp}

We consider the $2D$ cylindrical explosion problem from \cite{DLV17} with the pressure law given as $p(\vrho) = \vrho^{1.4}$. The initial density reads 
\begin{equation*}
    \vrho(0,x_1,x_2) = 
    \begin{dcases}
        2, &r^2\leq\frac{1}{4}, \\
        1, &\text{otherwise},
    \end{dcases}
\end{equation*}
where $r^2 = x_1^2+x_2^2$. The initial velocity field is taken as 
\begin{equation*}
    (u_1,u_2)(0,x_1,x_2) = -\frac{\alpha(x_1,x_2)}{\vrho(0,x_1,x_2)}\biggl(\frac{x_1}{r},\frac{x_2}{r}\biggr)\X_{r > 10^{-15}},
\end{equation*}
where $\alpha(x_1,x_2) = \max\{0, 1-r\}(1-e^{-16r^2})$. We consider the computational domain $\lbrack -1,1\rbrack\times\lbrack -1,1\rbrack$ with periodic boundaries everywhere. The  final time is $T=0.25$ and $K = 2048$ in this case. 

In Figure \ref{fig:cyl-exp-plot}, we present the pseudocolor plots of the density $\vrho$ and the momentum $\bm = (m_1,m_2)$, with $k = 1024$, which clearly shows the resolution of the circularly expanding shock wave. In Figure \ref{fig:cyl-exp-err}, we present the plots of the errors $E_1,\dots, E_6$ for the density and the momentum. We observe that classical convergence of individual numerical solutions is not obtained, in particular due to the increase in $E_1$ error for the momentum components. However, the approach using $\K$-convergence yields the convergence of the Ces\`{a}ro averages along with the convergence of the $L^1$-deviations and the Wasserstein distance, leading us to conclude that the numerical solutions indeed converge to a DMV solution of the Euler system. Also, the claims made regarding the convergence of the Ces\`{a}ro averages and the Wasserstein distance in any $L^q$-norm seem to hold, described by the errors $E_5$ and $E_6$ respectively. The values of the errors $E_1,\dots,E_6$ of $\vrho$ along with their convergence rates are presented in Table \ref{tab:ord-den-cyl-exp}. As an additional observation, one can see that the errors of the momentum components end up coinciding due to the symmetric nature of the problem. The maximum value of $\eta$ observed for this problem was $1.5$.

\begin{figure}[htpb]
    \centering
    \includegraphics[height = 0.18\textheight]{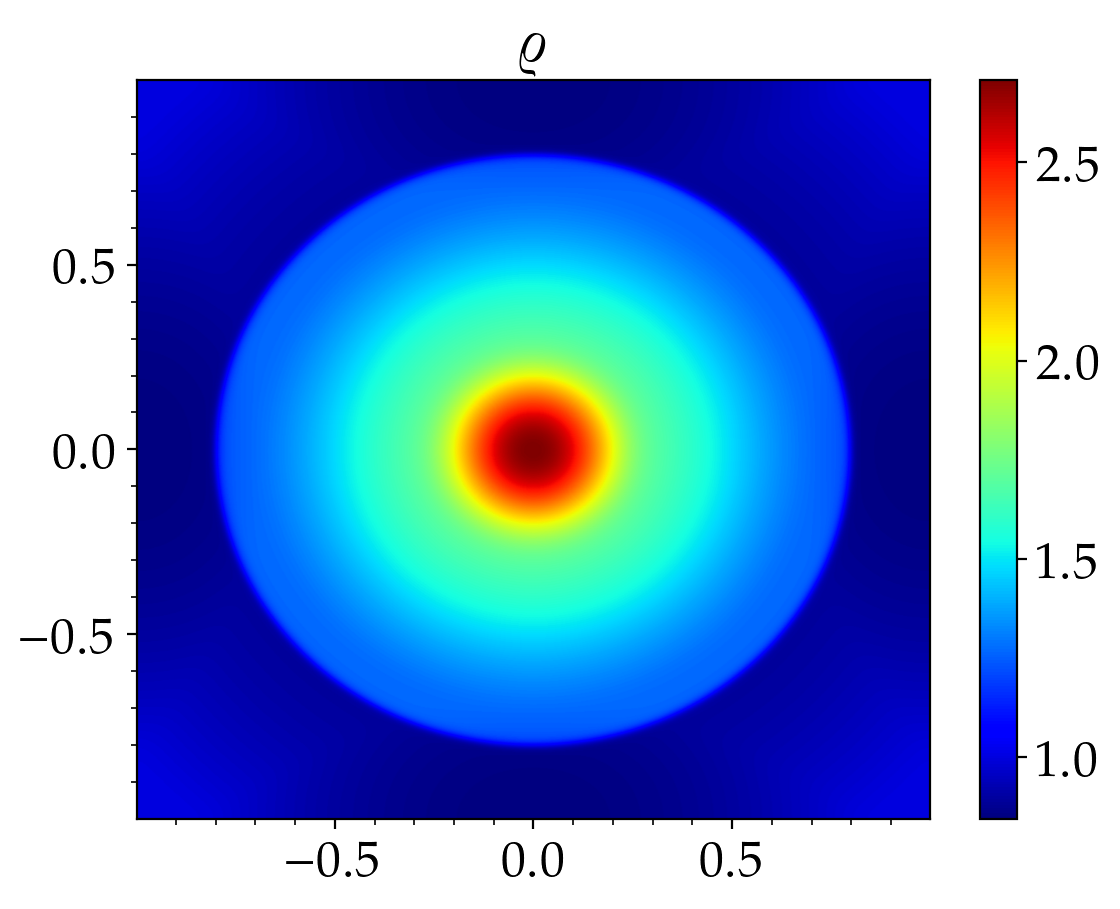}
    \includegraphics[height = 0.18\textheight]{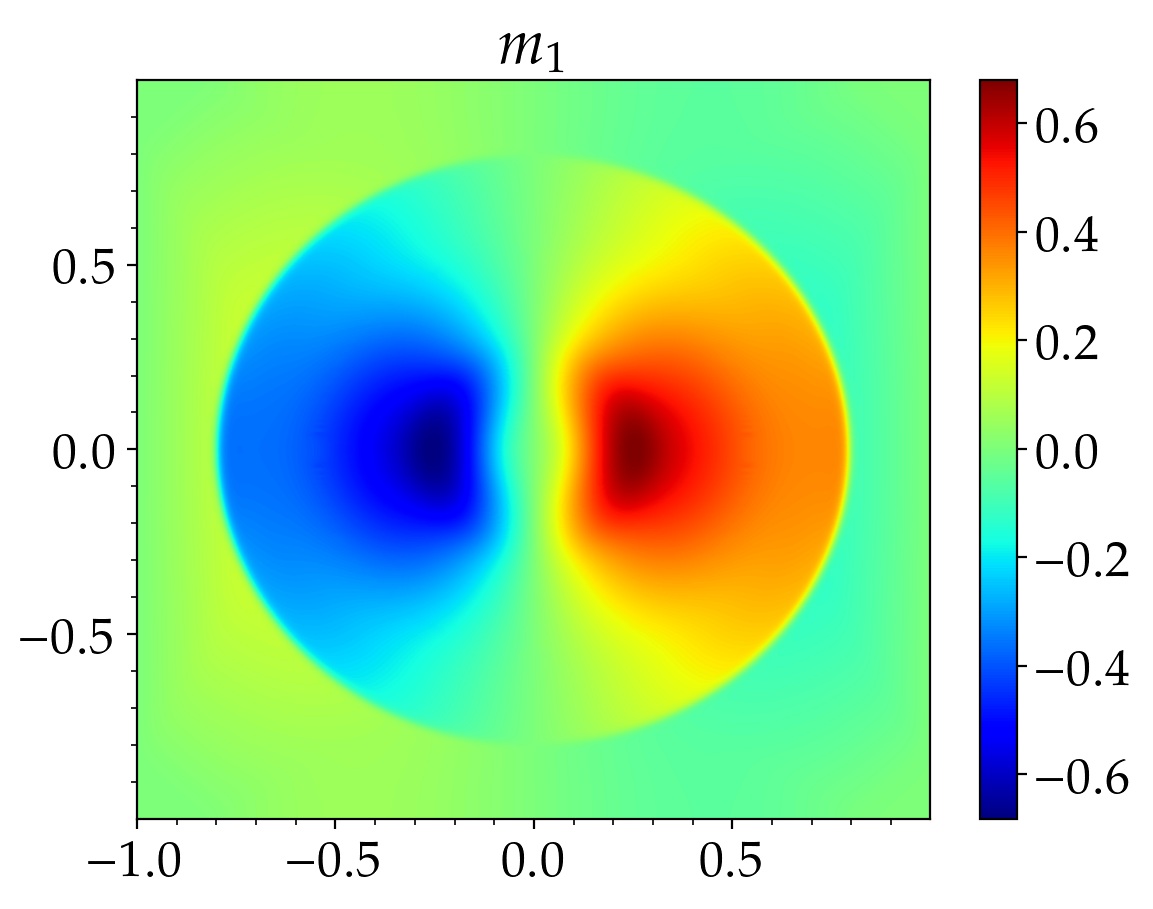}
    \includegraphics[height = 0.18\textheight]{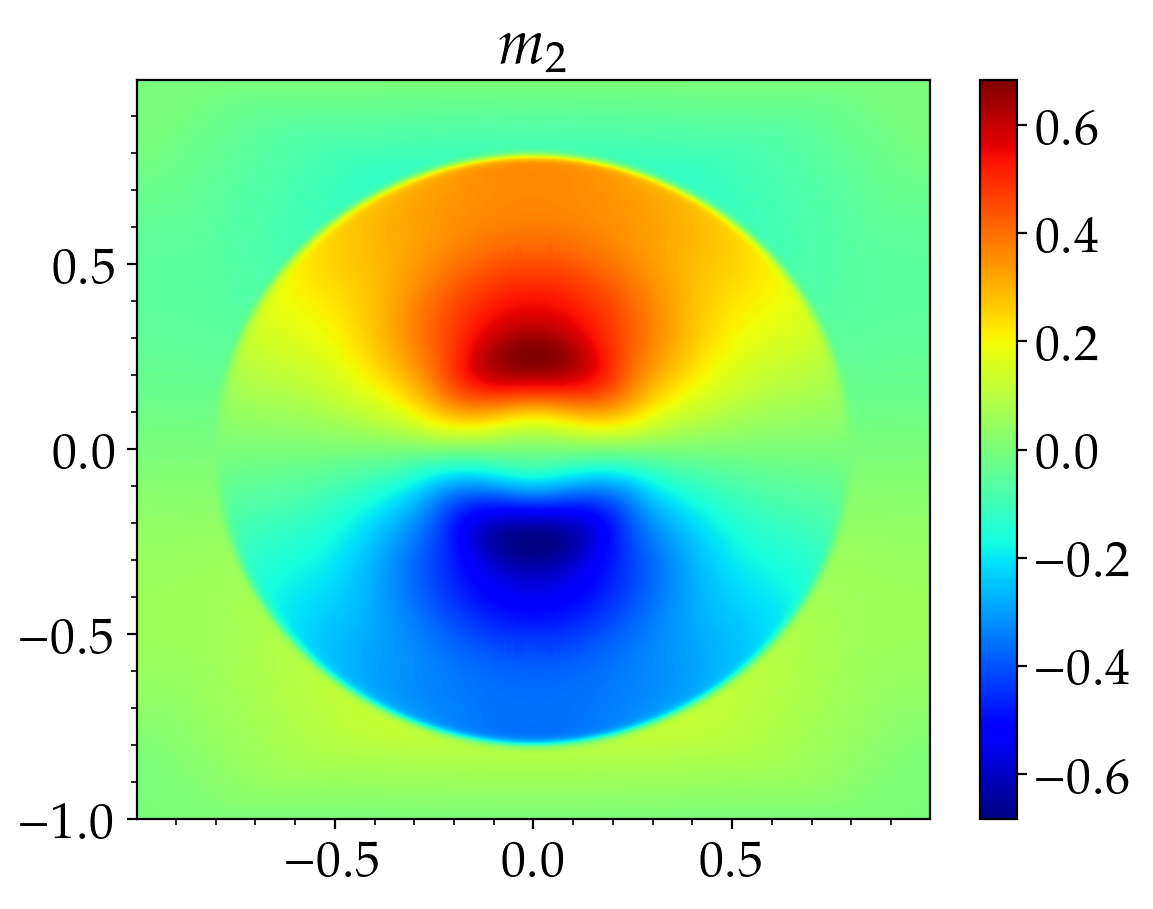}
    \caption{Density and momentum profiles of the cylindrical explosion problem with $k = 1024$ at time $T=0.25$.}
    \label{fig:cyl-exp-plot}
\end{figure}

\begin{figure}[htpb]
    \centering
    \includegraphics[height = 0.4\textheight]{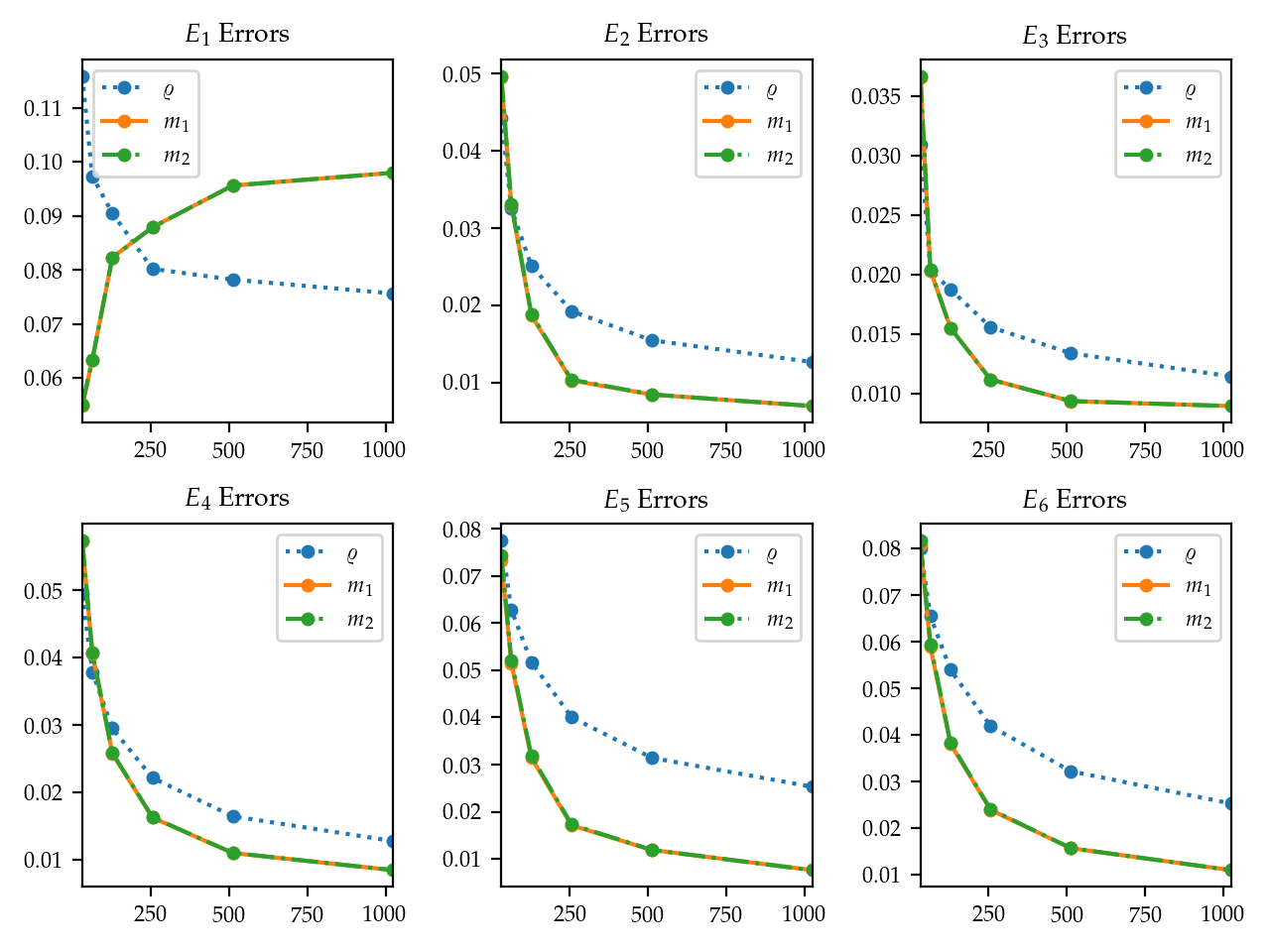}
    \caption{Error profiles for the cylindrical explosion problem.}
    \label{fig:cyl-exp-err}
\end{figure}

\begin{table}[htpb]
    {\begin{flushleft}
    \begin{tabular}{|c|c|c|c|c|c|c|c|c|c|c|c|c|}
    \hline
        & \multicolumn{2}{|c|}{$E_1$} & \multicolumn{2}{|c|}{$E_2$} & \multicolumn{2}{|c|}{$E_3$} & \multicolumn{2}{|c|}{$E_4$} & \multicolumn{2}{|c|}{$E_5$} & \multicolumn{2}{|c|}{$E_6$}\\ \cline{2-13}
        $k$ & error & order & error & order & error & order & error & order & error & order & error & order \\  
        \hline
        32 & 1.15e-1 & - & 4.42e-2 & - & 3.09e-2 & - & 4.94e-2 & - & 7.77e-2 & - & 7.99e-2 & - \\
        \hline
        64 & 9.73e-2 & 0.25 & 3.25e-2 & 0.44 & 2.03e-2 & 0.60  & 3.78e-2 & 0.38 & 6.27e-2 & 0.30 & 6.54e-2 & 0.28 \\
        \hline
        128 & 9.05e-2 & 0.10 & 2.51e-2 & 0.37 & 1.87e-2 & 0.11  & 2.95e-2 &  0.35 & 5.17e-2 & 0.28 & 5.39e-2 & 0.27 \\
        \hline
        256 & 8.02e-2 & 0.17 & 1.92e-2 & 0.38 & 1.56e-2 & 0.26  & 2.21e-2 & 0.41 & 4.01e-2 & 0.36 & 4.19e-2 & 0.36 \\
        \hline
        512 & 7.82e-2 & 0.03 & 1.54e-2 & 0.31 & 1.34e-2 & 0.21 & 1.64e-2 & 0.42 & 3.14e-2 & 0.34 & 3.22e-2 & 0.37 \\
        \hline
        1024 & 7.56e-2 & 0.04 & 1.26e-2 & 0.29 & 1.14e-2 & 0.22 & 1.27e-2 & 0.36 & 2.53e-2 & 0.31 & 2.53e-2 & 0.34\\
        \hline
    \end{tabular}
    \caption{Errors and convergence rates of $\vrho$ for the cylindrical explosion problem}
    \label{tab:ord-den-cyl-exp}
    \end{flushleft}
    }
\end{table}

\begin{remark}
    \label{rem:cyl-exp-yg-meas}
    For the above problem, the $E_1$ error in the case of density is decreasing, indicating the strong convergence of the sequence of numerical solutions $\lbrace\vrho^\m\rbrace_{m\in\N}$ in $L^1$. In accordance with \cite[Proposition 6.13]{Ped97}, the measures $\V$ generated by the sequence of numerical solutions of the above problem can be decomposed as $\V = \delta_{\vrho(t,x)}\otimes \lambda_{t,x}$ for a.e.\ $(t,x)\in\odom$, where $\vrho$ denotes the $L^1$-limit of the sequence $\lbrace\vrho^\m\rbrace_{m\in\N}$ and $\lbrace\lambda_{t,x}\rbrace_{(t,x)\in\odom}$ is the Young measure generated by the sequence $\lbrace \bm^\m\rbrace_{m\in\N}$. 
\end{remark}

\subsection{Kelvin-Helmholtz Instability}
\label{subsec:kel_helm}

We consider the classical problem of Kelvin-Helmholtz instability. The set up is same as that of \cite{Pri08}. The computational domain is set as $\lbrack -0.5, 0.5\rbrack\times\lbrack -0.5,0.5\rbrack$ with periodic boundaries everywhere. The initial density reads 
\begin{equation*}
    \vrho(0,x_1,x_2) = 
    \begin{dcases}
        2, &\text{if }\abs{x_2}<0.25, \\
        1 ,&\text{otherwise},
    \end{dcases}
\end{equation*}
and the pressure law is given by $p(\vrho) = \vrho^{5/3}$. A cartesian shear flow is set up in the $x_1$-direction with velocity 
\begin{equation*}
    u_1(0,x_1,x_2) = 
    \begin{dcases}
        -0.5, &\text{if }\abs{x_2}<0.25, \\
        0.5 ,&\text{otherwise}.
    \end{dcases}
\end{equation*}
Such a configuration is known to be unstable at all wavelengths. We seed the instability by applying a small velocity perturbation in the $x_2$-direction given by 
\begin{equation*}
    u_2(0,x_1,x_2) = 
    \begin{dcases}
        A\sin\lbrack -2\pi(x_1+0.5)/\lambda\rbrack, &\text{if }\abs{x_2-0.25}<0.025, \\
        A\sin\lbrack 2\pi(x_1+0.5)/\lambda\rbrack, &\text{if }\abs{x_2+0.25}<0.025,
    \end{dcases}
\end{equation*}
where we set $A = 0.025$ and $\lambda = 1/6$. For the chosen density ratio of $2:1$, the time after which the instabilities will start to develop is around $t = 0.35$; see \cite{Pri08} for details. Hence, we set the final time as $T=0.4$ and also set $K = 1024$. 

In Figure \ref{fig:den-kel-helm}, we present the density profiles for successive mesh refinements and we can see that the instabilities are just starting to develop when $k = 512$. In Figure \ref{fig:kel-helm-err}, we present the error plots for the density $\vrho$ and the momentum $\bm = (m_1, m_2)$. The numerical results clearly show that all the errors are decreasing upon refining the mesh, leading us to conclude that the Young measure generated by the numerical solution is the family of Dirac masses centred at the limit solution. We also compute the $L^1$-norms of the relative entropy of the numerical solutions with respect to the reference solution, for each $k$, in order to determine whether the limit solution is a strong solution or a weak solution of the Euler system. The results are presented in Table \ref{tab:rel-ent-kel-helm} and we observe that the norm of the relative entropies are also decreasing, indicating that the limit is a strong solution of the Euler system in accordance with the weak-strong uniqueness principle; cf.\ Remark \ref{rem:rel-ent} and Theorem \ref{thm:wk-str-uniq}. The maximum value of $\eta\sim 1.76$ for this problem.

\begin{figure}[htpb]
    \centering
    \includegraphics[height = 0.15\textheight]{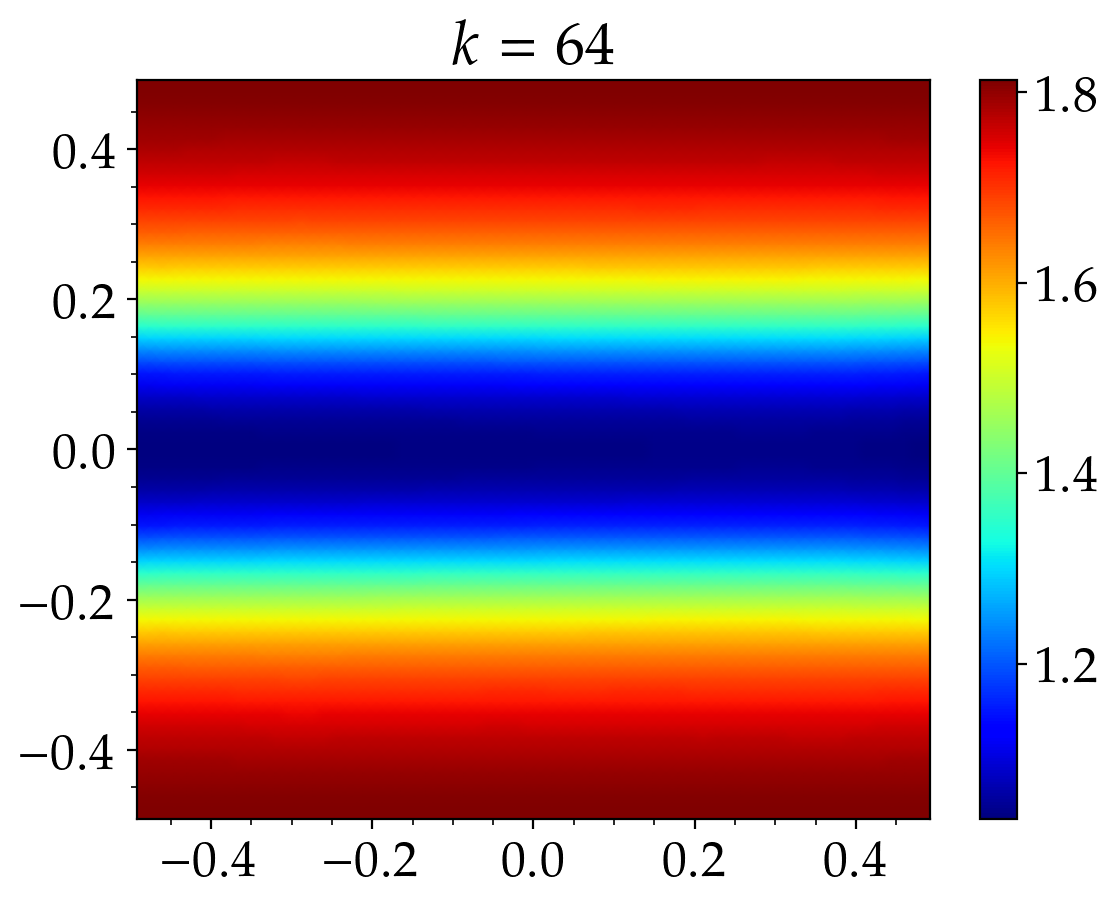}
    \includegraphics[height = 0.15\textheight]{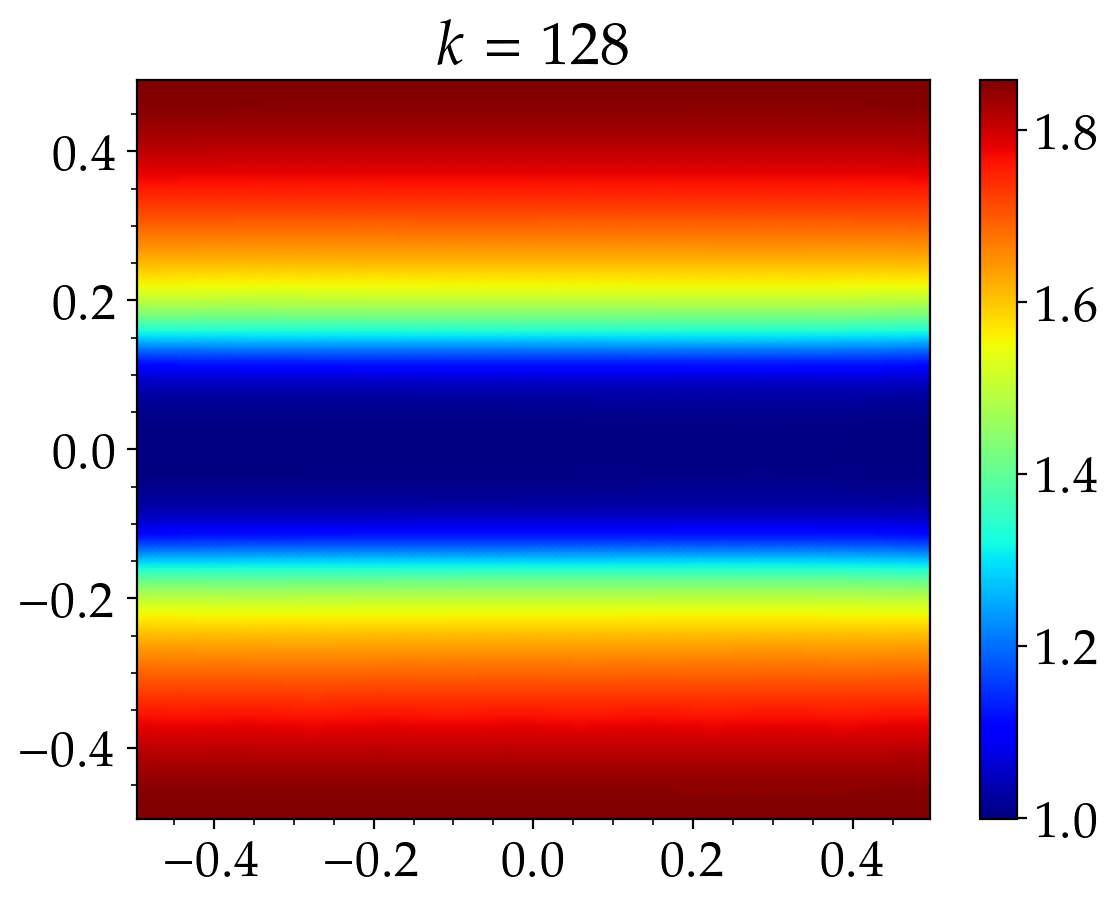}
    \includegraphics[height = 0.15\textheight]{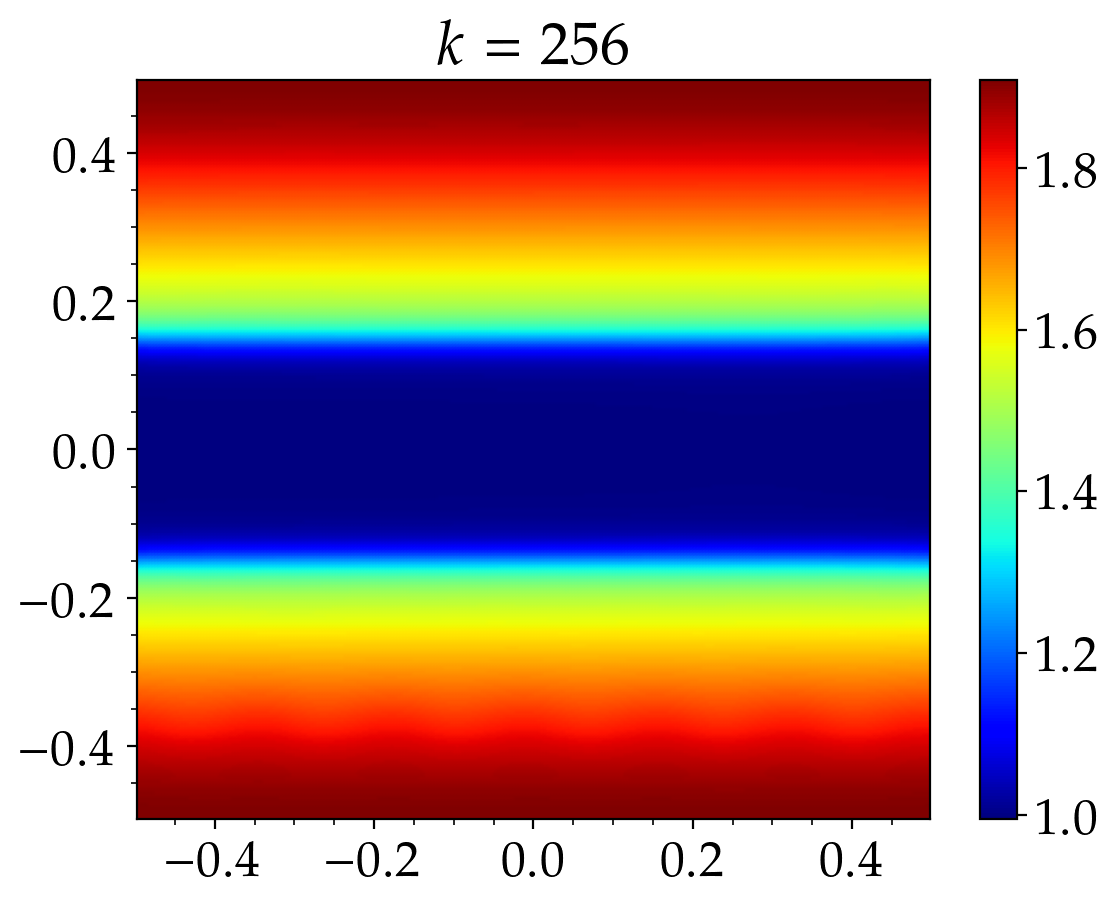}
    \includegraphics[height = 0.15\textheight]{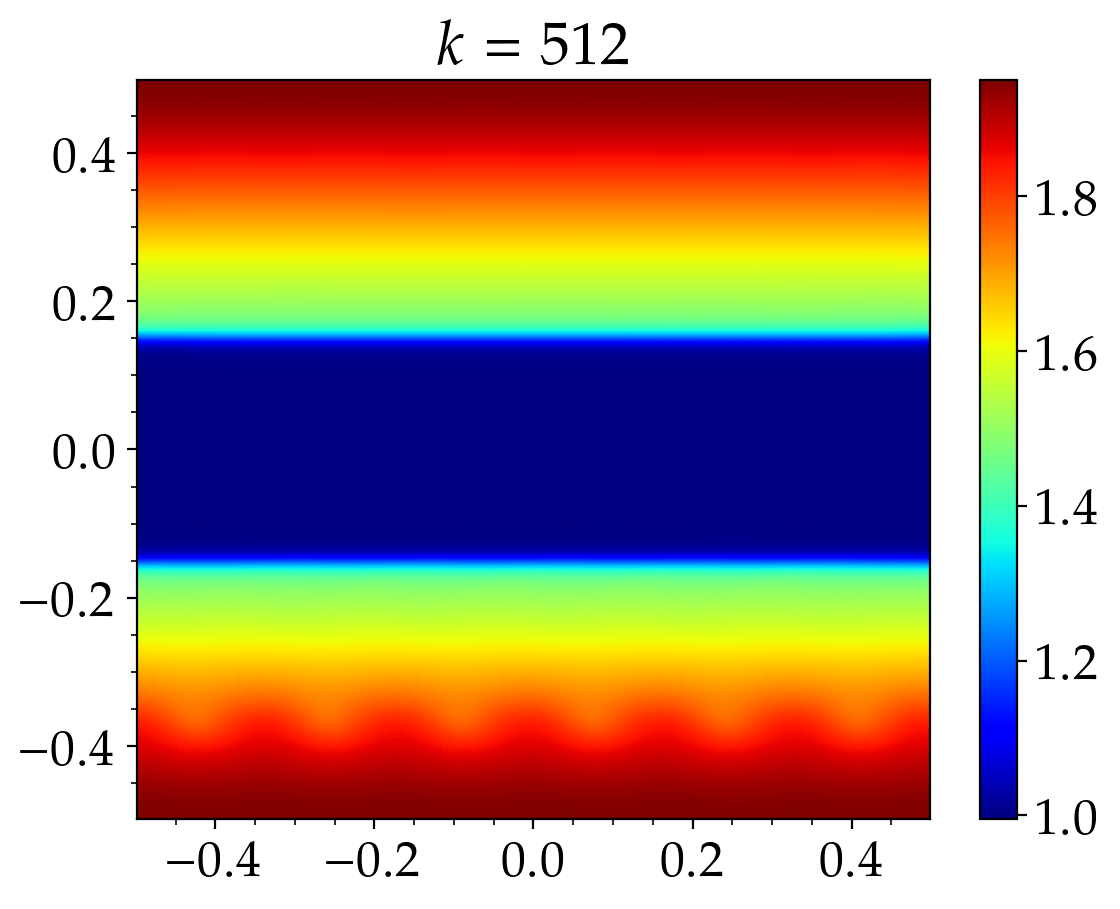}
    \caption{Density profiles of the Kelvin-Helmholtz problem for different mesh sizes.}
    \label{fig:den-kel-helm}
\end{figure}

\begin{figure}[htpb]
    \centering
    \includegraphics[height = 0.4\textheight]{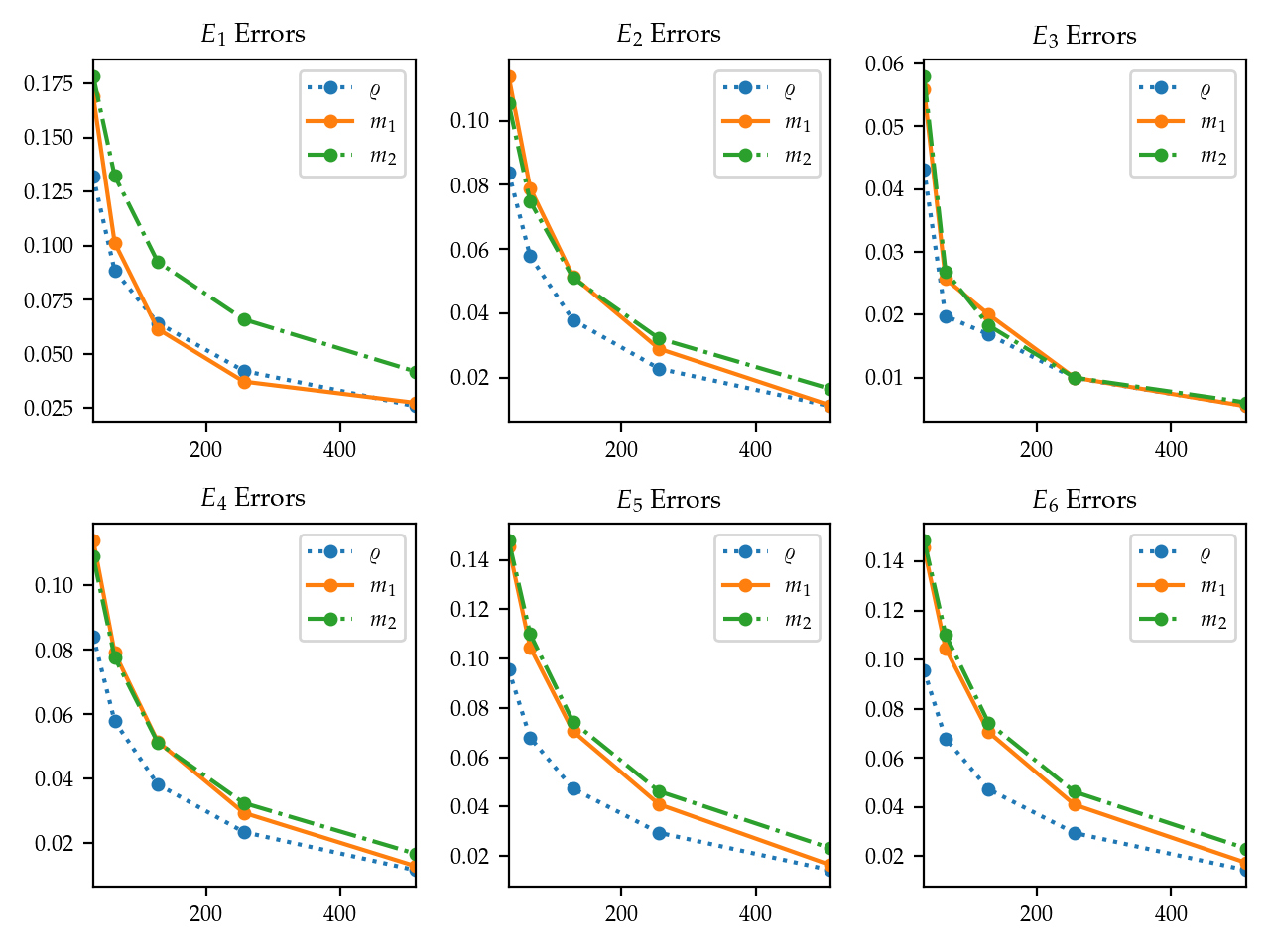}
    \caption{Error profiles of the Kelvin-Helmholtz problem.}
    \label{fig:kel-helm-err}
\end{figure}

\begin{table}[htpb]
    \centering
    \begin{tabular}{|c|c|c|c|c|c|}
          \hline
          $k$ & 32 & 64 & 128 & 256 & 512 \\
          \hline
          Relative Entropy & 0.0463 & 0.0245 & 0.0123 & 0.0059 & 0.0028 \\
          \hline
    \end{tabular}
    \caption{Relative entropy values for the Kelvin-Helmholtz problem for different mesh sizes.}
    \label{tab:rel-ent-kel-helm}
\end{table}

\subsection{\texorpdfstring{$\delta$}{\textdelta}-Shockwave}
\label{subsec:del-shock}

We consider the following $1D$ Riemann problem from \cite{CL03, Jeg18}, where the domain is set as $\lbrack -1,1\rbrack$ and the initial states are given by
\begin{equation*}
    (\vrho, u)(0,x) = 
    \begin{dcases}
        (1, 1.5), &\text{if }x<0, \\
        (0.2, 0), &\text{if }x>0.
    \end{dcases}
\end{equation*}
The pressure law is taken as $p(\vrho) = \kappa^2\vrho^{1.4},\ \kappa>0$. It has been shown in \cite{CL03} that as $\kappa\to 0$, the isentropic Euler system with the above pressure law reduces to the zero pressure equations of sticky particles. The zero pressure model admits vacuum states and $\delta$-shockwaves due to coincident and resonant characteristic fields. Consequently, any two-shock Riemann solution tends to a $\delta$-shock of the zero pressure Euler equations and the density between the two shocks tends to a weighted Dirac-measure which in turn forms the $\delta$-shock; see \cite{CL03} for more details. We compute the numerical solutions for $\kappa = 1, 10^{-2}, 10^{-3}, 10^{-5}$, with final time $T=0.2$ and $K = 2048$.

For the sake of brevity, we present the plots of the density when $k = 1024$ , for the cases of $\kappa = 1$ and $\kappa = 10^{-5}$ in Figure \ref{fig:del_shock_den}. Clearly, we see the formation of classical shock wave corresponding to $\kappa=1$, whereas the figure for $\kappa=10^{-5}$ indicates the formation of vacuum states and a $\delta$-shock. We did not observe any instability or the breaking down since the present scheme is positivity preserving. Furthermore, the proposed scheme is able to effectively capture non-classical $\delta$-shock type discontinuities. 

In Figures \ref{fig:err-kappa-1}, \ref{fig:err-kappa-10^-2}, \ref{fig:err-kappa-10^-3} and \ref{fig:err-kappa-10^-5}, we present the error plots of the density $\vrho$ and the momentum $m$, for $\kappa=1, 10^{-2}, 10^{-3}$ and $10^{-5}$ respectively. For each $\kappa$, all the errors decrease upon refining the mesh, indicating that the limit of the numerical solutions for each case is a weak solution of the Euler system. Therefore, we also compute the $L^1$-norms of the relative entropy in each case to check whether convergence towards a strong solution of the system is achieved or not. The results are presented in Figure \ref{fig:rel-ent-del-shock} and we observe that for $\kappa = 1$, the limit is indeed a strong solution of the Euler system, cf.\ Remark \ref{rem:rel-ent} and Theorem \ref{thm:wk-str-uniq}. For the cases of $\kappa = 10^{-2}, 10^{-3}$ and $10^{-5}$, it seems that we have no convergence towards a strong solution due to the disordered nature of the plots, which is in line with the theoretical expectations. The maximum value of $\eta$ observed was $\eta\sim 7.5,\,93.05,\,98.18,\,98.29$ for $\kappa = 1,\,10^{-2},\,10^{-3},\,10^{-5}$ respectively.

\begin{figure}[htpb]
    \centering
        \includegraphics[width = 0.4\textwidth]{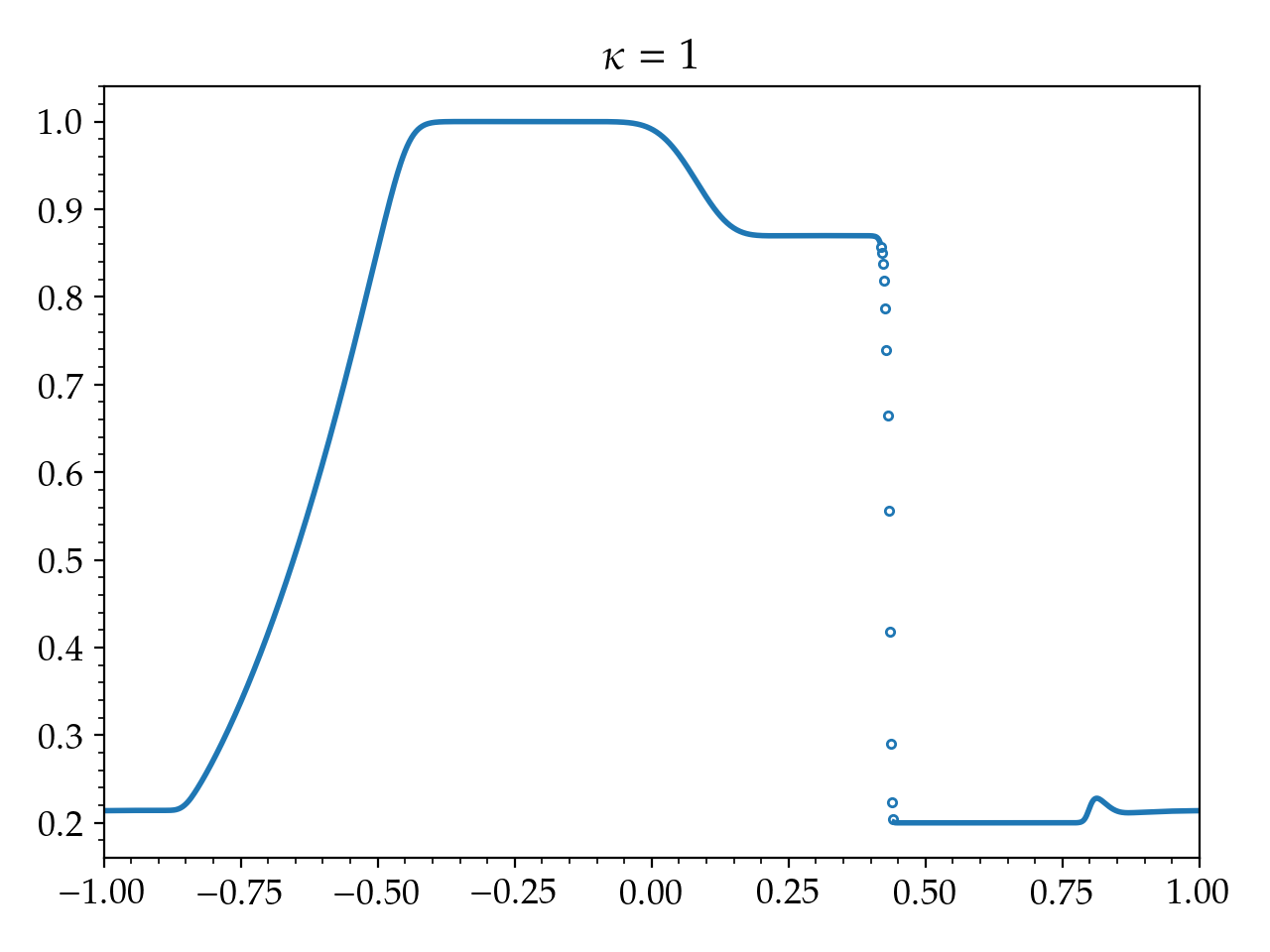}
        \includegraphics[width = 0.4\textwidth]{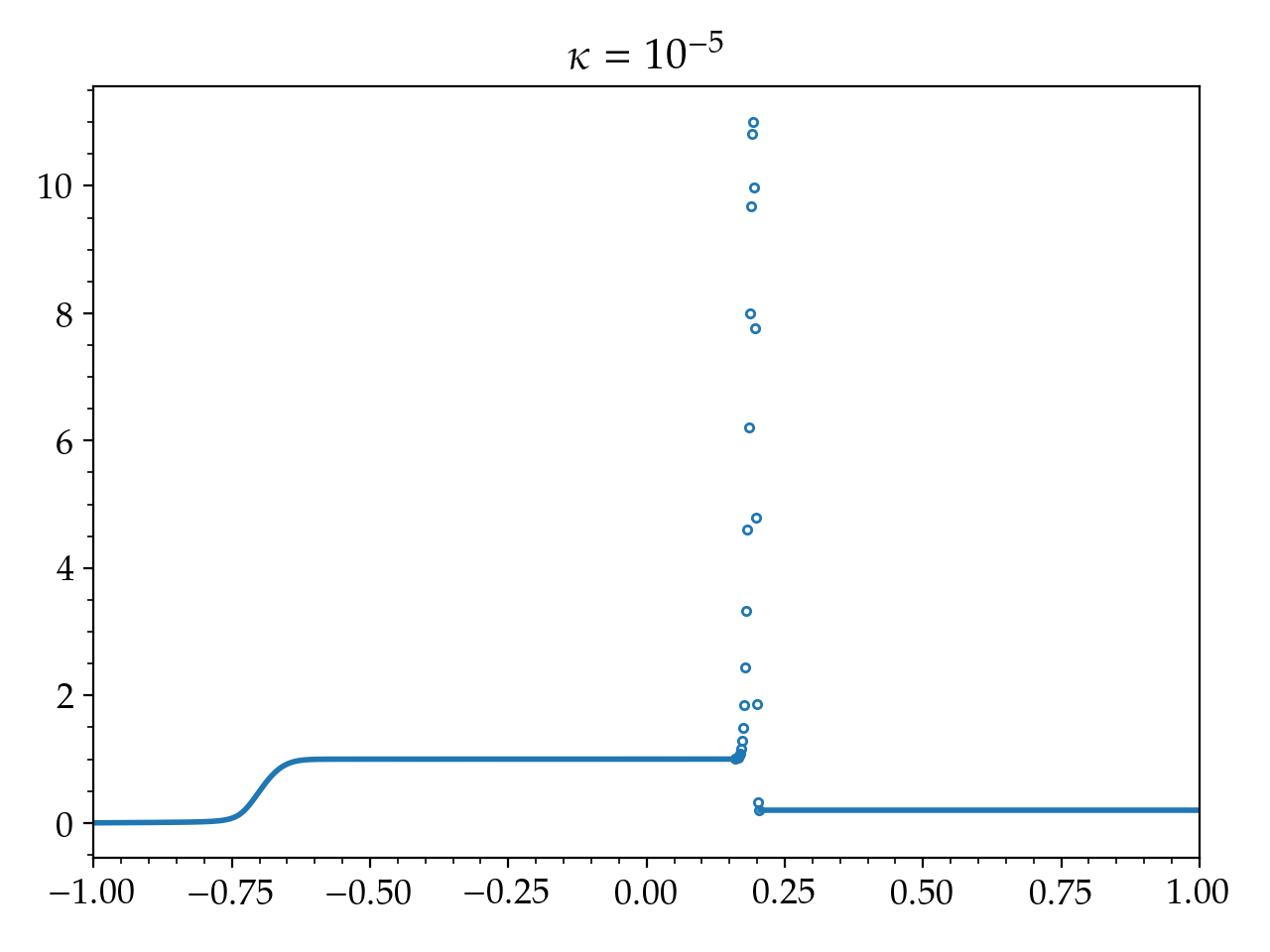}
    \caption{Density profiles of the $\delta$-shockwave problem for $k = 1024$.}
    \label{fig:del_shock_den}
\end{figure}

\begin{figure}[htpb]
    \centering
    \includegraphics[height = 0.45\textheight]{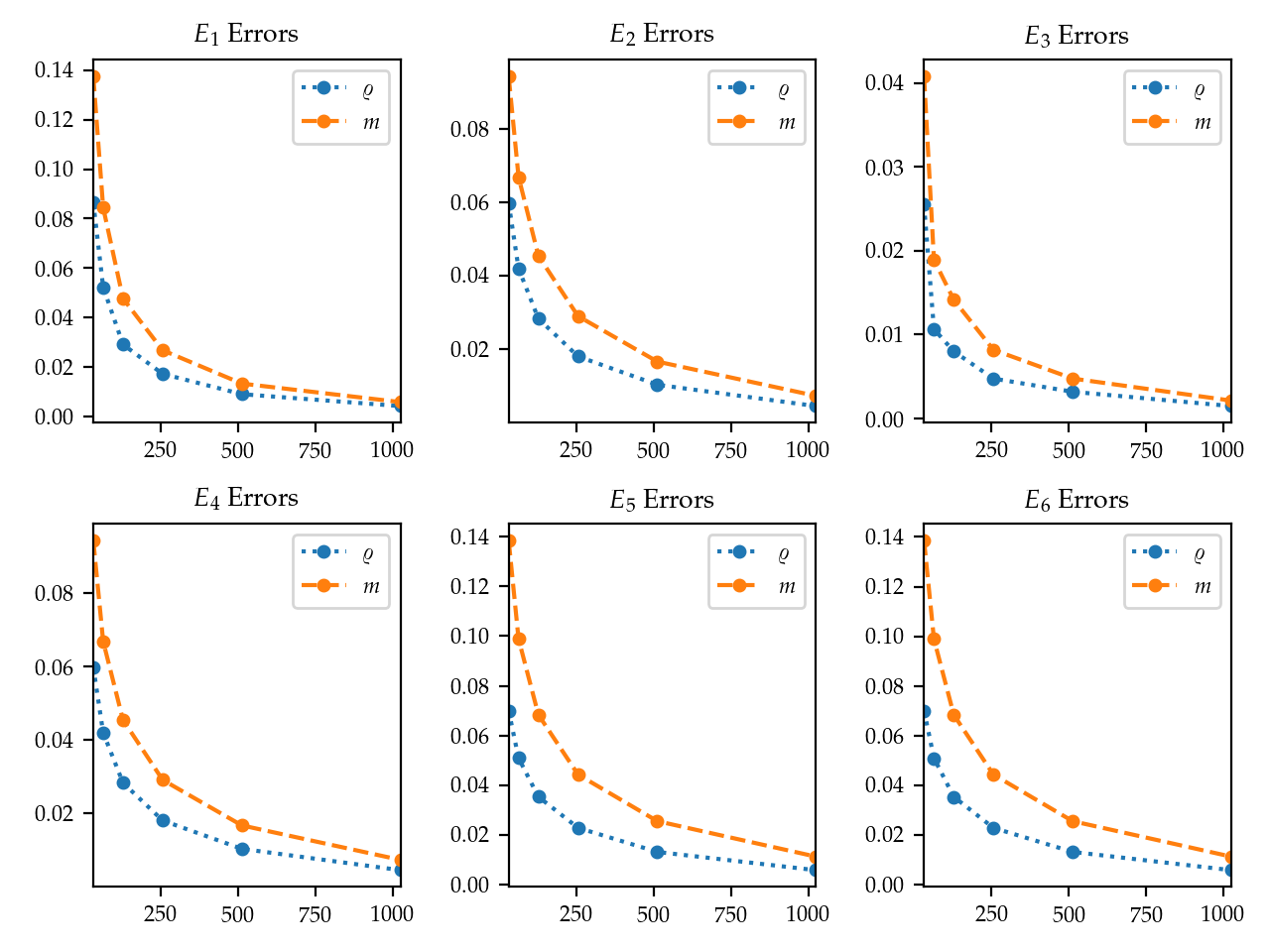}
    \caption{Convergence analysis for $\kappa = 1$.}
    \label{fig:err-kappa-1}
\end{figure}

\begin{figure}[htpb]
    \centering
    \includegraphics[height = 0.45\textheight]{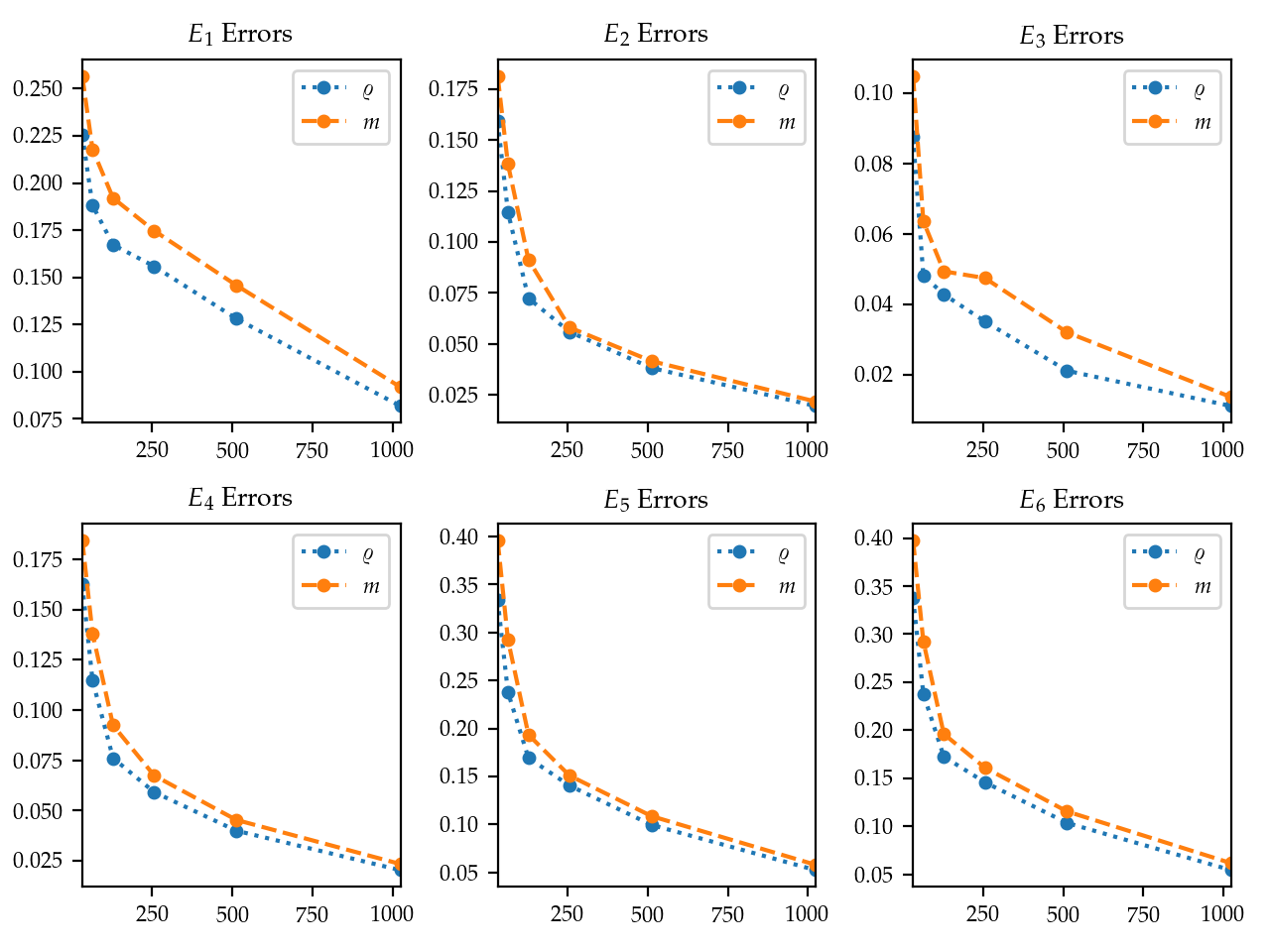}
    \caption{Convergence analysis for $\kappa = 10^{-2}$.}
    \label{fig:err-kappa-10^-2}
\end{figure}

\begin{figure}[htpb]
    \centering
    \includegraphics[height = 0.45\textheight]{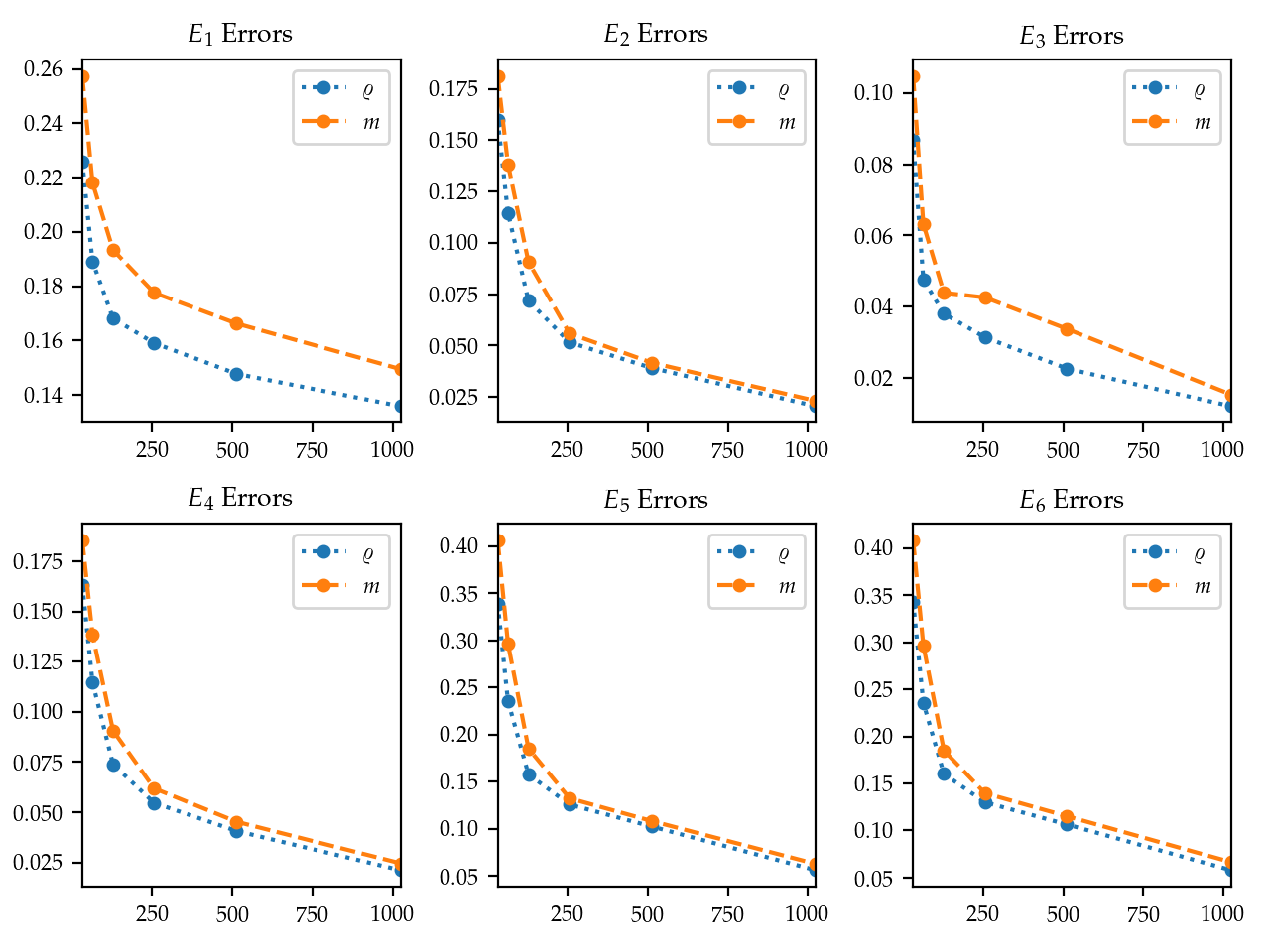}
    \caption{Convergence analysis for $\kappa = 10^{-3}$.}
    \label{fig:err-kappa-10^-3}
\end{figure}

\begin{figure}[htpb]
    \centering
    \includegraphics[height = 0.45\textheight]{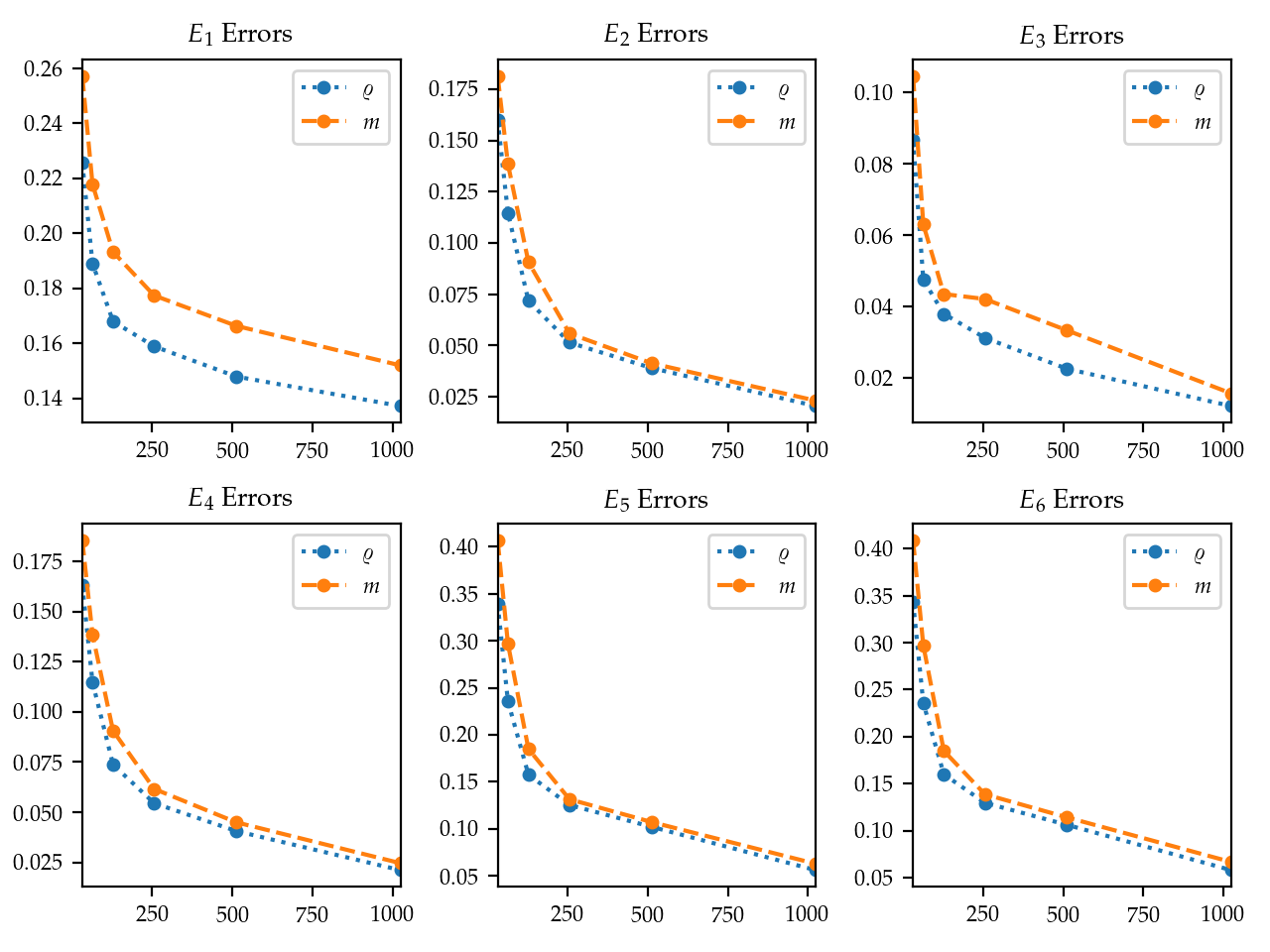}
    \caption{Convergence analysis for $\kappa = 10^{-5}$.}
    \label{fig:err-kappa-10^-5}
\end{figure}

\begin{figure}[htpb]
    \centering
    \includegraphics[height = 0.45\textheight]{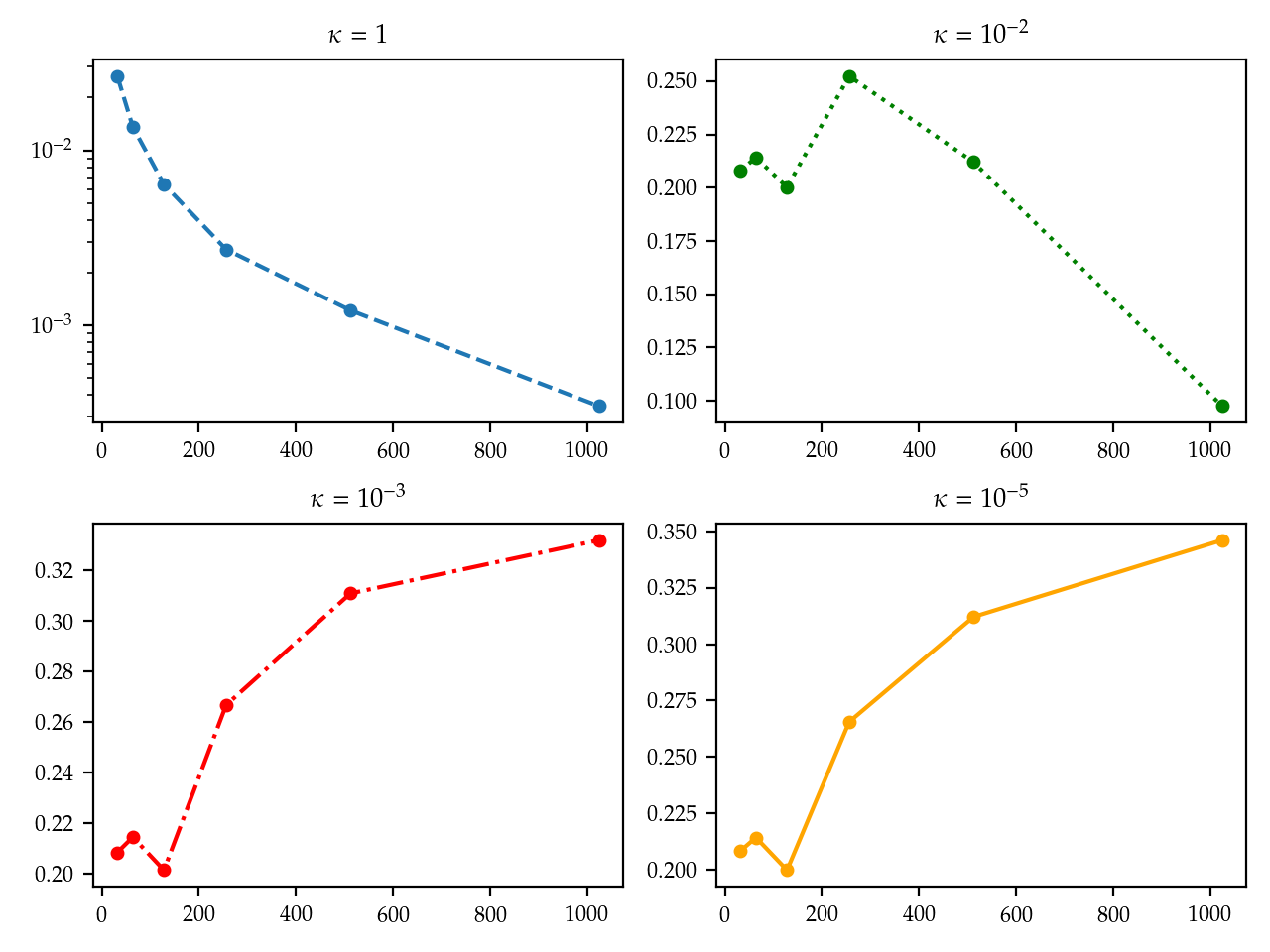}
    \caption{Relative entropy plots for the $\delta$-shockwave problem for different values of $\kappa$.}
    \label{fig:rel-ent-del-shock}
\end{figure}

\begin{remark}
    We note that in the above problem, corresponding to $\kappa=10^{-5}$, we observe the violation of theoretical estimates assumed in Theorem~\ref{thm:cons-scheme}, i.e.\ related to the boundedness of the numerical density. This indicates that the present scheme is robust and that it can perform in the degenerate cases where vacuum is formed and the density is unbounded above.    
\end{remark}

\section{Concluding Remarks}

A semi-implicit in time, entropy stable finite volume scheme for the barotropic Euler system is designed, analyzed and implemented. The entropy stability is achieved  via the introduction of a shifted velocity proportional to the pressure gradient in the convective fluxes of the mass and momentum balances, under the assumption of a CFL-like condition to ensure stability. The proposed scheme possesses certain key physical features, including the positivity of the density along with the aforementioned entropy stability, classifying the proposed scheme as an `invariant-domain' or a `structure-preserving' method. It is rigorously shown that the numerical solutions generated by the scheme are consistent with the weak formulation of the barotropic Euler system modulo some perturbation terms that vanish in the limit, under some physically reasonable boundedness assumptions. The weak convergence of the numerical solutions to a DMV solution then follows, and further results pertaining to the strong convergence of the Ces\`{a}ro averages of the numerical solutions are deduced using the tool of $\K$-convergence. The results of the numerical case studies clearly substantiate the claims made, wherein we exhibit the convergence of the scheme to a DMV solution, a strong solution and a weak solution of the Euler system. Further, the numerical results also highlight the fact that the scheme is robust, it can perform even in degenerate cases where vacuum is formed and density is unbounded above and it can also capture non-classical shocks such as $\delta$-shocks. As far as the authors are aware, this is the first result in literature where a semi-implicit setup is used in designing a scheme that generates DMV solutions of the Euler system.

\section*{Acknowledgements}
The authors thank Mainak Kar for valuable discussions on the topic and his help in numerical implementation, and the anonymous reviewers whose comments have helped improve the manuscript.

\section*{Appendix}
\label{sec:appdx}
In this section, we present the key steps that yield the discrete variants of the renormalization identity and the kinetic energy identity, namely the equations \eqref{eqn:disc-renorm} and \eqref{eqn:disc-kin} in Theorem \ref{thm:disc-enrg-est}. 

\subsection*{A1:\ Discrete Renormalization Identity}
\label{subsec:A1}

Multiply the discrete mass balance \eqref{eqn:disc-mss-bal} with $\psig^\prime(\vrho^{n+1}_K)$ to yield $\mathcal{A}_1 + \mathcal{A}_2 = 0$, where 
\begin{align*}
    &\mathcal{A}_1 = \frac{\psig^\prime(\rk^{n+1})}{\delt}(\rk^{n+1} - \rk^n), \\
    &\mathcal{A}_2 = \frac{1}{\absk}\sum_{\substack{\sink \\ \sigma = K\vert L}} \psig^\prime(\rk^{n+1})\flx(\vrho^{n+1}, \bv^n).
\end{align*}

To simplify $\mathcal{A}_1$, we employ a Taylor expansion to get 
\begin{equation}
\label{eqn:app_1}
    \mathcal{A}_1 = \frac{1}{\delt}(\psig(\rk^{n+1}) - \psig(\rk^n)) + \frac{1}{2\delt}(\rk^{n+1} - \rk^n)^2\psig^{\prime\prime}(\overline{\vrho}^{n+1/2}_K),
\end{equation}
where $\overline{\vrho}_K^{n+1/2}\in\llbracket \rk^n,\rk^{n+1}\rrbracket$.

Next, using the relation $\vrho^{n+1}_K\psig^\prime(\rk^{n+1}) - p^{n+1}_K = \psig(\rk^{n+1})$, cf.\ \eqref{eqn:pres-pot-de}, enables us to write 
\begin{equation}
\label{eqn:app_2}
    \begin{split}
        \psig^\prime(\rk^{n+1})\flx(\vrho^{n+1}, \bv^n) &= \abssig \Bigl(p^{n+1}_K(v^{n}_{\sk})^{+} + \psig(\rk^{n+1})(v^{n}_{\sk})^{+} + \psig^\prime(\vrho^{n+1}_K)(\vrho^{n+1}_L - \rk^{n+1})(v^n_{\sk})^{-} \\
        &+ \rk^{n+1}\psig^{\prime}(\vrho^{n+1}_K)(v^n_{\sk})^{-}\Bigr).
    \end{split}
\end{equation}

Again, employing a Taylor expansion to rewrite the second term in \eqref{eqn:app_2} and simplifying the resulting expression yields the following form for $\mathcal{A}_2$: 

\begin{equation}
\label{eqn:app_3}
    \mathcal{A}_2 = (\divup(\psig(\vrho^{n+1}), \bv^n))_K + p^{n+1}_K(\divt\bv^n)_K + \frac{1}{2\absk}\sum_{\substack{\sink \\ K\vert L}}\abssig(-v^n_{\sk})^{-}(\vrho^{n+1}_L - \vrho^{n+1}_K)^2\psig^{\prime\prime}(\tilde{\vrho}^{n+1}_\sigma),
\end{equation}
where $\tilde{\vrho}^{n+1}_\sigma \in \llbracket\rk^{n+1}, \vrho^{n+1}_L\rrbracket$. Finally, summing up \eqref{eqn:app_1} and \eqref{eqn:app_3} yields the renormalization identity \eqref{eqn:disc-renorm}.

\subsection*{A2:\ Discrete Kinetic Energy Identity}
\label{subsec:A2}

We consider the velocity update \eqref{eqn:disc-vel-upd} which reads
\[
\frac{1}{\delt}(\uk^{n+1}-\uk^n)+\frac{1}{\absk}\sum_{\substack{\sink \\ \sigma=K\vert L}}\abssig\Biggl(\frac{\bu^n_L-\uk^n}{\rk^{n+1}}\Biggr) \flx^{n+1,-}+\frac{1}{\rk^{n+1}}(\gradt p^{n+1})_K=0.
\]

Taking the dot product of the above equation with $\rk^{n+1}\uk^n$ and using the algebraic identity $(a-b)\cdot b = (\abs{a}^2 - \abs{b}^2 - \abs{a-b}^2)/2$ in the first and second terms yields 
\begin{equation}
\label{eqn:app_4}
    \begin{split}
        \frac{\rk^{n+1}}{\delt}&\biggl(\half\abs{\uk^{n+1}}^2 - \half\abs{\uk^n}^2 - \half\abs{\uk^{n+1} - \uk^n}^2\biggr) \\
        &+ \frac{1}{\absk}\sum_{\substack{\sink \\ \sigma = K\vert L}}\abssig\biggl(\half\abs{\bu^n_L}^2 - \half\abs{\uk^n}^2 - \half\abs{\bu^n_L - \uk^n}^2\biggr)F^{n+1, -}_{\sk} + (\gradt p^{n+1})_K\cdot\bv_K^n = -\eta\delt\abs{(\gradt p^{n+1})_K}^2.
    \end{split}
\end{equation}
Here, we have used $\uk^n = \bv^n_K + \eta\delt(\gradt p^{n+1})_K$ to rewrite the pressure gradient term.
Multiplying the discrete mass balance \eqref{eqn:disc-mss-bal} by $\abs{\uk^n}^2/2$, summing it up with \eqref{eqn:app_4} and simplifying the resulting expression gives us the kinetic energy identity \eqref{eqn:disc-kin}.

\bibliographystyle{abbrv}
\bibliography{ref}
\end{document}